\definecolor{NTNUblue}{RGB}{0,80,158}
\definecolor{NTNUbluesupport}{RGB}{62,98,138}
\definecolor{NTNUorange}{RGB}{239,129,20}
\newcommand*\circled[1]{\tikz[baseline=(char.base)]{
    \node[shape=circle,draw,inner sep=2pt] (char) {#1};}}
\newcommand{\bi}{\begin{itemize}}
\newcommand{\ei}{\end{itemize}}
\newcommand{\bn}{\begin{enumerate}}
\newcommand{\en}{\end{enumerate}}
\newcommand{\bq}{\begin{equation}}
\newcommand{\eq}{\end{equation}}
\newcommand{\ba}{\begin{align}}
\newcommand{\ea}{\end{align}}
\newcommand{\bas}{\begin{align*}}
\newcommand{\eas}{\end{align*}}
\newcommand{\bs}{\begin{split}}
\newcommand{\es}{\end{split}}
\newcommand{\RP}{{\mathbb{RP}}}
\newcommand{\R}{{\mathbb{R}}}
\newcommand{\Z}{{\mathbb{Z}}}
\newcommand{\cl}{\mathrm{cl}}
\newcommand{\modu}[3]{#1 \equiv #2 \; (\mathrm{mod} \; #3)}
\newcommand{\fmodu}[3]{\mathbf{#1 \equiv #2} \; (\mathrm{\textbf{mod}} \; \mathbf{#3)}}
\newcommand{\nmodu}[3]{#1 \not\equiv #2 \; (\mathrm{mod} \; #3)}
\newcommand{\Gr}[2]{\widetilde{\mathrm{Gr}_{#1}}(\mathbb{R}^{#2})}
\newcommand{\flip}{\varepsilon}
\newcommand{\Span}[1]{\mathrm{Span\{{#1}\}}}
\newcommand{\Hfree}[2]{H_\text{free}^{#1}(#2;\mathbb{Z})}
\newcommand{\Imm}{\mathrm{Im}\,}
\newcommand{\Ker}{\mathrm{Ker}\,}
\newcommand{\Spin}{\mathrm{Spin}}
\newcommand{\Sq}{\mathrm{Sq}}
\newcommand{\into}{\hookrightarrow}
\newcommand{\rz}{\mathbb{R}/\mathbb{Z}}
\newcommand{\trz}{\tau_{\rz}}
\newcommand{\btrz}{\bar{\tau}_{\rz}}
\newtheorem{theorem}{Theorem}[section]
\newtheorem{lemma}[theorem]{Lemma}
\newtheorem{prop}[theorem]{Proposition}
\newtheorem{cor}[theorem]{Corollary}
\newtheorem*{theorem*}{Theorem}
\theoremstyle{definition}
\newtheorem{defn}[theorem]{Definition}
\newtheorem{remark}[theorem]{Remark}
\begin{document}

\author{Eiolf Kaspersen}
\address{Department of Mathematical Sciences, NTNU, NO-7491 Trondheim, Norway}
\email{eiolf.kaspersen@ntnu.no}

\author{Gereon Quick} 
\address{Department of Mathematical Sciences, NTNU, NO-7491 Trondheim, Norway}
\email{gereon.quick@ntnu.no}

\title{On the cokernel of the Thom morphism for compact Lie groups}

\date{}

\begin{abstract}
We give a complete description of the potential failure of the surjectivity of the Thom morphism from complex cobordism to integral cohomology for compact Lie groups via a detailed study of the  Atiyah--Hirzebruch spectral sequence and the action of the Steenrod algebra. 
We show how the failure of the surjectivity of the topological Thom morphism can be used to find examples of non-trivial elements in the kernel of the induced differential Thom morphism from differential cobordism of Hopkins and Singer to differential cohomology. 
These arguments are based on the particular algebraic structure and interplay of the torsion and non-torsion parts of the cohomology and cobordism rings of a given compact Lie group. 
We then use the geometry of special orthogonal groups to construct concrete cobordism classes in the non-trivial part of the kernel of the differential Thom morphism.  
\end{abstract}
\subjclass{\rm 57R77, 57T10, 55N22, 55S05, 55S10} 

\maketitle

\setcounter{tocdepth}{3}

\tableofcontents

\section{Introduction}

The Thom morphism 
\[
\tau \colon MU \longrightarrow H\Z
\]
from complex cobordism to integral singular cohomology is of fundamental importance for the study of the stable homotopy category. 
A special feature of $\tau$ is that it encodes both deep  algebraic and geometric structures. 
This is a common theme of the present paper and 
is reflected in the following two ways $\tau$ may be described.  
On the one hand, $\tau$ interpolates between two extreme ends of the spectrum of oriented cohomology theories which may be classified by their formal group laws, as $\tau$ corresponds the unique morphism from the universal formal group law to the additive one (see \cite[II Example (4.7)]{adams}). 
On the other hand, $\tau$ may be described geometrically in the following way.  
Let $X$ be a smooth manifold. 
By Quillen's work in \cite{quillen}, classes in $MU^*(X)$ can be represented by proper complex-oriented maps $g \colon M \to X$.   
The Thom morphism sends the class $[g]$ to $g_*[M]$ where $[M]$ denotes the Poincar\'e dual of the fundamental class of $M$. 
Thus, roughly speaking, a cohomology class is in the image of $\tau$ if it can be expressed by a fundamental class of an almost-complex manifold. 
Hence the question whether $\tau$ is surjective or not is directly connected to concrete geometric phenomena, which is also why Thom introduced $\tau$ to solve Steenrod's problem in \cite{thom}. 
In cohomological degrees $i=0,1,2$, the Thom morphism is surjective for all spaces, since the Eilenberg--MacLane spaces $K(\Z,i)$ are torsion-free for $i=0,1,2$. 
In cohomological degrees $i\ge 3$, however, $\tau$ may fail to be surjective, 
even though the coefficient ring of $MU$ is much larger than the one of $H\Z$. 
It is well-known that the Atiyah--Hirzebruch spectral sequence  
\begin{align*}
    E_2^{p,q} = H^p(X;MU^q) \implies MU^{p+q}(X)
\end{align*}
both provides a way to show that $\tau$ may be surjective  
and that its differentials may yield obstructions to the surjectivity of $\tau$ (see \cite{ah}). 
However, the image of the Thom morphism has not been studied for many types of spaces. \\

The purpose of the present paper is to give a complete description of the potential failure of the surjectivity of the Thom morphism for compact connected Lie groups which provide an important class of examples of smooth manifolds. 
Our first main result is the following: 

\begin{theorem}\label{thm:main_intro}
Let $G$ be a compact connected Lie group with simple Lie algebra. 
Then table \eqref{table_intro} shows the minimal cohomological degree $q$ for which the Thom morphism $\tau \colon MU^q(G) \longrightarrow H^q(G;\mathbb{Z})$ fails to be surjective. 
\end{theorem}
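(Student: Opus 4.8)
The plan is to analyse the Atiyah--Hirzebruch spectral sequence $E_2^{p,q}=H^p(G;MU^q)\Longrightarrow MU^{p+q}(G)$ and to decide, for each isogeny type of $G$ and each cohomological degree, which differential, if any, obstructs lifting a class from $H^\ast(G;\Z)$ to $MU^\ast(G)$. Since $MU^q=0$ for $q$ odd and for $q>0$, one has $d_r=0$ for $r$ even and no differential hits the bottom row $E_r^{q,0}$; the edge homomorphism therefore identifies $\mathrm{im}\big(\tau\colon MU^q(G)\to H^q(G;\Z)\big)$ with $E_\infty^{q,0}\subseteq H^q(G;\Z)$, so $\tau$ is surjective in degree $q$ precisely when every differential $d_r\colon E_r^{q,0}\to E_r^{q+r,\,1-r}$ with $r\ge 3$ odd vanishes. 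The table value at $G$ is the least $q$ for which this fails.

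First I would pin down the leading differentials. The bottom one is the stable operation $d_3=\beta\circ\Sq^2\circ\rho\colon H^q(G;\Z)\to H^{q+3}(G;\Z)$, with $\rho$ reduction mod $2$ and $\beta$ the integral Bockstein; it is the nonzero first $k$-invariant of $MU$ over $H\Z$, agreeing with that of $ku$. More generally, at a prime $p$ the leading $p$-local differentials sit in filtration jumps $r=2p^k-1$ and are governed by the Milnor primitives $Q_k$, with $Q_1$ at $p=2$ recovering $\Sq^3$ and hence $d_3$. As $MU_\ast\otimes\Q$ is polynomial, the spectral sequence collapses rationally and every $d_r$ is torsion-valued; in particular $\tau$ is surjective in all degrees whenever $H^\ast(G;\Z)$ is torsion-free, which settles $SU(n)$, $Sp(n)$ and the remaining torsion-free forms. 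For the rest only the primes dividing the torsion of $H^\ast(G;\Z)$ contribute, and in the low degrees at issue it is almost always $d_3$ that is responsible.

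The core of the proof is a case analysis along the classification of compact connected simple Lie groups and their isogeny forms, using the classical descriptions of $H^\ast(G;\Z)$, $H^\ast(G;\F_p)$ and the module structure over the Steenrod algebra: for $SO(n)$ through the $w_i$ and the Wu formula, for $\Spin(n)$ through Quillen's presentation with the extra class coming from the spin representation, and for the exceptional groups through the known transgressive generators. For each $G$ the argument has two halves. First, for every $q$ below the tabulated value, show that all relevant differentials vanish on $H^q(G;\Z)$ --- either because the target carries no $p$-torsion, or by a direct evaluation of $\Sq^2$ resp.\ $P^1$ on reductions, or, for the higher differentials $d_7,d_{15},\dots$, for degree reasons. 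Second, at the critical degree $q_0$, exhibit an explicit class $x\in H^{q_0}(G;\Z)$ --- in practice a low-degree transgressive generator, for instance the torsion-free class $X_3$ dual to $\pi_3(G)$, or a small product or power of such --- for which $d_3x=\beta\Sq^2\rho(x)$ (or, in the rare cases where $d_3$ vanishes identically in degree $q_0$, an odd-primary or higher differential) is nonzero. This last nonvanishing is verified by expanding $\Sq^2\rho(x)$ via the Cartan, Adem and Wu formulae and recognizing the result as a class with nontrivial integral Bockstein, using $\Sq^1=\rho\beta$ and the Bockstein spectral sequence of $G$; it is here that the interplay emphasized in the introduction appears, the obstruction sitting on a torsion-free generator whose image under $\beta\Sq^2\rho$ is a nonzero torsion class.

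The main obstacle is the bookkeeping for the orthogonal and spin families (together with their adjoint and semispinor forms, and $PU(n)$): there $H^\ast(G;\F_p)$ is an intricate algebra over the Steenrod algebra, so both the vanishing below $q_0$ and the single nonvanishing at $q_0$ demand a careful, $n$-dependent analysis of the module structure, and one must additionally rule out that some higher-filtration or odd-primary differential forces a failure before $d_3$ does --- handled using the sparsity of the torsion in low degrees together with the degree shifts $2p^k-1$ of the Milnor primitives. For the exceptional groups the torsion is concentrated in a few degrees and at few primes, so the analysis is more self-contained, but it still rests on explicit knowledge of the Steenrod action on the generators of $H^\ast(G;\F_p)$.
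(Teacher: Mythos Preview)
Your strategy is essentially the paper's: identify $\tau$ with the edge map of the Atiyah--Hirzebruch spectral sequence, dispose of the torsion-free cases, and then do a case-by-case Steenrod-algebra analysis for each isogeny type. Two points, however, deserve sharpening.

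First, the paper does not try to identify the specific differentials $d_3,d_7,\ldots$ on the nose. It uses instead the cleaner criterion that every odd-degree Steenrod operation vanishes on the image of $MU^\ast(X)\to H^\ast(X;\Z/p)$; to show $e\in H^q(G;\Z)$ is not hit it suffices to find \emph{any} odd-degree square or power-operation nonzero on $\rho(e)$. This matters because your emphasis on $d_3$ is misleading: for $PSp(n)$ with $n$ even the obstruction at the minimal degree is $\Sq^{2q-1}$ with $q$ the $2$-adic part of $n$, and for $G(n,l)=SU(n)/\Gamma_l$ it is $\Sq^{2^r-1}$ --- neither is $\beta\Sq^2\rho$. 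The central technical device is not the Wu formula (that belongs to $BSO$, not $SO$) but rather \emph{Bockstein cohomology}: one computes $BH^\ast(G;\Z/p)$ to pin down exactly which mod-$p$ polynomial in the generators the non-torsion class $e_q$ reduces to, and only then applies the Steenrod operation. Without this step you do not know what $\rho(e_q)$ is.

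Second, you underestimate the surjectivity half. For groups with torsion where $\tau$ \emph{is} surjective --- $PSp(n)$ for $n$ odd, $G(n,l)$ for $4\nmid n$ or $l\not\equiv 2\pmod 4$, $PSO(6)$ --- one must show that \emph{every} odd-degree Steenrod composite vanishes on the image of $\rho$, not just a few low ones. The paper does this by structural arguments: for $G(n,l)$ at odd $p$ it is a nontrivial Lucas'-theorem computation showing that the binomial coefficients in $P^k(z_{2i-1})$ are always divisible by $p$ whenever the target would be $z_{2p^{r-1}-1}$; for $PSp(n)$ with $n$ odd one tracks the single $\Sq^1$-nontrivial generator $v$ through the whole ring. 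Your sketch (``either the target has no $p$-torsion, or by direct evaluation, or for degree reasons'') does not cover these cases, and this is where most of the work in the table actually lies.
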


In fact, for each minimal cohomological degree where $\tau$ fails to be surjective, we provide concrete non-torsion classes in $H^k(G;\mathbb{Z})$ which are not in the image of $\tau$. 
The methods to prove theorem \ref{thm:main_intro} are described in sections \ref{sec:surjective_Thom} and \ref{sec:obstructions}, and the study of the individual types of Lie groups occupies section \ref{sec:Lie_groups}. 
We note that generalised cohomology groups for some types of compact Lie groups are well-known, for example for complex $K$-theory from \cite{hodgkin}, for exceptional Lie groups and Morava $K$-theory form \cite{hmns, nishimoto}, and in Brown--Peterson cohomology from \cite{yagita1, yagita2, yagita3}. 
Some of our computations could have been deduced from these papers.  
However, in order to give a unified and self-contained picture we provide direct proofs for all groups we consider.  

\begin{table}[h]\label{table_intro}
\caption{Summary of the results of theorem \ref{thm:main_intro}}
\renewcommand{\arraystretch}{1.5}
\setlength{\tabcolsep}{6pt} 
\begin{tabular}{|c|p{3.5cm}|p{2.7cm}|p{3cm}|} 
\hline
\multicolumn{1}{|l|}{Lie Algebra} & Lie Group                                            & Surjective                           & Min. degree where surjectivity fails \\ \hline
\multirow{2}{*}{$\mathfrak{a_n}$}            & $SU(n)$ - Special unitary group         & yes                                  & --    \\ \cline{2-4}
                                  & $SU(n)/\Gamma_l$ - Quotient of special unitary group & not for $4 \mid n$ and \newline $\modu{l}{2}{4}$,\newline yes otherwise & $2^r - 1$ where $r\in \Z$ is max.\,st.\,$2^r \mid n$    \\ \hline
\multirow{2}{*}{$\mathfrak{c_n}$}            & $Sp(n)$ - Symplectic group             & yes                                  & --    \\ \cline{2-4}
                                  & $PSp(n)$ - Projective symplectic group               & not for $n$ even, \newline yes for $n$ odd                  & $2^{r+1}-1$ where $r\in \Z$ is max.\,st.\,$2^r \mid n$   \\ \hline
\multirow{4}{*}{$\mathfrak{b_n}$, $\mathfrak{d_n}$}     & $Spin(n)$ - Spin group            & not for $n \geq 7$                   & 3   \\ \cline{2-4}
                                  & $SO(n)$ - Special orthogonal group                   & not for $n \geq 5$                   & 3    \\ \cline{2-4}
                                  & $Ss(n)$ - Semi-spin group                            & not for $n \geq 4$                   & $3$ if $8 \mid n$, \newline $7$ otherwise    \\ \cline{2-4}
                                  & $PSO(n)$ - Projective special orthog. group       & not for $n \geq 8$                   & $3$ if $8 \mid n$, \newline $7$ otherwise    \\ \hline
$\mathfrak{g_2}$                             & $G_2$                                                & no                                   & $3$    \\ \hline
$\mathfrak{f_4}$                             & $F_4$                                                & no                                   & $3$   \\ \hline
\multirow{2}{*}{$\mathfrak{e_6}$}            & $E_6$, simply-connected         & no                                   & $3$   \\ \cline{2-4}
                                  & $E_6 / \Gamma_3$, centerless     & no                                   & $3$   \\ \hline
\multirow{2}{*}{$\mathfrak{e_7}$}            & $E_7$, simply-connected                 & no                                   & $3$   \\ \cline{2-4}
                                  & $E_7 / \Gamma_2$, centerless                         & no                                   & $3$   \\ \hline
$\mathfrak{e_8}$                             & $E_8$                                 & no                                   & $3$   \\ \hline
\end{tabular}
\end{table}

\begin{remark}\label{rem:intro_pattern}
We recall in section \ref{sec:surjective_Thom} why $\tau$ is surjective whenever $H^*(G;\Z)$ is torsion-free. 
However, we point out that this argument is not sufficient to explain the cases in table \eqref{table_intro} where $\tau$ is surjective. 
The pattern we observe in  
table \eqref{table_intro} indicates that Lie groups of type $\mathfrak{a_n}$ and $\mathfrak{c_n}$ tend to have a surjective Thom morphism, 
while groups of type $\mathfrak{b_n}$ and $\mathfrak{d_n}$ do not have a 
surjective Thom morphism in sufficiently high dimensions. 
The exceptional Lie groups on the other hand show a clear pattern. 
We note, however, that the behaviors of $E_7$ and $E_8$ are slightly different from the one of the other groups (see section \ref{sec:exceptional_groups}). 
We do not know of a general geometric explanation for why $\tau$ is surjective or not surjective for a given Lie group. 
In section \ref{sec:geometric_examples}, however, we use the geometry and cell structure of special orthogonal groups to construct concrete geometric elements in $MU^*(SO(n))$. 
\end{remark}


\begin{remark}\label{rem:image_of_ktheory_intro}
We note that in the cases where $\tau$ fails to be surjective in cohomological degree $3$, the generator $e_3 \in H^3(G;\Z)$ which is not hit by $\tau$ is not in the image of the homomorphism 
\begin{align*}
ku^3(G) \to H^3(G;\Z) 
\end{align*} 
from connective complex $K$-theory $ku$ either.  
This is due to the fact that the Milnor operation $Q_1$ and the Steenrod operation $\Sq^3$ provide obstructions which are differentials in the Atiyah--Hirzebruch spectral sequence   
\begin{align*}
E_2^{p,q} = H^p(G;ku^q) \implies ku^{p+q}(G).
\end{align*} 
This applies to several of the groups of type $\mathfrak{b_n}$ and $\mathfrak{d_n}$ and to all exceptional Lie groups (see table \eqref{table_intro} for the specific groups). 
In the other cases, however, surjectivity may not fail for $ku$ but only on a higher stage in the tower of cohomology theories $MU \to \cdots \to MU\langle 2 \rangle \to MU\langle 1 \rangle = ku \to MU\langle 0 \rangle = H\Z$. 
\end{remark}

 
A concrete motivation for our study of the Thom morphism arises from the theory of generalised differential cohomology theories for smooth manifolds developed by  Hopkins and Singer in \cite{hs}. 
For a rationally even spectrum $E$ and a smooth manifold $X$, the differential $E$-cohomology groups are denoted by $\check{E}(q)^n(X)$. 
The most interesting choice of degrees is $n=q$. 
The group $\check{E}(q)^q(X)$ then sits in several short exact sequences as described in \cite[diagram (4.57)]{hs}. 
In particular, the natural homomorphism $\check{E}(q)^q(X) \to E^q(X)$ is surjective. 
Hence the Thom morphism $\tau \colon MU \to H\Z$ induces a commutative diagram 
\begin{align*}
\xymatrix{
\check{MU}(q)^q(X) \ar[r] \ar[d]_-{\check{\tau}} & MU^q(X) \ar[d]^-{\tau} \\
\check{H}(q)^q(X) \ar[r] & H^q(X;\Z)
}    
\end{align*}
in which the horizontal maps are surjective. 
Thus, if $\tau$ is not surjective, then $\check{\tau}$ fails to be surjective as well. 
We note that Grady and Sati study in \cite{gradysatiAHSS} the surjectivity of the differential analog of the map from complex $K$-theory to cohomology using a differential version of the Atiyah--Hirzebruch spectral sequence. \\ 

However, the failure of the surjectivity of $\tau$ also allows us to find non-trivial elements in the kernel of $\check{\tau}$. 
For every rationally even spectrum $E$, $\check{E}(q)^q(X)$ sits in a short exact sequence of the form 
\begin{align*}
0 \to E^{q-1}(X)\otimes \rz \to \check{E}(q)^q(X) \to A_{E}^q(X) \to 0
\end{align*}
where the group $A_{E}^q(X)$ is defined by the following pullback square, in which $\Omega^*(X;\pi_*E\otimes \R)^q_{\cl}$ denotes closed forms on $X$ of total degree $q$:  
\begin{align*}
\xymatrix{
A_{E}^q(X) \ar[r] \ar[d] & \Omega^*(X;\pi_*E\otimes \R)^q_{\cl} \ar[d] \\
E^q(X) \ar[r] & H^q(X;\pi_*E\otimes \R).
}    
\end{align*}
%
The Thom morphism $\tau \colon MU \to H\Z$ induces a map of short exact sequences
\begin{align*}
\xymatrix{
0 \ar[r] & MU^{q-1}(X)\otimes_{\Z} \rz \ar[d]^-{\trz} \ar[r] & \check{MU}(q)^q(X) \ar[d]^-{\check{\tau}} \ar[r] & A_{MU}^q(X) \ar[d]^-{\tau_A} \ar[r] & 0 \\
0 \ar[r] & H^{q-1}(X;\Z)\otimes_{\Z} \rz \ar[r] & \check{H}(q)^q(X) \ar[r] & A_{H}^q(X) \ar[r] & 0. 
}    
\end{align*}
\begin{sloppypar}
Recall that the kernel of the Thom morphism always contains the ideal ${MU^{*<0}\cdot MU^*(M)}$ of $MU^*(X)$, since $\tau$ is a natural transformation of oriented cohomology theories. 
We therefore use the following terminology:\end{sloppypar} 
 
\begin{defn}\label{def:non_trivial_kernel}
We say that an element in the kernel of $\trz$ or $\check{\tau}$ is \emph{non-trivial} if it is not contained in the respective ideal generated by $MU^{*<0}$. 
\end{defn}

We will explain in section \ref{sec:differential_kernel} how the failure of $\tau$ to be surjective enables us to find non-trivial elements in the kernel of $\trz$. 
This leads to the following result, for which we emphasise that the assumption applies to a large class of compact Lie groups by theorem \ref{thm:main_intro}: 

\begin{theorem}\label{thm:kernel_diff_cob_intro}
Let $G$ be a compact Lie group $G$ and $q$ an integer such that the Thom morphism $\tau \colon MU^{q-1}(G) \longrightarrow H^{q-1}(G;\mathbb{Z})$ fails to be surjective on a non-torsion class. 
Then the kernel of the differential Thom morphism 
\begin{align*}
\check{\tau} \colon \check{MU}(q)^q(G) \to \check{H}(q)^q(G) 
\end{align*}
is non-trivial in the sense of definition \ref{def:non_trivial_kernel}.  
\end{theorem}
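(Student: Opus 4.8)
The plan is to exploit the map of short exact sequences displayed just before Definition \ref{def:non_trivial_kernel}. The left vertical map there is $\trz = \tau \otimes \id_{\rz} \colon MU^{q-1}(G)\otimes_\Z \rz \to H^{q-1}(G;\Z)\otimes_\Z \rz$. Since $\check\tau$ restricts to $\trz$ on the left-hand subgroups, any element of the kernel of $\trz$ gives an element of the kernel of $\check\tau$, and the notion of non-triviality in Definition \ref{def:non_trivial_kernel} is compatible with this inclusion (the ideal generated by $MU^{*<0}$ in $MU^{q-1}(G)\otimes \rz$ maps into the corresponding ideal in $\check{MU}(q)^q(G)$). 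Hence it suffices to produce a non-trivial element in the kernel of $\trz$.

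First I would use the hypothesis: there is a non-torsion class $c \in H^{q-1}(G;\Z)$ not in the image of $\tau \colon MU^{q-1}(G) \to H^{q-1}(G;\Z)$. Tensoring the cohomology groups with $\rz = \R/\Z$, the key point is that the image of $\trz$ is contained in the divisible subgroup $\Imm(\tau)\otimes \rz \subseteq H^{q-1}(G;\Z)\otimes \rz$, so any class of $H^{q-1}(G;\Z)\otimes\rz$ lying outside $\Imm(\tau)\otimes\rz$ detects failure of surjectivity of $\trz$. To turn this into a kernel statement I would pick a prime $p$ (or work rationally) for which $c$ remains non-zero and non-torsion, choose a $\Z_{(p)}$-splitting of the non-torsion part, and consider the element $c \otimes \tfrac{1}{p} \in H^{q-1}(G;\Z)\otimes \rz$: because $c$ is non-torsion and independent of $\Imm(\tau)$ in a suitable localization, $c \otimes \tfrac1p$ is not in $\Imm(\trz)$, yet $p\cdot(c\otimes\tfrac1p) = 0$. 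Pulling back a preimage question along $\trz$ — more precisely, lifting $c$ itself (non-torsion, hence of infinite order) and dividing — produces, after a diagram chase in the commutative square
\begin{align*}
\xymatrix{
MU^{q-1}(G)\otimes\rz \ar[r]^-{\trz} \ar[d] & H^{q-1}(G;\Z)\otimes\rz \ar[d] \\
MU^{q-1}(G)\otimes\Q \ar[r] & H^{q-1}(G;\Z)\otimes\Q,
}
\end{align*}
a class $\kappa \in MU^{q-1}(G)\otimes\rz$ with $\trz(\kappa)=0$ but $\kappa \neq 0$. Concretely, one takes a non-torsion $x \in MU^{q-1}(G)\otimes\Q$ with $\tau_\Q(x)$ a rational multiple of $c$ not realised integrally, clears denominators to land in $MU^{q-1}(G)\otimes\rz$, and subtracts off the integral part, using that $\tau$ is rationally surjective (its rational behaviour is governed by the formal-group-law description from the introduction) while integrally $c$ is missed.

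Next I must check that the resulting $\kappa$ is \emph{non-trivial} in the sense of Definition \ref{def:non_trivial_kernel}, i.e.\ not in the ideal of $MU^{q-1}(G)\otimes\rz$ generated by $MU^{*<0}$. This is where the structure of $\tau$ as a natural transformation of oriented theories is used in reverse: $\tau$ kills that whole ideal, so any element of it maps to $0$ under $\tau$ and hence under $\trz$; but the element $\kappa$ we built projects, under the rational comparison, to a nonzero class whose image in $H^{q-1}(G;\Z)\otimes\Q$ is the nonzero rational multiple of $c$ — and if $\kappa$ lay in the negative-coefficient ideal, its rationalisation would lie in the negative-coefficient ideal of $MU^{q-1}(G)\otimes\Q$, which maps to $0$ in $H^{q-1}(G;\Q)$, contradicting that it hits a nonzero multiple of $c$. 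So the detection of non-triviality is the same mechanism that detected the kernel: the non-torsion class $c$ survives rationalisation and is not in the negative-coefficient part.

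The main obstacle, and the step deserving the most care, is the bookkeeping in passing from ``$c$ is not in the integral image of $\tau$'' to ``$c\otimes(\text{torsion unit})$ is not in the image of $\trz$ but is killed''; this requires knowing that $MU^{q-1}(G)$ has enough non-torsion — equivalently that $\tau_\Q$ is surjective onto $H^{q-1}(G;\Q)$ so that the only obstruction to hitting $c$ is integral, not rational — and then arranging the division by a suitable integer so that the class becomes torsion in $\rz$-coefficients while remaining outside the image. Here I would invoke that $H^*(G;\Q)$ is generated by non-torsion classes and that $MU^*(G)\otimes\Q \to H^*(G;\Q)$ is surjective (the rational Thom morphism splits), so that $c$ lifts rationally; the integer one divides by is essentially the index witnessing that no integral lift exists, whose existence is exactly the hypothesis. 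Once that integer is fixed, the remaining verifications are the routine diagram chases sketched above.
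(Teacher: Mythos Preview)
Your overall strategy coincides with the paper's: reduce to the left-hand map $\trz$ in the short exact sequence, produce an integer $n>1$ and a class $\mu\in MU^{q-1}(G)$ with $\tau(\mu)=n\alpha$ for the missed non-torsion class $\alpha$, and take $\mu\otimes\tfrac{1}{n}$ as the kernel element. Your use of rational surjectivity of $\tau$ to obtain such an $n$ is a legitimate alternative to the paper's argument in section~\ref{sec:detecting_kernel}, which instead observes that the Atiyah--Hirzebruch differentials are torsion and bounded in number, so some multiple of $\alpha$ survives to $E_\infty^{q-1,0}$. Both arguments yield the same $\mu$; the paper's version is more explicit about which $n$ works.

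The genuine gap is in your non-triviality argument. You write that if $\kappa=\mu\otimes\tfrac{1}{n}$ lay in the ideal generated by $MU^{*<0}$, then ``its rationalisation would lie in the negative-coefficient ideal of $MU^{q-1}(G)\otimes\Q$'' and hence map to zero in $H^{q-1}(G;\Q)$, contradicting that it hits a nonzero multiple of $c$. But there is no rationalisation map $MU^{q-1}(G)\otimes_{\Z}\rz\to MU^{q-1}(G)\otimes_{\Z}\Q$: the group $\rz$ does not map to $\Q$, and the element $\kappa$ does not remember the rational lift $\tfrac{\mu}{n}$ (for instance $\mu\otimes\tfrac{1}{n}=(\mu+n\nu)\otimes\tfrac{1}{n}$ for any $\nu$). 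Moreover $\trz(\kappa)=0$ by construction, so there is nothing nonzero on the cohomology side to contradict.

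The paper's argument (lemma~\ref{lem:detecting_nontrivial_elements_in _kernel}) is instead purely integral: if $\mu\otimes\tfrac{1}{n}$ vanishes in $MU^{q-1}(G)\otimes_{MU^*}\rz$, then the image of $\mu$ in $MU^{q-1}(G)\otimes_{MU^*}\Z$ is divisible by $n$, say equal to $n\bar\gamma$; lifting $\bar\gamma$ to $\gamma\in MU^{q-1}(G)$ and applying $\tau$ (which kills the $MU^{*<0}$-ideal) gives $n\alpha=\tau(\mu)=n\tau(\gamma)$, so $\tau(\gamma)=\alpha+y$ with $ny=0$. This is why the lemma carries the extra hypothesis that neither $\alpha$ nor any $\alpha+y$ with $ny=0$ lies in the image of $\tau$. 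Your proposal does not isolate this torsion caveat, and it is what actually replaces the illegitimate rationalisation step.
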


The significance of theorem \ref{thm:kernel_diff_cob_intro} is that, together with theorem \ref{thm:main_intro}, it provides important examples of classes on smooth manifolds which can be studied using differential cobordism but not using differential cohomology. 
We thus demonstrate by concrete examples that the generalized differential invariants of \cite{hs} are stronger than invariants that can be obtained by just using differential cohomology. 
In section \ref{sec:detecting_kernel} we explain how we can use the Atiyah--Hirzebruch spectral sequence to find non-trivial elements in the kernel of $\trz$ and $\check{\tau}$ whenever $\tau$ is not surjective.

In section \ref{sec:geometric_examples} we switch perspectives and give a concrete and geometric construction of a non-trivial element in the kernel of $\check{\tau}$ for special orthogonal groups.  
From proposition \ref{prop:SOn_for_n_ge_5} we know that 
the generator $e_3 \in H^3(SO(5);\Z)$ is not hit by $\tau$. 
In section \ref{sec:geometric_example_SO5} we show that the class $2e_3$, however, is in the image of $\tau$ by  constructing a proper complex-oriented smooth map 
\begin{align*}
g \colon \Gr{2}{5} \times S^1 \longrightarrow SO(5) 
\end{align*}
such that $\tau([g]) = 2e_3$  
where $\Gr{2}{5}$ denotes the Grassmannian of \emph{oriented} $2$-planes in $\R^5$. 
In section \ref{sec:geometric_example_SO5} we prove the following result which we generalise in section \ref{sec:generalize_to_higher_SOn} to higher dimensional $SO(n)$: 

\begin{theorem}\label{thm:geometric_example_SO5_intro}
The class $\frac{1}{2}[g]$ 
is a non-trivial element in the kernel of 
\begin{align*}
\check{\tau}  \colon \check{MU}(4)^4(SO(5)) \longrightarrow \check{H}(4)^4(SO(5)).  
\end{align*}
\end{theorem}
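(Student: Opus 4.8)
The plan is to realise $\tfrac12[g]$ as the image of $[g]\otimes\tfrac12$ under the structure map of the differential cobordism sequence, to check by a short diagram chase that it dies under $\check\tau$, and then to establish non-triviality from the Atiyah--Hirzebruch spectral sequence description of $MU^3(SO(5))$ obtained in the earlier sections.

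\textbf{Construction.} As recalled in section~\ref{sec:geometric_example_SO5}, the map $g\colon \Gr{2}{5}\times S^1\to SO(5)$ is proper and complex-oriented of fibre codimension $\dim SO(5)-\dim\bigl(\Gr{2}{5}\times S^1\bigr)=10-7=3$, so by Quillen's description it represents a class $[g]\in MU^3(SO(5))$ with $\tau([g])=2e_3$. With $q=4$, write $a\colon MU^3(SO(5))\otimes_{\Z}\rz\to\check{MU}(4)^4(SO(5))$ and $a_H\colon H^3(SO(5);\Z)\otimes_{\Z}\rz\to\check{H}(4)^4(SO(5))$ for the left-hand inclusions in the two short exact sequences of the introduction, and set $\tfrac12[g]:=a\bigl([g]\otimes\tfrac12\bigr)$.

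\textbf{It lies in the kernel.} Commutativity of the left square of the map of short exact sequences induced by $\tau$ gives $\check\tau\circ a=a_H\circ\trz$, where $\trz=\tau\otimes\mathrm{id}_{\rz}$. Hence
\[
\check\tau\bigl(\tfrac12[g]\bigr)=a_H\bigl(\tau([g])\otimes\tfrac12\bigr)=a_H\bigl(2e_3\otimes\tfrac12\bigr)=a_H\bigl(e_3\otimes 1\bigr)=0,
\]
since $2e_3\otimes\tfrac12=e_3\otimes(2\cdot\tfrac12)=e_3\otimes 1=0$ in $H^3(SO(5);\Z)\otimes_{\Z}\rz$ because $1=0$ in $\rz$.

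\textbf{It is non-trivial.} The map $a$ is injective, so it suffices to show that the class of $[g]\otimes\tfrac12$ is non-zero in $\bigl(MU^3(SO(5))/(MU^{*<0}\cdot MU^*(SO(5)))\bigr)\otimes_{\Z}\rz$ and that the ideal generated by $MU^{*<0}$ in $\check{MU}(\ast)^\ast(SO(5))$ meets $\operatorname{im}(a)$ in exactly $a$ of the image of $(MU^{*<0}\cdot MU^*(SO(5)))^3\otimes_{\Z}\rz$; the latter follows from injectivity of $a$ and the identity $\theta\cdot a(z)=a(\theta z)$ for a coefficient $\theta\in MU^{*<0}$, part of the multiplicative structure of Hopkins--Singer differential cobordism. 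For the former I would use the Atiyah--Hirzebruch spectral sequence $E_2^{s,t}=H^s(SO(5);MU^t)\Rightarrow MU^{s+t}(SO(5))$ and the computation of $H^*(SO(5);\Z)$ from section~\ref{sec:Lie_groups} and proposition~\ref{prop:SOn_for_n_ge_5}: the edge homomorphism $MU^3(SO(5))\twoheadrightarrow E_\infty^{3,0}\hookrightarrow H^3(SO(5);\Z)$ is $\tau$, its image $E_\infty^{3,0}$ is the index-two subgroup $\langle 2e_3\rangle\cong\Z$ of the free part of $H^3(SO(5);\Z)$ (the obstruction being the differential $d_3$ that prevents $\tau$ from hitting $e_3$, cf.\ section~\ref{sec:obstructions}), and its kernel is the filtration term $F^5MU^3(SO(5))$. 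Since $MU^t=0$ for $t$ odd and each $MU^{t}$ with $t<0$ even is spanned by monomials in the polynomial generators $x_i\in MU^{-2i}$ of negative degree, every contribution $E_\infty^{p,\ast}$ with $p>3$ consists of $MU^{*<0}$-multiples of lower-filtration classes, so $F^5MU^3(SO(5))$ coincides with $MU^{*<0}\cdot MU^*(SO(5))$ in degree $3$. Consequently
\[
MU^3(SO(5))\big/\bigl(MU^{*<0}\cdot MU^*(SO(5))\bigr)\;\cong\;E_\infty^{3,0}\;\cong\;\Z,
\]
with $[g]$ mapping to the generator $2e_3$. Hence the image of $[g]\otimes\tfrac12$ in this rank-one quotient tensored with $\rz$ is a generator of $\rz$, in particular non-zero, and $\tfrac12[g]$ is non-trivial in the sense of definition~\ref{def:non_trivial_kernel}. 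Equivalently, $\tfrac12[g]$ is precisely the element produced by the mechanism behind theorem~\ref{thm:kernel_diff_cob_intro}, applied to the non-torsion class $e_3\in H^3(SO(5);\Z)$ with $m=2$.

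\textbf{Main obstacle.} The geometric heart of the matter --- exhibiting $g$ with $\tau([g])=2e_3$ --- is carried out in section~\ref{sec:geometric_example_SO5}; granting it, the remaining difficulty is organisational: verifying that the ideal generated by $MU^{*<0}$ in $\check{MU}(\ast)^\ast(SO(5))$ intersects $\operatorname{im}(a)$ exactly in $a$ of the trivial submodule (for which $\theta\cdot a(z)=a(\theta z)$ and the injectivity of $a$ are the key inputs), and matching $F^5MU^3(SO(5))$ with the ideal $MU^{*<0}\cdot MU^*(SO(5))$ --- including the torsion classes coming from $H^5(SO(5);\Z)$ and $H^9(SO(5);\Z)$ --- which rests on the integral cohomology of $SO(5)$ established in section~\ref{sec:Lie_groups}.
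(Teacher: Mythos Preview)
Your overall strategy matches the paper's: reduce to corollary~\ref{cor:geomtetric_example_SO5} (i.e.\ lemma~\ref{lem:detecting_nontrivial_elements_in _kernel} applied with $\alpha=e_3$, $n=2$, $\mu=[g]$), using the geometric input $\tau([g])=2e_3$ from theorem~\ref{thm:geomtetric_example_SO5}. The ``in the kernel'' step and the care you take in transporting non-triviality from $\btrz$ to $\check\tau$ via injectivity of $a$ and the module identity $\theta\cdot a(z)=a(\theta z)$ are fine and in fact more explicit than the paper.

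The gap is in your non-triviality argument. You assert that $F^5MU^3(SO(5))$ equals $(MU^{*<0}\cdot MU^*(SO(5)))^3$ on the grounds that each $E_\infty^{p,3-p}$ with $p>3$ has coefficients in $MU^{3-p}\subset MU^{*<0}$. But this is a statement about the associated graded, not about the filtration itself: an element of $F^5MU^3$ need not literally be a sum of products $\gamma\cdot\mu$ with $\gamma\in MU^{*<0}$, and the passage from ``associated graded pieces are $MU^{*<0}$-modules'' to ``the filtration term equals the ideal'' requires a splitting or further argument you have not supplied. In general $\ker\tau$ can strictly contain $MU^{*<0}\cdot MU^*(X)$, so the surjection $MU^3(SO(5))\otimes_{MU^*}\Z\to E_\infty^{3,0}$ is not obviously an isomorphism.

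Fortunately you do not need this identification. The paper's argument (lemma~\ref{lem:detecting_nontrivial_elements_in _kernel}) is direct: if $[g]\otimes\tfrac12$ vanished in $MU^3(SO(5))\otimes_{MU^*}\rz$, then $[g]$ would be $2\gamma$ modulo the ideal $MU^{*<0}\cdot MU^*(SO(5))$ for some $\gamma$ (up to torsion, but $H^3(SO(5);\Z)\cong\Z$ is torsion-free so this washes out), whence $\tau(\gamma)=e_3$, contradicting proposition~\ref{prop:SOn_for_n_ge_5}. This bypasses the filtration-versus-ideal comparison entirely and is all that is required.
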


As in remark \ref{rem:image_of_ktheory_intro}, we could have formulated theorem \ref{thm:kernel_diff_cob_intro} for $ku$ instead of $MU$ as well, and the corresponding assumption would apply to the groups where surjectivity fails for $ku$ already. 
The geometric construction of theorem \ref{thm:geometric_example_SO5_intro}, however, and its generalisation to higher $SO(n)$ are particular to $MU$. 
Moreover, since the Thom morphism allows for a unified picture, we formulate our findings for $\tau$. 

Finally, we note that the phenomenon the example of theorem \ref{thm:geometric_example_SO5_intro} detects bears a certain similarity with the example used in \cite[\S 2.7]{hs} to explain the behavior of a certain partition function in mathematical physics. 
We refer for example to \cite[Example 48]{gradysatiAHSS} for other interesting phenomena in mathematical physics related to the study of the morphisms between generalised differential cohomology theories. 
We do not know of a potential similar application of theorem \ref{thm:geometric_example_SO5_intro} yet. 
We hope that the techniques to prove theorem \ref{thm:geometric_example_SO5_intro} will be useful to shed new light on the Abel--Jacobi invariant for complex cobordism of \cite{hausquick} and \cite{hfc}. \\


{\bf Acknowledgements:}
The first-named author would like to thank Tobias Barthel for helpful discussions and the Max Planck Institute for Mathematics in Bonn for its hospitality where parts of the work on this paper has been carried out. 
The second-named author was partially supported by the RCN Project No.\,313472 {\it Equations in Motivic Homotopy}. 
Both authors thank Knut Bjarte Haus for helpful discussions on the ideas used in section \ref{sec:geometric_examples}.

\section{Obstructions and detecting elements in the kernel}\label{sec:obstructions_and_detection}

In this section we explain the techniques that we use in section \ref{sec:Lie_groups} to study the cokernel of $\tau$. 
We assume that $X$ is a finite CW-complex for simplicity. 

\subsection{The Thom morphism is an edge map}\label{sec:surjective_Thom}

A key tool in our study of the Thom homomorphism is the Atiyah--Hirzebruch spectral sequence
\begin{align*}
    E_2^{p,q} = H^p(X;MU^q) \implies MU^{p+q}(X).
\end{align*}
Since $MU^\ast \cong \Z[x_{-2},x_{-4},\ldots]$, this spectral sequence is concentrated in the fourth quadrant. 
Since the top row of the $E_2$-page is the integral cohomology of $X$, there is a well-defined edge map such that the composition
\begin{align*}
    MU^p(X) \longrightarrow E_\infty^{p,0} \longrightarrow E_2^{p,0}\cong H^p(X;\Z)
\end{align*}
can be identified with the Thom morphism.  

It then follows from the general theory of spectral sequences that the Thom morphism is surjective if and only if all the differentials starting in the top row of the spectral sequence are trivial. 
Since all the differentials are torsion, the Thom morphism is surjective whenever $H^\ast(X;\Z)$ has no torsion.  

If $H^\ast(X;\Z)$ has torsion, the Thom morphism may still be surjective. 
Since the construction of the spectral sequence is functorial, the differentials starting in the top row of the $E_2$-page are cohomology operations of the form $d \colon H^*(X;\Z) \rightarrow H^{*}(X;A)$ where $A$ is a finitely generated free abelian group.  
If a differential $d$ is 
$p$-torsion, then so is 
the composition $\rho \circ d$, where $\rho$ is the map induced by the reduction modulo $p$ homomorphism of $A$.  
Thus we can describe all differentials using  cohomology operations of type $(\Z,m;\Z/p,n)$. 
These operations correspond to the elements in the cohomology group $H^n(K(\Z,m);\Z/p)$.  

For $p=2$, the cohomology ring $H^\ast(K(\Z,m);\Z/2)$ is a polynomial ring over generators of the form $\Sq^I(\iota_m)$, where $I$ is an admissible sequence where the last term is different from $1$, and $\iota_m$ is the fundamental class of $K(\Z,m)$ as explained in \cite[Chapter 9, Theorem 3]{mt2}. 
Thus, in order to prove that there are no non-trivial differentials that are $2$-torsion, it suffices to check that all Steenrod operations of odd degree are trivial (except $\Sq^1$, since a non-trivial differential increases the cohomological degree by at least $3$).

For odd primes $p$, the cohomology operations we have to study can all be described using the reduced power operations $P^k$ combined with Bocksteins $\beta$ (see \cite{cartan} for a complete description). 
In order to prove surjectivity it therefore suffices to show that all sequences of reduced power operations and Bocksteins that increase the cohomological degree by an odd number greater than $1$ must be trivial. 
The fact that this also works in cases where we have torsion of the form $\Z/{p^k}$ with $k>1$ can be deduced by considering short exact sequences of the form $\Z/p \rightarrow \Z/{p^k} \rightarrow \Z/{p^{k-1}}$.

\subsection{Obstructions and Bockstein cohomology}\label{sec:obstructions}

Now we explain how we can find cohomology classes which are not in the image of the Thom homomorphism. 
From the description of $\tau$ as an edge map we know that 
an element $x\in H^n(X;\Z)$ is not in the image of $\tau$ if there is at least one differential $d$ on the $E_2$-page of the Atiyah--Hirzebruch spectral sequence with $(\rho \circ d)(x)\ne 0$ where   
\begin{align*}
\rho \colon H^\ast(X;\Z) \longrightarrow H^\ast(X;\Z/p)
\end{align*}
is the homomorphism induced by reduction mod $p$. 
Suppose now we know how the Steenrod algebra acts on $H^\ast(X;\Z/p)$. 
In fact, all Steenrod operations of odd degree vanish on the image of $MU^*(X)$ in $H^*(X;\Z/p)$ for all prime numbers $p$ (see for example \cite[page 468]{totaro}, \cite[Proposition 3.6]{BenoistOttem},  \cite{cartanSem}). 
Then it remains to understand how $\rho$ acts.  
The tool we use to find the concrete element in $H^\ast(X;\Z/p)$ a given $x \in H^\ast(X;\Z)$ maps to is \emph{Bockstein cohomology}, the definition of which we now recall from \cite[Chapter 3E]{hatcher}:

The Bockstein homomorphism $\beta \colon H^n(X;\Z/p) \longrightarrow H^{n+1}(X;\Z/p)$ is the connecting homomorphism in the long exact sequence induced in cohomology by the short exact sequence $0 \longrightarrow \Z/p \longrightarrow \Z/{p^2} \longrightarrow \Z/p \longrightarrow 0$. 
It satisfies $\beta^2=0$, and thus defines a chain complex
\begin{equation*}
\begin{tikzcd}
\cdots \arrow[r, "\beta_{n-1}"] & H^n(X;\Z/p) \arrow[r, "\beta_n"] & H^{n+1}(X;\Z/p) \arrow[r, "\beta_{n+1}"] & \cdots.
\end{tikzcd}
\end{equation*}
The \emph{$n$th Bockstein cohomology} of $X$ is defined to be 
\begin{align*}
BH^n(X;\Z/p) := \frac{\Ker \beta_n}{\Imm \beta_{n-1}}.
\end{align*}
We compute the groups $BH^n(X;\Z/p)$ by providing concrete descriptions of the Bockstein complex. 
Since $X$ is assumed to be a finite CW-complex, all cohomology groups of $X$ are finitely generated.  
By \cite[Proposition 3E.3]{hatcher} 
the relationship between $H^\ast(X;\Z)$ and $BH^\ast(X;\Z/p)$ is then given as follows :
\begin{itemize}
    \item Each $\Z$-summand of $H^n(X;\Z)$ contributes one $\Z/p$-summand to $BH^n(X;\Z/p)$.
    \item Each $\Z/p$-summand of $H^n(X;\Z)$ contributes nothing to $BH^n(X;\Z/p)$.
    \item Each $\Z/{p^k}$-summand (with $k\geq 2$) of $H^n(X;\Z)$ contributes one $\Z/p$-summand to $BH^{n-1}(X;\Z/p)$ and one $\Z/p$-summand to $BH^n(X;\Z/p)$.
\end{itemize}

Finally, for an odd prime $p$, we will also use the following  obstruction. 

\begin{lemma}\label{lem:Milnor_obstruction}
Let $Q_1 \colon H^*(X;\Z/p) \to H^{*+2p-1}(X;\Z/p)$ be the first Milnor operation and let $x \in H^i(X;\Z)$ be a non-torsion class. 
If $Q_1(\rho(x)) \ne 0$, then $x$ is not in the image of $ku^i(X) \to H^i(X;\Z)$ and hence not in the image of the Thom morphism. 
\end{lemma}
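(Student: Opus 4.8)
The plan is to exhibit $Q_1$ as (the reduction mod $p$ of) a differential in the Atiyah--Hirzebruch spectral sequence for connective complex $K$-theory $ku$, and then invoke the edge-map description of the map $ku^i(X) \to H^i(X;\Z)$ already recorded in section \ref{sec:surjective_Thom}. Concretely, the coefficient ring is $ku^* \cong \Z[v]$ with $v$ in cohomological degree $-2$, so the AHSS
\begin{align*}
E_2^{p,q} = H^p(X;ku^q) \implies ku^{p+q}(X)
\end{align*}
is concentrated in the fourth quadrant with nontrivial rows only in even $q$. The first possibly nonzero differential out of the top row $E_2^{p,0} = H^p(X;\Z)$ is $d_{2p-1} \colon E_{2p-1}^{p,0} \to E_{2p-1}^{p+2p-1,2-2p}$, landing (via multiplication by $v^{p-1}$) in $H^{p+2p-1}(X;\Z)$. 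The classical identification of this differential — originally due to the fact that the first $k$-invariant of $ku$ beyond the rational part is detected by $Q_1$ — says that, after reduction mod $p$, $d_{2p-1}$ agrees with the Milnor operation $Q_1 \colon H^*(X;\Z/p) \to H^{*+2p-1}(X;\Z/p)$. I would cite this rather than reprove it; it is standard (e.g. it follows from the structure of the mod $p$ cohomology of $ku$ as a module over the subalgebra generated by $Q_0, Q_1$).

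With that in hand the argument is short. Suppose $x \in H^i(X;\Z)$ is non-torsion with $Q_1(\rho(x)) \neq 0$ in $H^{i+2p-1}(X;\Z/p)$. Then $d_{2p-1}(x) \neq 0$ already on the $E_{2p-1}$-page of the $ku$-AHSS, because its mod $p$ reduction is $Q_1(\rho(x)) \neq 0$ (using that there are no intervening differentials into or out of the relevant spots in degrees below $2p-1$ for the top-row class $x$, which holds as $x$ is a class on $E_2^{i,0}$ and the only differential that can hit a top-row class is the trivial one into $E_r^{i,0}$ from the left — there is nothing there since the spectral sequence lives in the fourth quadrant). Hence $x$ does not survive to $E_\infty^{i,0}$, so $x$ is not in the image of the edge map $ku^i(X) \to E_\infty^{i,0} \hookrightarrow E_2^{i,0} = H^i(X;\Z)$. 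Finally, the Thom morphism $\tau \colon MU^i(X) \to H^i(X;\Z)$ factors through $ku^i(X) \to H^i(X;\Z)$ via the tower $MU \to \cdots \to ku \to H\Z$ mentioned in the introduction (concretely the unit $MU \to ku$ followed by the $0$-truncation $ku \to H\Z$), so $x$ is not in the image of $\tau$ either.

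The main obstacle — really the only nontrivial input — is the identification of $d_{2p-1}$ with $Q_1$. I would present this as a citation to the literature on the $ku$-based Atiyah--Hirzebruch spectral sequence (the same circle of ideas already referenced in remark \ref{rem:image_of_ktheory_intro}, where $Q_1$ and $\Sq^3$ are named as the obstructions), rather than redoing the $k$-invariant computation. Everything else is the formal edge-map bookkeeping from section \ref{sec:surjective_Thom} applied verbatim to $ku$ in place of $MU$, plus the observation that $\tau$ factors through $ku$. One small point worth stating explicitly in the write-up: we need $d_{2p-1}(x)$ to be genuinely nonzero \emph{integrally}, and for this it is enough that its mod $p$ reduction $Q_1(\rho(x))$ is nonzero, which is exactly the hypothesis; the non-torsion assumption on $x$ is what guarantees $x$ itself is a nonzero permanent-candidate class to begin with, so that the failure to survive is a genuine obstruction rather than a statement about a class that was zero anyway.
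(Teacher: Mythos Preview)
Your overall strategy is sound and is essentially the same idea as the paper's, just packaged via the AHSS rather than via an exact sequence. Both rest on the single nontrivial input that $Q_1$ arises as the first obstruction (\emph{k}-invariant) to lifting from $H\Z_{(p)}$ to the connective Adams summand. However, there is a genuine gap in your execution for odd $p$.

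You assert that in the AHSS for integral $ku$ the first possibly nonzero differential out of the top row is $d_{2p-1}$. This is false for odd $p$: since $ku^* \cong \Z[v]$ with $|v|=-2$, there are rows at every even non-positive $q$, and the first differential out of the top row is $d_3$, which is the $2$-torsion operation $\beta_2\Sq^2\rho_2$. Thus for odd $p$ you cannot speak of $d_{2p-1}(x)$ without first controlling what $d_3, d_5, \ldots, d_{2p-3}$ do both to $x$ and to the target group $E_{2p-1}^{i+2p-1,2-2p}$; the identification of $d_{2p-1}$ with $Q_1$ modulo $p$ is only clean once those earlier pages are trivialised. For $p=2$ your argument is fine, since then $d_{2p-1}=d_3$ is literally the first differential.

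The paper sidesteps this by working $p$-locally from the start and citing Wilson's exact sequence
\[
ku_{(p)}^i(X) \xrightarrow{\tau_{ku_{(p)}}} H^i(X;\Z_{(p)}) \xrightarrow{\cdot v_1} ku_{(p)}^{i+2p-1}(X),
\]
together with the commutative square identifying the reduction of $\cdot v_1$ with $\pm Q_1$. This is exactly the first nontrivial layer of the Postnikov/AHSS filtration for $BP\langle 1\rangle$, where the first differential out of the top row really is $d_{2p-1}$. Your argument becomes correct if you replace integral $ku$ by $ku_{(p)}$ (using the Adams splitting $ku_{(p)}\simeq \bigvee_{j=0}^{p-2}\Sigma^{2j}BP\langle 1\rangle$, which forces the earlier differentials between summands to vanish) or directly by $BP\langle 1\rangle$; the passage back to integral $ku$ then uses only the commutative square $ku \to ku_{(p)}$ over $H\Z \to H\Z_{(p)}$ together with the non-torsion hypothesis on $x$, exactly as the paper does in its last sentence.
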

\begin{proof}
By \cite[Proposition 1.7]{wilson} (see also  \cite[Proposition 4-4]{tamanoi00}), there is a commutative diagram 
\begin{align*}
\xymatrix{
ku_{(p)}^i(X) \ar[r]^-{\tau_{ku_{(p)}}} & H^i(X;\Z_{(p)}) \ar[d] \ar[r]^-{\cdot v_1} & ku_{(p)}^{i+2p-1}(X) \ar[d] \\
 & H^i(X;\Z/p) \ar[r]_-{\pm Q_1} & H^{i+2p-1}(X;\Z/p)}
\end{align*}
in which the top row is exact, where $ku_{(p)}^i(X)$ denotes $p$-local connective complex $K$-theory and the map $\tau_{ku_{(p)}}$ is the map which factors the canonical morphism $\tau_{BP} \colon BP \to H\Z_{(p)}$ for Brown--Peterson theory. 
Thus, if $Q_1(\rho(x)) \ne 0$, then the image of $x$ in $H^i(X;\Z_{(p)})$ cannot be lifted to $ku_{(p)}^i(X)$. 
This implies that $x$ cannot be lifted to $ku^i(X)$ either,  
and hence the assertion. 
\end{proof}


\subsection{The kernel of the differential Thom morphism}\label{sec:differential_kernel}

We will now explain  how the failure of $\tau$ to be surjective enables us to find non-trivial elements in the kernel of $\check{\tau}$.  
We write $MU^{*<0}\cdot MU^k(X)$ for the subgroup of $MU^k(X)$ consisting of elements of the form $\gamma \cdot \mu$ where $\gamma \in MU^{k-s}$ and $\mu \in MU^{s}(X)$ with $s>k$.  
The sum over all $k$ defines an ideal in $MU^*(X)$ which we denote by $MU^{*<0}\cdot MU^*(X)$.  
Since $\tau(MU^{*<0}) = 0$, we get that $\tau$ induces a well-defined homomorphism 
\begin{equation*}
\tau \colon MU^*(X)/(MU^{*<0}\cdot MU^*(X)) \to H^*(X;\Z) ,
\end{equation*}
which we also denote by $\tau$. 
Consider the homomorphism $MU^* \to \Z$ which sends $n \cdot 1 \in MU^0$ to $n\in \Z$ and $\gamma \in MU^{*<0}$ to $0$. 
Then there is an isomorphism of rings
\begin{equation*}
MU^*(X)/(MU^{*<0}\cdot MU^*(X)) \cong MU^*(X)\otimes_{MU^*}\Z. 
\end{equation*}
By slight abuse of notation, we then also write $MU^k(X)\otimes_{MU^\ast} \Z$ for the group $MU^k(X)/(\oplus_s MU^{s}\cdot MU^{k-s}(X))$. 
Now we let $MU^*$ act on $\rz$ by the map ${MU^\ast \otimes \rz \longrightarrow \rz}$ defined by 
\begin{align*}
 n \otimes a & \longmapsto na, ~ \text{for} ~ n\in MU^0 \cong \Z \\
 \gamma \otimes a & \longmapsto 0, ~ \text{for} ~ \gamma \in MU^{*<0}.
\end{align*}
Then we get a canonical isomorphism  
\begin{align*} 
\left(MU^*(X)\otimes_{MU^\ast} \Z \right) \otimes_\Z \rz \xrightarrow{\cong} MU^*(X) \otimes_{MU^*} \rz. 
\end{align*}
We will now explain how the information on the cokernel of $\tau$ helps to understand the kernel of the induced Thom homomorphism 
\begin{align*}
\btrz \colon MU^*(X) \otimes_{MU^\ast} \rz \longrightarrow H^*(X;\Z)\otimes_{\Z} \rz
\end{align*}
in differential cobordism.   
\begin{lemma}\label{lem:detecting_nontrivial_elements_in _kernel}
Let $\alpha \in H^k(X;\mathbb{Z})$ be a non-torsion class. 
Assume that the image of the Thom morphism 
\[
\tau \colon MU^k(X) \longrightarrow H^k(X;\mathbb{Z})
\]
contains $n\alpha$ for some integer $n> 1$, but not $\alpha$ itself or an element of the form $\alpha+y$, where $n\cdot y=0$. 
Let $\mu \in MU^k(X)$ be an element such that $\tau(\mu)=n\alpha$.  
Then 
\begin{align*}
    \mu \otimes \frac{1}{n} \in MU^k(X) \otimes_{MU^*} \rz 
\end{align*} 
is a non-trivial element in the kernel of the induced  map
\begin{align*}
\btrz \colon MU^k(X) \otimes_{MU^*} \rz \longrightarrow H^*(X;\Z)\otimes_{\Z} \rz.   
\end{align*}
\end{lemma}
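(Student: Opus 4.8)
The plan is to show two things: first, that $\mu \otimes \frac{1}{n}$ lies in the kernel of $\btrz$, and second, that it is non-trivial in the sense of definition~\ref{def:non_trivial_kernel}. The first part is a direct computation. By definition, $\btrz(\mu \otimes \frac{1}{n}) = \tau(\mu) \otimes \frac{1}{n} = n\alpha \otimes \frac{1}{n}$ inside $H^k(X;\Z) \otimes_{\Z} \rz$. Since $\rz = \R/\Z$ is a $\Z$-module and $n\alpha \otimes \frac{1}{n} = \alpha \otimes \frac{n}{n} = \alpha \otimes 1$ and $1 \equiv 0$ in $\rz$, we get $\btrz(\mu \otimes \frac{1}{n}) = 0$. (One has to be slightly careful to argue this over the quotient group $MU^*(X)\otimes_{MU^*}\rz$ rather than over $\Z$, but the action of $MU^*$ on $\rz$ was set up precisely so that $\tau$ descends, and the target $H^k(X;\Z)\otimes_\Z\rz$ genuinely is a tensor over $\Z$, so the manipulation $n\alpha\otimes\frac1n = \alpha\otimes 1 = 0$ is legitimate there.)

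The substance of the proof is the non-triviality. Suppose for contradiction that $\mu \otimes \frac{1}{n}$ lies in the image of the ideal generated by $MU^{*<0}$, i.e. that under the identification $MU^k(X)\otimes_{MU^*}\rz \cong (MU^k(X)\otimes_{MU^*}\Z)\otimes_\Z\rz$ the element $\mu\otimes\frac1n$ is zero. Concretely this means: in the group $\overline{MU}^k(X) := MU^k(X)\otimes_{MU^*}\Z$, writing $\bar\mu$ for the image of $\mu$, the element $\bar\mu \otimes \frac{1}{n} \in \overline{MU}^k(X)\otimes_\Z \rz$ vanishes. I would then unwind what vanishing in $A\otimes_\Z(\R/\Z)$ means for a finitely generated abelian group $A$: from the exact sequence $0 \to \Z \to \R \to \rz \to 0$ and right-exactness of $\otimes$, $A\otimes_\Z\rz = (A\otimes_\Z\R)/\mathrm{im}(A)$, so $a\otimes\frac1n = 0$ iff $\frac1n a \in A$ maps into the image of $A\otimes\Z$ inside $A\otimes\R$; equivalently, after killing torsion, $\frac1n\bar\mu$ already lies in the image of $A_{\mathrm{free}}$, which forces $\bar\mu$ to be divisible by $n$ in $A$ modulo torsion — more precisely there exist $\nu \in \overline{MU}^k(X)$ and a torsion element $t$ with $\bar\mu = n\nu + t$ where $n t = 0$. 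Now apply $\tau$ (which is well defined on $\overline{MU}^k(X)$): $n\alpha = \tau(\bar\mu) = n\,\tau(\nu) + \tau(t)$. Since $\tau$ is a homomorphism and $\tau(\bar\mu)=n\alpha$ is non-torsion while $t$ is $n$-torsion, $\tau(t)$ is $n$-torsion, so writing $\alpha' := \tau(\nu)$ and $y := \tau(t)$ we obtain $n\alpha = n\alpha' + y$ with $ny = 0$, hence $n(\alpha - \alpha') = y$ and $n y = 0$ gives $n^2(\alpha-\alpha')=0$; since $\alpha$ is non-torsion one deduces $\alpha - \alpha'$ is torsion, say of order dividing $n$ after absorbing — in any case $\alpha' = \alpha + (\text{element killed by } n)$, so $\tau$ hits an element of the form $\alpha + y'$ with $ny'=0$, contradicting the hypothesis. (A small amount of care is needed to get the torsion bookkeeping exactly into the shape "$\alpha + y$ with $n\cdot y = 0$" that the hypothesis excludes; this is where I would spend the most effort.)

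The main obstacle, then, is the torsion bookkeeping in the non-triviality argument: translating "$\mu\otimes\frac1n = 0$ in $\overline{MU}^k(X)\otimes_\Z\rz$" into a clean divisibility statement "$\bar\mu = n\nu + t$ with $nt=0$" in the finitely generated abelian group $\overline{MU}^k(X)$, and then checking that applying $\tau$ really does produce an element of $H^k(X;\Z)$ of the forbidden form $\alpha + y$ with $ny = 0$ rather than something with a looser torsion bound. I would handle the first translation by choosing a splitting $\overline{MU}^k(X) \cong \Z^r \oplus T$ with $T$ finite, computing $\otimes_\Z\rz$ summandwise ($\Z\otimes\rz = \rz$, $(\Z/m)\otimes\rz = 0$), and reading off exactly which elements become zero; the second is then elementary. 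Everything else — the kernel computation, the identification of $MU^k(X)\otimes_{MU^*}\rz$ with the left-hand side of the Hopkins--Singer short exact sequence, and the fact that $\tau$ descends to the quotient by $MU^{*<0}$ — is either immediate from the setup in this section or was recalled just above the lemma.
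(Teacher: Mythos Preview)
Your approach is exactly the paper's: the kernel computation $n\alpha\otimes\tfrac1n=\alpha\otimes 1=0$ is identical, and the non-triviality argument is the same divisibility contradiction. The paper's proof is only two sentences; for non-triviality it simply says that if $\mu\otimes\tfrac1n$ were zero then ``$\mu$ had been of the form $n\gamma$'', whence $\tau(\gamma)$ would equal $\alpha$ or $\alpha+y$, contradicting the hypothesis.

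You are more careful than the paper about the torsion bookkeeping, and you correctly flag it as the delicate point --- but the step you flag actually does not go through as you sketch it. The claim that one can arrange $\bar\mu = n\nu + t$ with $nt=0$ does not follow from the splitting you describe: vanishing of $\bar\mu\otimes\tfrac1n$ in $\overline{MU}^k(X)\otimes_\Z\rz$ only tells you the \emph{free} part of $\bar\mu$ is divisible by $n$, leaving $t$ an arbitrary torsion element, not an $n$-torsion one. Applying $\tau$ then gives $\tau(\nu)=\alpha+y$ with $ny=-\tau(t)$, which need not vanish, so you do not land in the forbidden set $\{\alpha+y:ny=0\}$. Indeed, for abstract injections of finitely generated abelian groups the hypothesis of the lemma can hold while $\mu\otimes\tfrac1n=0$: take $\Z\oplus\Z/4\hookrightarrow\Z\oplus\Z/8$ via $(a,b)\mapsto(a,2b)$, with $\alpha=(1,1)$, $n=2$, and $\mu=(2,1)$. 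The paper's two-line proof skips over precisely the same point by writing ``if $\mu$ had been of the form $n\gamma$''. In the actual applications later in the paper the target group is torsion-free in the relevant degree (for instance $H^3(SO(5);\Z)\cong\Z$), so the issue is moot there; but neither your argument nor the paper's closes the gap at the stated level of generality.
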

\begin{proof}
The element 
$\mu \otimes \frac{1}{n}$ 
maps to $0$ under $\btrz$ since $n\alpha \otimes \frac{1}{n} = \alpha \otimes 1 = 0$. 
However, $\mu \otimes \frac{1}{n}$ cannot be $0$ in $MU^k(X) \otimes_{MU^\ast} \rz$, since if $\mu$ had been of the form $n \gamma$, then $\gamma$ would map to $\alpha$ or $\alpha+y$. 
\end{proof}


\subsection{Detecting elements in the kernel of the differential Thom morphism}\label{sec:detecting_kernel}

We now describe a procedure to find an element $\mu$ as in lemma \ref{lem:detecting_nontrivial_elements_in _kernel} using the Atiyah--Hirzebruch spectral sequence. 
In section \ref{sec:geometric_examples} we will give a geometric construction for special orthogonal groups. 
For the other cases, we can proceed as follows. 
Assume that we have a non-torsion cohomology class $\alpha \in H^k(X;\Z)$ which is not in the image of the Thom morphism, while an integer multiple $k\alpha$ is in the image. 
We will now explain how we can then find a cobordism class which maps to $n\alpha$. 
Since $\alpha$ is not in the image of the Thom morphism, there must be at least one non-trivial differential starting at $H^k(X;\Z)$. 
If this differential is, say, $m$-torsion, then $m \alpha$ is in the kernel of the differential and survives to the next page of the spectral sequence. 
Since $X$ is assumed to be finite dimensional, the spectral sequence is bounded on the right, and there can only be finitely many non-trivial differentials starting at any one position. 
By counting how much torsion there is in cohomological degrees greater than $n$, we can then determine an integer $n$ for which $n \alpha$ must be in the image of the Thom morphism. 
Once we have reached the $E_\infty$-page of the spectral sequence, the position $(k,0)$ contains the desired cobordism class.


\section{The cokernel for compact Lie groups}\label{sec:Lie_groups}

The goal of this section is to determine whether or not the Thom morphism is surjective for a given compact, connected, simple Lie group. 
Such a Lie group has a simple Lie algebra. 
Given a simple Lie algebra $\mathfrak{g}$, we find the associated Lie groups using the following method based on \cite[10.7.2, Theorem 4]{procesi}. 
We first determine the unique (up to isomorphism) compact, simply-connected Lie group $G$ with Lie algebra $\mathfrak{g}$. The center $Z(G)$ is always finite. 
The other compact, connected, simple Lie groups with the same Lie algebra are of the form $G/K$, where $K$ is a subgroup of $Z(G)$.
Organising our analysis by the associated Lie algebra is justified by the following observation. 
Given a Lie group $G$, we denote by $\Hfree{\ast}{G}$ the non-torsion part of the cohomology $H^\ast(G;\Z)$. 
Then there is an isomorphism $\Hfree{\ast}{G} \cong \Hfree{\ast}{H}$ if $G$ and $H$ are Lie groups with the same Lie algebra.  
We will therefore recall the non-torsion cohomology part only once in the section for a given Lie algebra.

Unless otherwise stated, the computation of the  cohomology rings can be found in one of the following two sources: 
The cohomology of the groups $SU(n)$, $Sp(n)$, $\Spin(n)$, $SO(n)$ as well as all the exceptional Lie groups and classifying spaces can be found in  \cite{mt}, while the cohomology of $Ss(n)$, $PSO(n)$, $PSp(n)$ and the quotients of $SU(n)$ can be found in  \cite{bb}. 
Finally, given a ring $R$ we write $\Lambda_R(x_{i_1},\ldots,x_{i_n}):= R[x_{i_1},\ldots,x_{i_n}]/(x_{i_1}^2,\ldots,x_{i_n}^2)$, and unless otherwise stated $x_{i_j}$ is an element of degree $i_j$.  
When the choice of ring is clear from the context, we omit $R$ from the notation.



\subsection{Groups with Lie algebra \texorpdfstring{$\mathfrak{b_n}$}{Bn} and \texorpdfstring{$\mathfrak{d_n}$}{Dn}}

The simply-connected Lie groups that correspond to the Lie algebras of type $\mathfrak{b_n}$ and $\mathfrak{d_n}$ are the \textit{spin groups} $\Spin(2n+1)$ and $\Spin(2n)$, respectively. 
We will consider both types of spin groups together, since their cohomology rings are similar. 
However, the possible quotients are different in the odd and even cases. 
The center of $\Spin(2n+1)$ is isomorphic to $\Z/2$, which gives us only one possible quotient, the \textit{odd special orthogonal group}, denoted $SO(2n+1)$. 

For the even case, we know by \cite[Chapter II, Theorem 4.14]{mt} that the centers are given by $Z(\Spin(4n+2)) \cong \Z/4$ and $Z(\Spin(4n)) \cong \Z/2 \oplus \Z/2$. 
For $\Spin(4n+2)$, taking the quotient by the subgroup of order $2$ yields the \textit{even special orthogonal group} $SO(4n+2)$, while taking the quotient by the whole center gives the \textit{projective special orthogonal group} $PSO(4n+2)$. 
For $\Spin(4n)$, the center $\Z/2 \oplus \Z/2$ has three subgroups of order $2$. 
Taking the quotient by the whole center once again produces a projective special orthogonal group $PSO(4n)$. 
One of the subgroups of order $2$ will again give us a special orthogonal group $SO(4n)$. 
The remaining two subgroups produce isomorphic quotient groups, known as the \textit{semi-spin group} $Ss(4n)$ (see \cite[Chapter II, Theorem 4.15]{mt}). 
In total we have to consider four different types of groups. 

\subsubsection{Special orthogonal groups}\label{subsec:Thom_for_SOn}

The non-torsion cohomology of the special orthogonal groups is given by
\begin{align*}
    \Hfree{\ast}{SO(n)} & \cong \begin{cases}
    \Lambda(e_3,e_7,\ldots,e_{2n-3}), & n \text{ odd} \\
    \Lambda(e_3,e_7,\ldots,e_{2n-5}, y_{n-1}), & n \text{ even}.
    \end{cases}
\end{align*}

\begin{prop}\label{prop:SOn_for_n_ge_5}
For $n\ge 5$, the generator $e_3 \in H^3(SO(n);\Z)$ is not in the image of the Thom homomorphism.
\end{prop}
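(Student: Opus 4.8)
The plan is to use the obstruction-theoretic machinery of Section~\ref{sec:obstructions}, namely to exhibit a non-trivial differential on the top row of the Atiyah--Hirzebruch spectral sequence hitting $e_3$. Concretely, I would work at the prime $2$ and show that the mod-$2$ reduction $\rho(e_3) \in H^3(SO(n);\Z/2)$ is nonzero and is mapped nontrivially by an odd-degree Steenrod operation, most naturally $\Sq^2$ or $\Sq^3$; since all odd-degree Steenrod operations vanish on the image of $MU^*(SO(n))$ in $H^*(SO(n);\Z/2)$ (as recalled in Section~\ref{sec:obstructions}), this forces $e_3$ out of the image of $\tau$. Since a nonzero $d_3$-type differential on the top row of the AHSS for $MU$ is detected by $\Sq^3 = \Sq^1\Sq^2$ acting on integral classes, the key computation is to verify $\Sq^3(\rho(e_3)) \ne 0$, or equivalently that $\Sq^2$ followed by the integral Bockstein is nonzero on $\rho(e_3)$.

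First I would recall the mod-$2$ cohomology ring structure of $SO(n)$. By the standard description (e.g.\ from \cite{mt}), $H^*(SO(n);\Z/2)$ is a polynomial-truncated algebra on generators $a_1, a_2, \ldots, a_{n-1}$ with $\deg a_i = i$, subject to the relations $a_i^{2} = a_{2i}$ (with $a_j = 0$ for $j \ge n$), and the Steenrod action is given by the Wu-type formula $\Sq^i(a_j) = \binom{j}{i} a_{i+j}$. The class $a_1^3 + a_1 a_2 = a_1 a_2 + a_1 a_2 = 0$ in low degrees must be analysed carefully, but the integral class $e_3$ reduces mod $2$ to the indecomposable degree-$3$ generator — concretely, $\rho(e_3) = a_3$ (the class $a_1^3 = a_3 + a_1 a_2$ being decomposable, while $e_3$ is the transgression of the generator in $H^*(BSO(n);\Z/2)$ and reduces to the primitive element $a_3$ up to decomposables). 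I would pin this identification down using the fibration $SO(n) \to SO(n+1) \to S^n$ or the known relation between $H^3(SO(n);\Z) = \Z$ and its mod-$2$ reduction.

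Then I would compute: $\Sq^2(a_3) = \binom{3}{2} a_5 = a_5 \ne 0$ for $n \ge 6$, and separately handle $\Sq^1(a_3) = \binom{3}{1} a_4 = a_4$, so that $\Sq^3(a_3) = \Sq^1 \Sq^2 (a_3) = \Sq^1(a_5) = \binom{5}{1} a_6 = a_6$. For $n \ge 7$ this is nonzero directly. For $n = 5$ and $n = 6$ the target classes $a_5, a_6$ may vanish (since $a_j = 0$ for $j \ge n$), so these boundary cases need a separate argument: here I would instead reduce modulo $3$ and use the Milnor operation $Q_1$ of Lemma~\ref{lem:Milnor_obstruction}, or alternatively use the Bockstein-cohomology bookkeeping of Section~\ref{sec:obstructions} together with the known $2$-torsion in $H^4(SO(5);\Z) \cong \Z/2$ to locate the differential $d_3(e_3)$ landing in $H^6(SO(5);MU^{-2}) = H^6(SO(5);\Z)$, which contains $\Z/2$. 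In all cases the nonzero differential $\rho \circ d$ on $e_3$ certifies $e_3 \notin \operatorname{Im}\tau$.

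The main obstacle I anticipate is the boundary cases $n = 5$ and $n = 6$, where the naive $\Sq^3$ computation degenerates because the relevant high-degree generators are already zero in the truncated cohomology ring; there one must argue more delicately, either by passing to an odd prime and invoking the $Q_1$-obstruction of Lemma~\ref{lem:Milnor_obstruction} (checking $Q_1(\rho_3(e_3)) \ne 0$, which for $p=3$ lands in $H^8$), or by a careful Bockstein-spectral-sequence analysis identifying precisely which $\Z/2^k$-summands in $H^*(SO(n);\Z)$ receive the differential off $e_3$. A secondary subtlety is correctly identifying $\rho(e_3)$ as the genuine indecomposable $a_3$ rather than a decomposable combination, since the Steenrod action behaves differently on the two; this I would settle by naturality with respect to the inclusion $SO(5) \hookrightarrow SO(n)$ and the low-dimensional fact that $H^3(SO(3);\Z) = \Z$ with $SO(3) = \RP^3$, where the generator reduces to the generator of $H^3(\RP^3;\Z/2)$.
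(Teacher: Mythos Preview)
Your overall strategy matches the paper's, but the identification of $\rho(e_3)$ is wrong, and this is not a minor subtlety---it is exactly the point. You claim $\rho(e_3)=a_3$, but $\Sq^1(a_3)=\binom{3}{1}a_4=a_4\neq 0$ for $n\ge 5$, while any mod-$2$ reduction of an integral class must lie in $\ker\Sq^1$ (since $\Sq^1=\rho\circ\tilde\beta$ and $\tilde\beta\circ\rho=0$). The paper resolves this via Bockstein cohomology: in degree~$3$ one finds $BH^3(SO(n);\Z/2)$ generated by $u_1^3+u_3$ (equivalently $a_1^3+a_3$ in your presentation), and hence $\rho(e_3)=u_1^3+u_3$. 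Your own proposed naturality check against $SO(3)\cong\RP^3$ would in fact have caught this, since $a_3$ restricts to $0$ there while $e_3$ restricts to a generator.

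With the correct reduction, the paper computes
\[
\Sq^3(u_1^3+u_3)=u_1^6+u_3^2,
\]
and this is nonzero for \emph{all} $n\ge 5$: when $n=5$ or $6$ one has $u_3^2=0$, but $u_1^6\ne 0$ because the minimal vanishing power of $u_1$ is $u_1^8$ for $5\le n\le 8$. So no separate treatment of $n=5,6$ is needed. Your proposed fallback at $p=3$ would not work in any case: $H^*(SO(n);\Z)$ has only $2$-torsion, so $H^*(SO(n);\Z/3)$ is just the exterior algebra on the free generators and $Q_1(\rho_3(e_3))$ lands in a zero group.
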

\begin{proof}
The $\Z/2$-cohomology of $SO(n)$ is given by 
\begin{align} \label{SOmod2}
    H^\ast(SO(n);\Z/2) \cong \Z/2[u_1,u_3,\ldots, u_{2m-1}]/(u_1^{k_1},u_3^{k_3},\ldots, u_{2m-1}^{k_{2m-1}}),
\end{align}
where $m=\lfloor \frac{n}{2}\rfloor$ and $k_i$ is the least power of $2$ such that $\vert u_i^{k_i} \vert \geq n$.  
In order to find the image of $e_3$ in $H^3(SO(n);\Z/2)$ under $\rho$, we need to analyse the Bockstein homomorphism $\beta$.  
From \cite{hatcher}, we have 
\begin{align*}
    \beta(u_{2i-1}) = u_{2i} \text{ and } \beta(u_{2i}) = 0,
\end{align*}
where we interpret $u_{2i}$ as $u_i^2$ and iterate if necessary. 
Assuming $n \geq 5$, we have the following table for the generators for $H^\ast(SO(n);\Z/2)$ in low degrees, where the arrows denote the non-trivial Bockstein homomorphisms.

\begin{equation} \label{sobockstein}
\begin{tikzcd}
\text{Degree:} & 1 & 2 & 3 & 4 \\
\text{Generators:} & u_1 \arrow[r] & u_1^2 & u_1^3 \arrow[r] & u_1^4 \\
& & & u_3 \arrow[ur] & u_1u_3
\end{tikzcd}
\end{equation}
We see that $BH^3(SO(n);\Z/2)$ is generated by $u_1^3 + u_3$. It follows that the reduction map to $\Z/2$-cohomology maps $e_3$ to $u_1^3 + u_3$. 
We can then deduce that $e_3$ is not in the image of the Thom homomorphism, since
\begin{align*}
\Sq^3(u_1^3 + u_3) = u_1^6 + u_3^2 \neq 0.
\end{align*}
Note that $u_3^2 = 0$ if $n=5$, but $u_1^6$ is nonzero.
\end{proof}

\begin{remark}
Using the same methods, we can show that any given generator $e_{4k+3} \in H^{4k+3}(SO(n);\Z)$ is not in the image of the Thom morphism for sufficiently large $n$. 
However, we do not know of an efficient way to determine a minimal $n$ for each generator $e_{4k+3}$, apart from analysing the Bockstein diagrams on a case by case basis. 
We return to this question in section \ref{sec:generalize_to_higher_SOn}. 
\end{remark}

\begin{prop}\label{prop:SOn_for_n_le_5}
For $SO(n)$ with $n \leq 4$, the Thom morphism is surjective in all degrees.
\end{prop}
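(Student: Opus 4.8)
The plan is to verify directly that for each of the small cases $n = 1, 2, 3, 4$ the space $SO(n)$ has torsion-free integral cohomology, and then invoke the general observation from section \ref{sec:surjective_Thom}: since all differentials in the Atiyah--Hirzebruch spectral sequence $E_2^{p,q} = H^p(X;MU^q) \Rightarrow MU^{p+q}(X)$ starting in the top row are torsion-valued cohomology operations, they must all vanish when $H^\ast(X;\Z)$ is torsion-free, and hence the edge map, which is the Thom morphism, is surjective. So the proof reduces to an inspection of $H^\ast(SO(n);\Z)$ for $n \le 4$.

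The key steps, in order, are as follows. First, $SO(1)$ is a point, so there is nothing to check. Second, $SO(2) \cong S^1$, whose cohomology is $\Lambda(e_1)$, torsion-free. Third, $SO(3) \cong \RP^3$; one can either cite the known fact that $H^\ast(\RP^3;\Z) = \Z$ in degrees $0,3$ and $\Z/2$ in degree $\dots$ — wait, $\RP^3$ \emph{does} have $2$-torsion in $H^2$. So for $SO(3)$ the torsion-free argument alone is insufficient, and I would instead use the diffeomorphism $SO(3) \cong \RP^3$ together with the fact that $\RP^3$ is orientable and its only torsion lies in $H^2$; but $H^2$ is in the range $i \le 2$ where the Thom morphism is automatically surjective (as recalled in the introduction, since $K(\Z,i)$ is torsion-free for $i \le 2$), and $H^3 \cong \Z$ is non-torsion, so there is no room for a non-trivial differential out of the top row in the relevant degrees. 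Fourth, for $SO(4)$ one uses the double cover $\Spin(4) \cong S^3 \times S^3 \to SO(4)$, or more directly the diffeomorphism $SO(4) \cong (S^3 \times S^3)/(\Z/2) \cong S^3 \times \RP^3$ (the standard splitting via quaternions); then $H^\ast(SO(4);\Z) \cong H^\ast(S^3;\Z) \otimes H^\ast(\RP^3;\Z)$, whose only torsion is the $\Z/2$ coming from $H^2(\RP^3)$ sitting in degrees $2$ and $5$. Again the degree-$2$ class is in the automatically-surjective range, and one checks that the remaining torsion in degree $5$ cannot be the target of a differential from the top row that obstructs surjectivity onto the free part; more simply, every generator of $H^\ast(SO(4);\Z)$ that could fail to be hit is either in degree $\le 2$ or is a non-torsion exterior generator in degree $3$, and for those the edge-map/formal-group-law description gives surjectivity.

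I would organise the write-up by treating $SO(1), SO(2)$ trivially, then handling $SO(3) \cong \RP^3$ and $SO(4) \cong \RP^3 \times S^3$ together, noting that all torsion classes live in cohomological degrees $\le 2$ where $\tau$ is surjective for all spaces, and that the non-torsion generators lie in degrees where no torsion-valued differential can obstruct them. The main obstacle is the small subtlety that $SO(3)$ and $SO(4)$ are \emph{not} torsion-free, so the clean ``torsion-free implies surjective'' slogan does not literally apply; one must instead observe that the torsion is concentrated in degree $2$ (plus its translate in $SO(4)$), combine the low-degree surjectivity statement ($i \le 2$) from the introduction with the edge-map argument in the remaining degrees, and confirm there are no intermediate degrees where a differential could land. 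This is a short case check rather than a real difficulty, but it is the one place where care is needed to avoid overclaiming.
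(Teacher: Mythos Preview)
Your approach is essentially the paper's: identify $SO(n)$ for $n \le 4$ as $\mathrm{pt}$, $S^1$, $\RP^3$, $\RP^3 \times S^3$, use automatic surjectivity in degrees $\le 2$, and argue that no top-row differential in the Atiyah--Hirzebruch spectral sequence can be non-trivial in higher degrees.

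Your $SO(4)$ paragraph, however, has a genuine muddle. You phrase the concern as whether the degree-$5$ torsion ``can be the target of a differential from the top row that obstructs surjectivity'', but surjectivity in degree $p$ is governed by differentials \emph{starting} at $E_r^{p,0}$, not landing there. You then assert that the only classes that could fail to be hit lie in degree $\le 2$ or are the non-torsion generators in degree $3$; this overlooks $H^5(SO(4);\Z) \cong \Z/2$ and $H^6(SO(4);\Z) \cong \Z$, which must also be checked. The paper's argument is the clean way to close this: since $SO(4)$ is $6$-dimensional and $H^6(SO(4);\Z) \cong \Z$ is torsion-free, every differential $d_r$ with $r \ge 3$ starting in cohomological degree $\ge 3$ lands either in the torsion-free $H^6$ (hence vanishes, being a torsion operation) or in $H^{>6} = 0$. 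That single observation disposes of degrees $3$, $5$, and $6$ simultaneously; your sketch reaches for this but does not quite state it.
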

\begin{proof}
We have the homeomorphisms
\begin{align*}
SO(1) \cong \mathrm{pt}, ~ 
SO(2) \cong S^1, ~ 
SO(3) \cong \RP^3, ~ \text{and} ~ 
SO(4) \cong \RP^3 \times S^3.
\end{align*}
For $SO(1)$ and $SO(2)$, the surjectivity follows from the fact that the Thom morphism is surjective in degrees $\leq 2$. 
For $SO(3)$, we use the same fact for degrees $\leq 2$. 
Moreover, there is no nontrivial differential in the Atiyah--Hirzebruch spectral sequence starting in cohomological degree $3$, since $\RP^3$ is $3$-dimensional. 
This shows that the Thom morphism is also surjective in all degrees for $SO(3)$. 
Finally, the integral cohomology of $SO(4)$ is given by
\begin{align*}
    H^k(SO(4);\Z) \cong \begin{cases}
    \Z, \qquad &k=0,6 \\
    \Z \oplus \Z \quad &k=3 \\
    \Z/2, \quad &k=2,5 \\
    0, \quad &\text{ else.}
    \end{cases}
\end{align*}
Again, there are no differentials which start in degree $\geq 3$, increase the cohomological degree by at least $3$, and which end in torsion. 
Thus, the Thom morphism is surjective for $SO(4)$.  
\end{proof}

\subsubsection{Spin groups}

For the rest of this section, we use the following notation. Given $n\in \mathbb{N}$, we let $q$ be the greatest power of $2$ such that $q|n$, and let $t$ be the least power of $2$ such that $n \leq t$.  
The cohomology of the spin groups is given by
\begin{align*}
    H^\ast(\Spin(n);\Z/2) \cong \Lambda(z) \otimes \Z/2[u_3,u_5,\ldots,u_{2m-1}] / (u_3^{k_3},\ldots,u_{2m-1}^{k_{2m-1}}),
\end{align*}
where $\vert z\vert=t-1$ and where $m$ and the $k_i$'s are as in \eqref{SOmod2}.
\begin{prop}
    For $n \geq 7$, the generator $e_3 \in H^3(\Spin(n);\Z)$ is not in the image of the Thom morphism. For $n \leq 6$, the Thom morphism is surjective in all degrees.
\end{prop}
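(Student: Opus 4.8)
The plan is to mirror the proof of Proposition~\ref{prop:SOn_for_n_ge_5} using the explicit description of $H^\ast(\Spin(n);\Z/2)$ together with the known Bockstein action, and to handle the small cases $n\le 6$ by direct identification of the underlying spaces. For the main claim, fix $n\ge 7$ and work in $H^\ast(\Spin(n);\Z/2)\cong \Lambda(z)\otimes \Z/2[u_3,u_5,\ldots,u_{2m-1}]/(u_3^{k_3},\ldots)$ with $|z|=t-1$. First I would observe that for $n\ge 7$ we have $t\ge 8$, so $|z|=t-1\ge 7$; in particular $z$ does not interfere in low degrees, and the low-degree part of the algebra (degrees $\le 4$) looks just like the image of $H^\ast(SO(n);\Z/2)$ under the covering map, except that $u_1$ and $u_1^2=u_2$ have been killed. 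Concretely, in degree $1$ there is nothing, in degree $2$ nothing, in degree $3$ only $u_3$, and in degree $4$ nothing new (no $u_1u_3$ since $u_1=0$). The Bockstein $\beta(u_3)=u_3^2$ lands in degree $6$, which is nonzero provided $k_3\ge 2$, i.e.\ provided $6=|u_3^2|<n$; since $n\ge 7$ this holds, so $\beta(u_3)\ne 0$.

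Wait — that would make $u_3$ not a Bockstein cycle, and then $BH^3(\Spin(n);\Z/2)$ would be zero, contradicting the fact that $H^3(\Spin(n);\Z)$ contains a $\Z$-summand. So the correct reading must be that the relevant non-torsion class $e_3$ maps to a Bockstein cycle, and the point is to identify which class in degree~$3$ that is. Since in degree~$3$ the only generator is $u_3$ and we have just argued $\beta(u_3)=u_3^2\ne 0$, the Bockstein cohomology in degree~$3$ is forced to vanish, which signals that I have the degree bookkeeping wrong: for $n=7$, $t=8$, so actually the generator of $H^3$ pairing with the $\Z$-summand could be hidden in the interplay with $z$ only when $t-1\le 3$, i.e.\ $n\le 4$. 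The resolution is that the $\Z$-summand in $H^3(\Spin(n);\Z)$ reduces to $u_3$ and $u_3$ \emph{is} a permanent cycle after all because $u_3^2=0$ in $H^\ast(\Spin(n);\Z/2)$ precisely when $k_3=1$, i.e.\ when $|u_3^2|=6\ge n$ — but that needs $n\le 6$, contradicting $n\ge7$. I therefore expect the actual argument runs: for $7\le n$ one still has $u_3^2\ne 0$, hence $\rho(e_3)$ cannot equal $u_3$; instead $\rho(e_3)$ must involve the class $z$, and this forces $t-1=3$, i.e.\ it only works for $n=5,6,7,8$ — so the genuinely uniform statement is that $\rho(e_3)=u_3$ holds whenever $u_3$ survives, and a small extra term is added otherwise. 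The honest thing is to just compute $BH^3$ from the Bockstein diagram case-by-case on the range of $n$ needed; the structure of the computation will be: write out generators in degrees $1$ through $6$, record $\beta(u_{2i-1})=u_{2i}$ (with $u_{2i}$ reinterpreted as squares and iterated), identify $\Ker\beta_3/\Imm\beta_2$, and match it against Proposition~3E.3 of \cite{hatcher} applied to the known integral cohomology. I would then conclude as before: whatever class $c\in H^3(\Spin(n);\Z/2)$ equals $\rho(e_3)$, compute $\Sq^3(c)$ and check it is nonzero, using that $\Sq^3$ is a differential in the Atiyah--Hirzebruch spectral sequence and that all odd Steenrod operations vanish on the image of $MU^\ast$.

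For the small cases $n\le 6$, I would use the low-dimensional accidental isomorphisms: $\Spin(3)\cong SU(2)\cong S^3$, $\Spin(4)\cong SU(2)\times SU(2)\cong S^3\times S^3$, $\Spin(5)\cong Sp(2)$, and $\Spin(6)\cong SU(4)$. In each of these cases the integral cohomology is torsion-free (it is an exterior algebra on odd-degree generators), so by the discussion of Section~\ref{sec:surjective_Thom} all differentials out of the top row of the Atiyah--Hirzebruch spectral sequence vanish and the Thom morphism is surjective in all degrees. This disposes of $n\le 6$ cleanly without any Steenrod computation.

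The main obstacle I anticipate is pinning down exactly which element of $H^3(\Spin(n);\Z/2)$ the class $e_3$ reduces to — the Bockstein bookkeeping near degree~$3$ is delicate because the exterior generator $z$ has degree $t-1$ which, for $n=7,8$, equals $7$ and sits just above the relevant range, while for $n=9,\ldots,16$ it equals $15$ and is irrelevant to low degrees; and the relation $u_3^{k_3}=0$ with $k_3$ the least power of $2$ with $3k_3\ge n$ governs whether $u_3^2$ survives. Once the correct representative $c=\rho(e_3)$ is identified, verifying $\Sq^3(c)\ne 0$ is a routine Cartan-formula computation (e.g.\ $\Sq^3(u_3)=u_3^2$, and $\Sq^3(z)=0$ for degree reasons since $|z|<3$ is false — one uses $\Sq^i z=0$ for $i>|z|$ is automatic but $i=3<|z|$ needs the explicit structure), and the conclusion follows exactly as in Proposition~\ref{prop:SOn_for_n_ge_5}.
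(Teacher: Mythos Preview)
Your handling of the small cases $n\le 6$ is fine and matches the paper's approach: those spin groups have torsion-free integral cohomology (your accidental-isomorphism identifications make this explicit), so the edge-map argument of Section~\ref{sec:surjective_Thom} applies.

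The main part, however, contains a genuine error that sent you spiralling. You wrote ``$\beta(u_3)=u_3^2$ lands in degree~$6$'', but the Bockstein $\beta=\Sq^1$ raises degree by~$1$, not by~$3$. In the $SO(n)$ conventions one has $\beta(u_3)=u_4=u_1^4$; under the double cover $\Spin(n)\to SO(n)$ the class $u_1$ dies, so in $H^\ast(\Spin(n);\Z/2)$ we get $\beta(u_3)=0$. (Equivalently, for $n\ge 7$ the exterior class $z$ has degree $t-1\ge 7$ and the polynomial generators are $u_3,u_5,\ldots$, so $H^4(\Spin(n);\Z/2)=0$ and $\beta(u_3)$ is forced to vanish.) Thus $u_3$ \emph{is} a Bockstein cycle; since $H^2(\Spin(n);\Z/2)=0$ there are no Bockstein boundaries in degree~$3$, and $BH^3$ is generated by $u_3$. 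By the Bockstein-cohomology dictionary this forces $\rho(e_3)=u_3$.

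Once that is established, the obstruction is immediate: $\Sq^3(u_3)=u_3^2$, and $u_3^2\ne 0$ exactly when $k_3\ge 2$, i.e.\ when $6<n$, which is the hypothesis $n\ge 7$. This is precisely the paper's two-line argument. Your long detour (involving $z$ in degree~$3$, case splits on $t$, etc.) was entirely an artifact of the mistaken Bockstein degree; none of it is needed.
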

\begin{proof}
    For $n \leq 7$, there is no torsion in the integral cohomology of $\Spin(n)$, and it follows that the Thom morphism is surjective. However, for $n\geq7$, the generator $e_3 \in H^3(\Spin(n);\Z)$ maps to $u_3\in H^3(\Spin(n);\Z/2)$, for which 
\begin{align*}
\Sq^3 u_3 = u_3^2 \neq 0.
\end{align*}
Thus, $e_3$ is not in the image of the Thom morphism for $n \geq 7$.
\end{proof}

\subsubsection{Semi-spin groups}

The cohomology of the semi-spin groups with coefficients in $\Z/2$ is given by 
\begin{align*}
    &\; H^\ast(Ss(n);\Z/2) \\ \cong &\; \Z/2[v]/(v^q) \otimes \Lambda(z) \otimes \Z/2[u_3,u_5,\ldots, \widehat{u}_{q-1},\ldots,u_{n-1},u_{2q-2}]/(u_3^{k_3},\ldots,u_{n-1}^{k_{n-1}},u_{2q-2}^{k_{2q-2}}),
\end{align*}
where $\vert v \vert = 1$ and $\vert z \vert = t-1$. 
The Steenrod operations are given by
\begin{align*}
    \Sq^j(u_k) = \binom{k}{j} u_{k+j}
\end{align*}
wherever it makes sense, with the exception that
\begin{align} \label{steenrodexcpetionss}
    \Sq^1(u_k) = v^{k+1}
\end{align}
if $q \geq 8$ and $k = \frac{q}{2}-1$. 
Recall that since we are dealing with semi-spin groups, $n$ must be a multiple of $4$. 
Hence we do not need separate cases for even and odd $n$. 
Note also that the class $u_{2q-2}$ will only be included if $2q-2 < n$. 

\begin{prop}
For $Ss(4)$, the Thom morphism is surjective in all degrees. 
For $k\ge 2$, we have: 
If $8 \mid n$, then the generator $e_3 \in H^3(Ss(4k);\Z)$ is not in the image of $\tau$. 
If $8 \nmid n$, then the generator $e_7 \in H^7(Ss(4k);\Z)$ is not in the image of $\tau$. 
\end{prop}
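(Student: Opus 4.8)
The plan is to establish the three claims by the method of Proposition~\ref{prop:SOn_for_n_ge_5}: for the relevant non-torsion generator ($e_3$ or $e_7$) we identify its mod~$2$ reduction $\rho(e_i)\in H^i(Ss(n);\Z/2)$ by a Bockstein-complex computation, and then apply a Steenrod operation of odd degree. Since every such operation vanishes on the image of $MU^*(X)$ in $H^*(X;\Z/2)$ (see section~\ref{sec:obstructions}), a nonzero value on $\rho(e_i)$ forces $e_i\notin\Imm\tau$. For $Ss(4)$, note that $Ss(4)\cong SO(3)\times Sp(1)$ is homeomorphic to $SO(4)\cong\RP^3\times S^3$, so the surjectivity of $\tau$ in all degrees follows from Proposition~\ref{prop:SOn_for_n_le_5}.

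Assume now $k\ge 2$ and first suppose $8\mid n$, so $q\ge 8$. Through degree~$4$ a basis of $H^*(Ss(n);\Z/2)$ is $v,v^2,v^3,u_3,v^4,vu_3$, the class $z$ lying in degree $t-1\ge 7$. Recalling that $\beta=\Sq^1$ for $p=2$, the Steenrod operations stated above give $\beta(v)=v^2$, $\beta(v^3)=v^4$, $\beta(u_3)=v^4$ (directly from the exceptional formula when $q=8$, and from $u_4=v^4$ when $q\ge 16$), and $\beta(v^2)=0$. Hence $BH^3(Ss(n);\Z/2)=\langle v^3+u_3\rangle$, which agrees with the rank-one free part $\Hfree{3}{Ss(n)}\cong\Z\cdot e_3$, so $\rho(e_3)=v^3+u_3$. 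Then
\[
\Sq^3(v^3+u_3)=v^6+u_3^2\neq 0,
\]
since $v^6\neq 0$ as $v^q=0$ with $q\ge 8$. Therefore $e_3$ is not in the image of $\tau$.

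Finally suppose $8\nmid n$, so $q=4$; then $u_{q-1}=u_3$ is omitted while $u_{2q-2}=u_6$ is a genuine polynomial generator, $v^4=0$, and $z$ lies in degree $t-1\ge 15$. In degrees $\le 8$ the nonzero homogeneous pieces of $H^*(Ss(n);\Z/2)$ have bases $\{v\},\{v^2\},\{v^3\}$ (degrees $1,2,3$), $\{u_5\}$ (degree $5$), $\{u_6,vu_5\}$ (degree $6$), $\{u_7,vu_6,v^2u_5\}$ (degree $7$), $\{vu_7,v^2u_6,v^3u_5\}$ (degree $8$); in particular $u_8=v^8=0$. From $\beta=\Sq^1$ and the Steenrod operations one gets $\beta(v)=v^2$, $\beta(u_5)=u_6$, $\beta(u_6)=0$, $\beta(u_7)=u_8=0$, $\beta(vu_5)=v^2u_5+vu_6$, and $\beta(vu_6)=\beta(v^2u_5)=v^2u_6$. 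Hence $\Imm\beta_6=\langle vu_6+v^2u_5\rangle$ and $\Ker\beta_7=\langle u_7,\ vu_6+v^2u_5\rangle$, so $BH^7(Ss(n);\Z/2)=\langle\overline{u_7}\rangle$, matching the rank-one free part $\Hfree{7}{Ss(n)}\cong\Z\cdot e_7$. Therefore $\rho(e_7)=u_7+\lambda(vu_6+v^2u_5)$ for some $\lambda\in\Z/2$. The Cartan formula together with $u_8=0$ gives $\Sq^3(vu_6+v^2u_5)=0$, whereas
\[
\Sq^3(u_7)=\binom{7}{3}u_{10}=u_{10}=u_5^2\neq 0
\]
because $u_5^2\neq 0$ for $n\ge 12$. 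Hence $\Sq^3(\rho(e_7))=u_5^2\neq 0$, and $e_7$ is not in the image of $\tau$.

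The main obstacle is the last case: one must determine the mod~$2$ cohomology ring of $Ss(n)$ through degree~$8$, with the correct behaviour of the generator $u_6=u_{2q-2}$ (which takes over from the now-missing square $u_{q-1}^2$) and of the vanishing class $u_8=v^8$, and then push the Bockstein-cohomology bookkeeping far enough to see that the remaining ambiguity $\lambda(vu_6+v^2u_5)$ in $\rho(e_7)$ is annihilated by $\Sq^3$. The case $8\mid n$ and the case $Ss(4)$ are, respectively, transcriptions of the proofs of Propositions~\ref{prop:SOn_for_n_ge_5} and~\ref{prop:SOn_for_n_le_5}.
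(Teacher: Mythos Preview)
Your overall approach matches the paper's exactly, and the cases $Ss(4)$ and $q=4$ are handled correctly (for $Ss(4)$ you even give the slightly stronger statement that $Ss(4)$ and $SO(4)$ are homeomorphic, not merely that their mod~$2$ cohomology rings with Steenrod action agree). However, there is a genuine error in your treatment of the case $8\mid n$ when $q\ge 16$.

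You assert that $\beta(u_3)=v^4$ ``from $u_4=v^4$ when $q\ge 16$''. There is no such relation: in the tensor-product presentation of $H^*(Ss(n);\Z/2)$ the generator $v$ and the $u_i$ are independent, and the exceptional formula $\Sq^1(u_{q/2-1})=v^{q/2}$ applies to $u_3$ only when $q=8$. For $q\ge 16$ the general rule $\Sq^1(u_3)=\binom{3}{1}u_4$ gives $u_4$, which is $0$ in $H^*(Ss(n);\Z/2)$ since there is no $u_1$ (and hence no $u_2=u_1^2$, $u_4=u_2^2$) in the semi-spin ring. Consequently $\beta(u_3)=0$, so $\Ker\beta_3=\langle u_3\rangle$ and $BH^3=\langle u_3\rangle$; the reduction is $\rho(e_3)=u_3$, \emph{not} $v^3+u_3$ (indeed $v^3+u_3$ is not even in $\Ker\beta_3$ when $q\ge 16$, so it cannot be the reduction of any integral class). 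The paper handles this by splitting $8\mid n$ into two subcases, $\modu{n}{8}{16}$ and $\modu{n}{0}{16}$, and in the latter computes $\Sq^3(u_3)=u_3^2\ne 0$. Your argument as written does not cover $16\mid n$; once you correct $\beta(u_3)=0$ there, the proof goes through identically.
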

\begin{proof}
For $n=4$, the cohomology ring together with its  Steenrod operations of $Ss(4)$ is isomorphic to the cohomology ring with Steenrod operations of $SO(4)$. 
It then follows from proposition \ref{prop:SOn_for_n_le_5} that the Thom morphism is surjective for $Ss(4)$.

Now we assume $n=4k$ and $k\ge 2$. 
There are three cases to consider:\\ 

\textbf{Case 1:} $\fmodu{n}{8}{16}$ 

In this case, $q = 8$, and the $\Z/2$-cohomology ring is given by  
\begin{align*}
    H^\ast(Ss(n);\Z/2) \cong \Z/2[v]/(v^8) \otimes \Lambda(z) \otimes \Z/2[u_3,\ldots,\widehat{u}_7,\ldots,u_{n-1},u_{14}]/(u_3^{k_3},\ldots),
\end{align*}
where $\vert z \vert \geq 7$ since $t$ is at least $8$. Since $q=8$, we get that
\begin{align*}
    \Sq^1(u_3) = v^4
\end{align*}
by equation \eqref{steenrodexcpetionss}. 
The other $\Sq^1$s are easy to work out, which leads to the following Bockstein diagram in low degrees:
\begin{equation*}
\begin{tikzcd}
\text{Degree:} & 1 & 2 & 3 & 4 \\
\text{Generators:} & v \arrow[r] & v^2 & v^3 \arrow[r] & v^4 \\
& & & u_3 \arrow[ur] & v u_3
\end{tikzcd}
\end{equation*}
The similarity to diagram \eqref{sobockstein} is not coincidental, since there is an isomorphism $Ss(8) \cong SO(8)$, and the other semi-spin groups of this form look similar in low degrees. 
The non-torsion class $e_3 \in H^3(Ss(n);\Z)$ maps to $v^3 + u_3 \in H^3(Ss(n);\Z/2)$, and we can check that $\Sq^3$ does not act trivially on this class:
\begin{align*}
    \Sq^3(v^3 + u_3) = v^6 + u_3^2 \neq 0.
\end{align*}

\textbf{Case 2:} $\fmodu{n}{0}{16}$ 

In this case, since $q$ is now greater than $8$, the Bockstein homomorphism acts trivially on $u_3$.
\begin{equation*}
\begin{tikzcd}
\text{Degree:} & 1 & 2 & 3 & 4 \\
\text{Generators:} & v \arrow[r] & v^2 & v^3 \arrow[r] & v^4 \\
& & & u_3 & v u_3
\end{tikzcd}
\end{equation*}
Moreover, the Bockstein cohomology in degree $3$ is  generated by $u_3$ alone. 
This implies that $e_3$ is sent to $u_3$, and we see that $u_3$ is not in the image of the Thom morphism. \\

\textbf{Case 3:} $\fmodu{n}{4}{8}$

We have $q=4$ and $n \geq 12$, which means that the $\Z/2$-cohomology is given by
\begin{align*}
    H^\ast(Ss(n);\Z/2) \cong \Z/2[v]/(v^4) \otimes \Lambda(z) \otimes \Z/2[u_5,u_6,u_7,u_9,\ldots,u_{n-1}]/(u_5^{k_5},\ldots),
\end{align*}
where we see that $\vert z \vert \geq 15$. 
We get the Bockstein diagram
\begin{equation*}
\begin{tikzcd}[row sep=small]
1 & 2 & 3 & 4 & 5 & 6 & 7 & 8 \\
v \arrow[r] & v^2 & v^3  \\
& & & & u_5 \arrow[rd] & v u_5 \arrow[rd] \arrow[r] & v^2 u_5 \arrow[rd] & v^3 u_5 \\
& & & & & u_6 & v u_6 \arrow[r] & v^2 u_6 \\
& & & & & & u_7 & v u_7
\end{tikzcd}
\end{equation*}
where the two arrows starting in $v u_5$ indicate that the Bockstein is given by a sum, i.e.,  $\beta(vu_5) = v^2 u_5 + v u_6$.
In this case, $e_3$ is in the image of the Thom morphism, since its reduction $v^3$ does not survive any Steenrod operations. 
However, we can find a suitable class in degree $7$. 
From the Bockstein diagram we see that $e_7$ is sent to either $u_7$ or $u_7 + v^2u_5 + vu_6$, and we can check that both classes survive $\Sq^3$:
\begin{align*}
    \Sq^3(u_7) = \binom{7}{3}u_{10} &= u_5^2 \neq 0 \\
    \Sq^3(u_7 + v^2 u_5 + v u_6) &= u_5^2 \neq 0. \qedhere 
\end{align*}
\end{proof}


\subsubsection{Projective special orthogonal groups} 

The $\Z/2$-cohomology of the projective special orthogonal groups is given by 
\begin{align*}
    & \; H^\ast(PSO(n);\Z/2)  \\
\cong &  \; \Z/2[v]/(v^q) \otimes \Z/2[u_1,u_3,\ldots,\widehat{u}_{q-1},\ldots,u_{n-1},u_{2q-2}] / (u_1^{k_1},\ldots,u_{n-1}^{k_{n-1}},u_{2q-2}^{k_{2q-2}}),
\end{align*}
where $\vert v \vert=1$, with the Steenrod operations acting by 
\begin{align*}
    \Sq^j u_k = \binom{k}{j} u_{k+j},
\end{align*}
whenever it makes sense, except when $j=1,\; k=\frac{q}{2}-1$ and $q\geq 8$, in which case
\begin{align*}
    \Sq^1(u_{k}) = u_{k+1} + v^{k+1}.
\end{align*}
Note that $u_{2q-2}$ is only be included if $2q-2<n$, as for the semi-spin groups.
If $n$ is odd, then $PSO(n)=SO(n)$. 
Hence we will focus on the case that $n$ is even. 

\begin{prop}
Let $n \ge 8$ be even. If $8 \mid n$, then the generator $e_3\in H^3(PSO(n);\Z)$ is not in the image of $\tau$. 
If $8 \nmid n$, then the generator $e_7\in H^7(PSO(n);\Z)$ is not in the image of $\tau$. 
\end{prop}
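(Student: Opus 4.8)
The plan is to follow the approach used for the semi-spin groups: determine the mod-$2$ reduction of the relevant non-torsion generator via Bockstein cohomology, and then exhibit a non-trivial $\Sq^3$ on it, which rules out the class being in the image of $\tau$ because $\Sq^3$ annihilates the image of $MU^\ast(PSO(n))$ in $H^\ast(PSO(n);\Z/2)$ (section \ref{sec:obstructions}). Throughout, let $q$ be the largest power of $2$ dividing $n$, so that $8 \mid n$ precisely when $q \geq 8$, while $8 \nmid n$ with $n$ even forces $q \in \{2,4\}$; in the latter range the exceptional operation $\Sq^1(u_{q/2-1}) = u_{q/2} + v^{q/2}$ does not occur. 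Since $\Hfree{\ast}{PSO(n)} \cong \Hfree{\ast}{SO(n)} = \Lambda(e_3,e_7,\ldots,e_{2n-5},y_{n-1})$, the classes $e_3$ and $e_7$ span rank-one free summands of $H^3$ and $H^7$ for $n \geq 10$. The Bockstein is computed from $\beta(u_{2i-1}) = u_{2i}$ (with $u_{2i}$ read as $u_i^2$, iterated as needed) together with $\beta(v) = v^2$, the latter being zero when $q = 2$.

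\emph{The case $8 \mid n$.} Here $q \geq 8$, so $u_1$ is a generator and $v^2 \neq 0$. Computing the Bockstein complex through degree $4$, the degree-$3$ generators are $v^3, v^2u_1, vu_1^2, u_1^3, u_3$, with $\Imm\beta_2 = \langle v^2u_1 + vu_1^2\rangle$ and $\beta(u_3) = u_1^4 + v^4$ if $q = 8$ (exceptional formula) and $\beta(u_3) = u_1^4$ if $q \geq 16$. A short computation shows that $BH^3(PSO(n);\Z/2)$ is one-dimensional, generated by $v^3 + u_1^3 + u_3$ if $q = 8$ and by $u_1^3 + u_3$ if $q \geq 16$; since $H^3(PSO(n);\Z)$ has a single $\Z$-summand, \cite[Proposition 3E.3]{hatcher} forces $\rho(e_3)$ to equal this generator modulo $\Imm\beta_2$. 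Applying the top operation $\Sq^3$ then yields $v^6 + u_1^6 + u_3^2$ (resp.\ $u_1^6 + u_3^2$), up to the boundary contribution $\Sq^3(v^2u_1 + vu_1^2) = v^4u_1^2 + v^2u_1^4$; as $v^6$ (resp.\ $u_1^6$) is a nonzero single-variable monomial surviving the truncation relations for $n \geq 8$ which cannot be cancelled by the others, $\Sq^3\rho(e_3) \neq 0$, so $e_3 \notin \Imm\tau$.

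\emph{The case $8 \nmid n$.} Now $q \in \{2,4\}$ and the degree-$3$ obstruction is inconclusive: for $q = 4$ one gets $\rho(e_3) \equiv v^3 \pmod{\Imm\beta_2}$ with $v^6 = 0$, and for $q = 2$ (where $u_1$ is omitted, $v^2 = 0$, and the extra generator $u_2 = u_{2q-2}$ appears) one gets $\rho(e_3) = vu_2$ with $\Sq^3(vu_2) = 0$. So one passes to degree $7$: enumerate the Bockstein complex through degree $8$ (for $q = 2$, and for $q = 4$ with $u_3$ omitted and $u_6 = u_{2q-2}$ present), identify the $\Z/2$-summand of $BH^7(PSO(n);\Z/2)$ produced by $\langle e_7\rangle \subset H^7(PSO(n);\Z)$, read off $\rho(e_7)$ modulo $\Imm\beta_6$, and check $\Sq^3\rho(e_7) \neq 0$. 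The decisive calculation is $\Sq^3(u_7) = \binom{7}{3}u_{10} = u_5^2$, which is nonzero for $n > 10$, with the remaining monomials of $\rho(e_7)$ and of $\Imm\beta_6$ verified not to cancel it. The single small case $n = 10$ (necessarily $q = 2$, where $u_5^2 = 0$) is handled by computing the degree-$7$ Bockstein complex by hand: $\rho(e_7)$ then acquires a monomial such as $u_3u_2^2$, and the Cartan formula gives $\Sq^3(u_3u_2^2) = u_3^2u_2^2 \neq 0$ while $\Sq^3$ kills the relevant boundaries, so $\Sq^3\rho(e_7) \neq 0$ and $e_7 \notin \Imm\tau$.

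The main obstacle is the degree-$7$ analysis when $8 \nmid n$. In contrast to degree $3$, the group $H^7(PSO(n);\Z/2)$ contains many monomials, so the Bockstein complex in degrees $6$--$8$ is large and $BH^7$ may carry extra $\Z/2$-summands (from $\Z/{2^k}$-summands of $H^7$ and $H^8$ with $k\geq 2$) beyond the one coming from $e_7$; isolating the right summand, determining $\rho(e_7)$ accurately enough to guarantee $\Sq^3\rho(e_7) \neq 0$, and excluding cancellations among the $\Sq^3$-images of its monomials all require care. This is aggravated by the different low-degree structure in the $q = 2$ case ($u_1$ absent, $v^2 = 0$) and by the exceptional case $n = 10$, where the identity $\Sq^3(u_7) = u_5^2$ degenerates. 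The case $8 \mid n$, by contrast, is a routine parallel of the semi-spin computation.
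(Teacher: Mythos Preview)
Your proposal is correct and follows essentially the same route as the paper: split into the cases $q\ge 8$, $q=4$, and $q=2$, use the Bockstein complex to pin down $\rho(e_3)$ or $\rho(e_7)$ modulo $\Imm\beta$, and then exhibit a non-trivial odd Steenrod operation on that class. The only noteworthy deviation is in the $q=2$ subcase: the paper remarks that $\Sq^3$ works there as well but opts for $\Sq^7$ because the computation is cleaner (one gets $\Sq^7(u_7+u_2^2u_3)=u_7^2+u_2^4u_3^2$ uniformly, and $\Sq^7$ visibly kills the torsion candidates $vu_2^3$ and $vu_3^2$), which lets the paper avoid singling out $n=10$ as a special case.
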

\begin{proof}
We have to consider the following cases: \\ 

\textbf{Case 1:} $\fmodu{n}{8}{16}$

We have  $q=8$ and note that $\Sq^1(u_3) = u_1^4 + v^4$.  
We get the following diagram of Bockstein homomorphisms:
\begin{equation*}
\begin{tikzcd}[row sep=small]
\text{Degree:} & 1 & 2 & 3 & 4 \\
\text{Generators:} & v \arrow[r] & v^2 & v^3 \arrow[r] & |[alias=vvvv]| v^4 \\
& & & u_3 \arrow[to=vvvv] \arrow[to=14] & vu_3 \\
 & u_1 \arrow[r] & u_1^2 & u_1^3 \arrow[r] & |[alias=14]| u_1^4 \\
& & & & u_1 u_3 \\
& & v u_1 \arrow[r] \arrow[dr] & v^2 u_1 \arrow[dr] & v^3 u_1 \\
& & & v u_1^2 \arrow[r] & v^2 u_1^2 \\
& & & & v u_1^3
\end{tikzcd}
\end{equation*}
The non-torsion class $e_3 \in H^3(PSO(n);\Z)$ maps to either $v^3+u_1^3+u_3$ or $v^3+u_1^3+u_3 + v^2u_1 + vu_1^2$ in $H^3(PSO(n);\Z/2)$, and we have 
\begin{align*}
    &\Sq^3(v^3+u_1^3+u_3) = v^6 + u_1^6 + u_3^2 \neq 0 \\
    &\Sq^3(v^3+u_1^3+u_3 v^2u_1 + vu_1^2) = v^6 + u_1^6 + u_3^2 + v^4u_1^2 + v^2u_1^4 \neq 0.
\end{align*}

\textbf{Case 2:} $\fmodu{n}{0}{16}$

In low degrees, this is almost the same as the previous case, with the exception that $\Sq^1(u_3)=u_1^4$ since $q \geq 16$. This gives us the diagram

\begin{equation*}
\begin{tikzcd}[row sep=tiny]
\text{Degree:} & 1 & 2 & 3 & 4 \\
\text{Generators:} & v \arrow[r] & v^2 & v^3 \arrow[r] & |[alias=vvvv]| v^4 \\
& & & u_3 \arrow[to=14] & vu_3 \\
 & u_1 \arrow[r] & u_1^2 & u_1^3 \arrow[r] & |[alias=14]| u_1^4 \\
& & & & u_1 u_3 \\
& & v u_1 \arrow[r] \arrow[dr] & v^2 u_1 \arrow[dr] & v^3 u_1 \\
& & & v u_1^2 \arrow[r] & v^2 u_1^2 \\
& & & & v u_1^3.
\end{tikzcd}
\end{equation*}
Then $e_3\in H^3(PSO(n);\Z)$ maps to $u_1^3 + u_3$ or $u_1^3 + u_3 + v^2u_1 + v u_1^2$, for which
\begin{align*}
    &\Sq^3(u_1^3+u_3) = u_1^6 + u_3^2 \neq 0 \\
    &\Sq^3(u_1^3+u_3 + v^2u_1 + vu_1^2) = u_1^6 + u_3^2 + v^4u_1^2 + v^2u_1^4 \neq 0.
\end{align*}

\textbf{Case 3:} $\fmodu{n}{4}{8}$ 

Assume $n\geq 12$. 
In this case $q=4$, and consequently the class $u_3$ does not exist. 
The Bockstein diagram is

\begin{equation*}
\adjustbox{scale=0.9}{
\begin{tikzcd}[row sep=tiny]
1 & 2 & 3 & 4 & 5 & 6 & 7 & 8\\
v \arrow[r] & v^2 & v^3 \\
  u_1 \arrow[r] & u_1^2 & u_1^3 \arrow[r] & u_1^4& u_1^5 \arrow[r] & u_1^6 & u_1^7 \arrow[r] & |[alias=18]| u_1^8 \\
   & & & & & & |[alias=7]| u_7 \arrow[to=18] & |[alias=v7]| v u_7 \\
  & & & & |[alias=5]| u_5 \arrow[to=6] & |[alias=115]| u_1 u_5 \arrow[to=125] \arrow[to=116] & |[alias=125]| u_1^2 u_5 \arrow[to=126] & |[alias=135]| u_1^3 u_5 \\ 
   & & & & & |[alias=6]| u_6 & |[alias=116]| u_1 u_6 \arrow[to=126] & |[alias=126]| u_1^2 u_6 \\
  & & & & & |[alias=v5]| v u_5 \arrow[to=vv5] \arrow[to=v6] & |[alias=v115]| v u_1 u_5 \arrow[to=v125] \arrow[to=vv115] \arrow[to=v116] & |[alias=v125]| v u_1^2 u_5\\
  & & & & & & |[alias=vv5]| v^2 u_5 \arrow[to=vv6] &|[alias=vv115]| v^2 u_1 u_5\\
  & & & & & & |[alias=v6]| v u_6 \arrow[to=vv6] & |[alias=v116]| v u_1 u_6 \\
  & & & & & & & |[alias=vv6]| v^2 u_6 \\
   & & & & & & & |[alias=vvv5]| v^3 u_5 \\
  & & & & & & & u_1 u_7 \\
 & v u_1 \arrow[r] \arrow[dr] & v u_1^2 \arrow[dr] & v u_1^3 \arrow[r] \arrow[dr] & v u_1^4 \arrow[dr] & v u_1^5 \arrow[r] \arrow[dr] & v u_1^6 \arrow[dr] & v u_1^7 \\
  & & v^2 u_1 \arrow[r] & v^2 u_1^2 & v^2 u_1^3 \arrow[r] & v^2 u_1^4 & v^2 u_1^5 \arrow[r] & v^2 u_1^6  \\
  & & & v^3 u_1 \arrow[r] & v^3 u_1^2 & v^3 u_1^3 \arrow[r] & v^3 u_1^4 & v^3 u_1^5. \\
\end{tikzcd}
}
\end{equation*}
We see that the class $e_7$ can reduce to several different classes in $H^7(PSO(n);\Z/2)$ depending on our choice of isomorphism $H^7(PSO(n);\Z) \cong \Z \oplus \Z_2^4$. 
We have that $e_7$ maps to $u_7 + u_1^7$ or $u_7 + u_1^7$ plus any of the classes
$u_1^2 u_5 + u_1u_6$, $v^2u_5 + vu_6$, $vu_1^6 + v^2u_1^5$, $v^3u_1^5$. 
We have
 \begin{align*}
     \Sq^3(u_7 + u_1^7) = u_{10} + u_1^{10} \neq 0.
 \end{align*}
We then note that applying $\Sq^3$ to any of the torsion classes $u_1^2 u_5 + u_1u_6$, $v^2u_5 + vu_6$, $vu_1^6 + v^2u_1^5$, $v^3u_1^5$ cannot yield $u_{10}$. 
Hence they cannot cancel out the nonzero contribution we got from $\Sq^3(u_7 + u_1^7)$.  
This proves that $e_7$ and $e_7$ plus torsion are not in the image of the Thom morphism. \\

\textbf{Case 4:} $\fmodu{n}{2}{4}$

Assume $n \geq 10$. Since $q=2$, we there is no class $u_1$, but instead a class $u_2$. 
The cohomology and Bockstein diagrams are therefore
\begin{align*}
    H^\ast(PSO(n);\Z/2) \cong \Z/2[v]/(v^2) \otimes \Z/2[u_2,u_3,u_5,\ldots,u_{n-1}]/(u_2^{k_2},\ldots)
\end{align*}
\begin{equation*}
\begin{tikzcd}
1 & 2 & 3 & 4 & 5 & 6 & 7 & 8\\
v & u_2 & v u_2 & u_2^2 & v u_2^2 & u_2^3 & v u_2^3 & |[alias=24]| u_2^4 \\
& & u_3 \arrow[ur] & v u_3 \arrow[ur] & u_2 u_3 \arrow[ur] & v u_2 u_3 \arrow[ur] & u_2^2 u_3 \arrow[to=24] & v u_2^2 u_3 \\
& & & & & & u_7 \arrow[to=24] & v u_7 \\
& & & & & u_3^2 & v u_3^2 & u_2 u_3^2 \\
& & & & u_5 \arrow[ur] & v u_5 \arrow[ur] & u_2 u_5 \arrow[ur] & v u_2 u_5 \\
& & & & & & & u_3 u_5.
\end{tikzcd}
\end{equation*}

The class $e_7$ is sent to either $u_7 + u_2^2 u_3$, or $u_7 + u_2^2 u_3$ plus one or both of the classes $v u_2^3$, $v u_3^2$. 
As above, we may use $\Sq^3$ as an obstruction, but the computation is easier if we use $\Sq^7$. 
We get 
\begin{align*}
    &\Sq^7(u_7 + u_2^2 u_3) = u_7^2 + u_2^4 u_3^2 \neq 0 \\
    &\Sq^7(v u_2^3) = v^2 u_2^6 = 0 \\
    &\Sq^7(v u_3^2) = v^2 u_3^4 = 0.
\end{align*}
Hence $e_7$ is not be in the image of the Thom morphism. 
Note that $u_7^2=0$ if $n \leq 14$, but $u_2^4 u_3^2$ is nonzero.
\end{proof}

\begin{prop}
For $PSO(2)$, $PSO(4)$ and $PSO(6)$, the Thom morphism is surjective in all degrees. 
\end{prop}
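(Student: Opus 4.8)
The plan is to treat the three groups in turn.

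\emph{The circle $PSO(2)$.} The group $PSO(2)$ is the quotient of $SO(2)\cong S^1$ by its central subgroup of order two, hence is itself diffeomorphic to $S^1$. Since $H^*(S^1;\Z)$ vanishes in degrees $\ge 2$ and the Thom morphism is surjective in cohomological degrees $\le 2$ for every space (see section~\ref{sec:surjective_Thom}), $\tau$ is surjective for $PSO(2)$.

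\emph{The product $PSO(4)$.} I would use the identification $PSO(4)\cong SO(3)\times SO(3)\cong\RP^3\times\RP^3$, which presents $PSO(4)$ as a closed orientable $6$-manifold; note that $H^*(PSO(4);\Z)$ does carry $2$-torsion, so this case is not handled by the torsion-free criterion of section~\ref{sec:surjective_Thom}. Instead I would argue through the edge-map description. Every differential of the Atiyah--Hirzebruch spectral sequence starting in the top row at $(p,0)$ is torsion-valued and lands in cohomological degree $\ge p+3$ with coefficients in a finitely generated free abelian group, so it vanishes whenever $H^{p+r}(PSO(4);\Z)$ is torsion-free. For $p\ge 4$ the target lies in degree $\ge 7>\dim PSO(4)$ and thus vanishes; for $p=3$ the only possibility is $d_3\colon E_3^{3,0}\to E_3^{6,-2}$, whose target is a subquotient of $H^6(PSO(4);\Z)\cong\Z$ and hence zero; and for $p\le 2$ all differentials out of $(p,0)$ are already trivial because $\tau$ is surjective in those degrees. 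So every differential out of the top row vanishes and $\tau$ is surjective for $PSO(4)$.

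\emph{The group $PSO(6)$.} This is the main case. I would use the exceptional isomorphism $PSO(6)\cong PSU(4)=SU(4)/\Gamma_4$, the quotient of $SU(4)$ by its full centre $\Z/4$. From the fibration $SU(4)\to PSO(6)\to B\Z/4$ one reads off, since $H^*(SU(4);\Z)$ is torsion-free while $H^{>0}(B\Z/4;\Z)$ is $2$-primary, that $H^*(PSO(6);\Z)$ has only $2$-primary torsion. So, following the strategy of section~\ref{sec:surjective_Thom}, it suffices to check that every odd-degree Steenrod operation (of degree $\ge 3$) vanishes on the mod-$2$ reduction of every integral cohomology class. I would work in $H^*(PSO(6);\Z/2)\cong\Lambda(v)\otimes\Z/2[u_2]/(u_2^4)\otimes\Lambda(u_3)\otimes\Lambda(u_5)$, the case $q=2$ of the formula for $PSO(n)$ with $|v|=1$, using the Steenrod action on the generators (e.g.\ $\Sq^1u_3=u_2^2$ and $\Sq^2u_3=u_5$) together with $\beta=\Sq^1$. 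Computing the Bockstein complex gives $\rho(e_3)=vu_2$, determines $\rho(e_5)$ and $\rho(e_7)$ up to elements of $\Imm\beta$, and in particular shows these reductions lie in $\Ker\beta$. A direct Cartan-formula computation then shows that $\Sq^3$---and hence, for degree reasons, every $\Sq^{2k+1}$---kills $\rho(e_3)$, $\rho(e_5)$ and every degree-$7$ element of $\Ker\beta$; by the Cartan formula this propagates to the products generating the remaining free classes, and the same computation disposes of the $2$-torsion classes in higher degrees. Thus no differential out of the top row is nontrivial, and $\tau$ is surjective for $PSO(6)$. (Once the quotients $SU(n)/\Gamma_l$ have been analysed, this also follows immediately from $PSO(6)\cong SU(4)/\Gamma_4$ and $4\equiv 0\not\equiv 2\pmod 4$.)

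The first two cases are essentially bookkeeping once the underlying manifolds are identified. The main obstacle is the $PSO(6)$ case: one has to set up the Bockstein complex of $PSO(6)$ carefully---in particular noting that, since $6<8$, there is no generator $u_7$, which is exactly why $PSO(6)$ behaves differently from the larger $PSO(n)$ with $n\equiv 2\pmod 4$---and then verify that all odd Steenrod operations vanish on the reductions of the integral generators. This is a finite but somewhat tedious Cartan-formula computation, for which the identification $PSO(6)\cong SU(4)/\Gamma_4$ provides an independent cross-check.
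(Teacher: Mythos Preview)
Your argument is correct and, for $PSO(4)$ and $PSO(6)$, takes a genuinely different route from the paper.

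For $PSO(4)$ the paper writes down the full Bockstein diagram in $\Z/2$-cohomology, reads off the integral groups, and then checks that the only differentials landing in torsion start in degree $\le 2$ (where $\tau$ is already surjective). You instead invoke the accidental isomorphism $PSO(4)\cong SO(3)\times SO(3)\cong\RP^3\times\RP^3$ and argue purely by dimension and the torsion-freeness of $H^6$. This is cleaner and sidesteps the explicit Bockstein computation; the paper's approach has the advantage of exhibiting the integral cohomology directly.

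For $PSO(6)$ the paper lays out the entire Bockstein complex through degree $15$, identifies the circled classes as the reductions of the free generators, and then states that one checks all odd Steenrod operations of degree $>1$ vanish on them. Your main proposal sketches the same computation but your cross-check via the exceptional isomorphism $PSO(6)\cong SU(4)/\Gamma_4$ is more conceptual: once the $G(n,l)$ case is settled, $n=l=4$ satisfies $\nmodu{l}{2}{4}$ and surjectivity follows at once. Note that in the paper's ordering the $\mathfrak{a}_n$ analysis comes \emph{after} $\mathfrak{b}_n,\mathfrak{d}_n$, so this is a forward reference; if you want a self-contained proof in situ you must actually carry out the Cartan-formula checks you sketch (and you should write $y_5$ rather than $e_5$ for the degree-$5$ free generator). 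Your direct computation relies implicitly on $\Sq^1 v=0$; this is correct---the classes $v,u_2$ pull back from $B\Z/4$, where the Bockstein is trivial since all higher integral cohomology is $\Z/4$---but you should make that point explicit, since the $q\ge 4$ cases of $PSO(n)$ behave differently.
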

\begin{proof}
The isomorphism $PSO(2) \cong SO(2)$ implies that the Thom morphism is surjective for $PSO(2)$, by proposition \ref{prop:SOn_for_n_le_5}. 
For $PSO(4)$, the relevant cohomology rings are given by 
\begin{align*}
    &\Hfree{\ast}{PSO(4)} \cong \Lambda(e_3,y_3) \\
    &H^\ast(PSO(4);\Z/2) \cong \Z[v]/(v^4) \otimes \Z/2[u_1]/(u_1^4). 
\end{align*}
Moreover, we have the Bockstein diagram
\begin{equation*}
\begin{tikzcd}
\text{Degree:} & 1 & 2 & 3 & 4 & 5 & 6\\
\text{Generators:} & v \arrow[r] & v^2 & v^3 \\
 & u_1 \arrow[dr] & v u_1 \arrow[dr] \arrow[r] & v^2 u_1 \arrow[dr] & v^3 u_1 \arrow[dr] \\
 & & u_1^2 & v u_1^2 \arrow[r] & v^2 u_1^2 & v^3 u_1^2 \\
 & & & u_1^3 & v u_1^3 \arrow[r] & v^2 u_1^3 & v^3 u_1^3.
\end{tikzcd}    
\end{equation*}
It follows that the integral cohomology of $PSO(4)$ is given by 
\begin{align*}
    H^k(PSO(4);\Z) \cong \begin{cases}
    \Z, \qquad &k=0,6 \\
    \Z/2, \quad &k=4 \\
    \Z/2 \oplus \Z/2, \quad &k=2,5 \\
    \Z \oplus \Z \oplus \Z/2, \quad &k=3 \\
    0, \quad &\text{ else.}
    \end{cases}
\end{align*}
Since the differentials in the Atiyah--Hirzebruch spectral sequence increase the cohomological degree by at least $3$, 
the only differentials which start at a non-trivial cohomology group to a group with torsion are 
\begin{align*}
    d_3 \colon  H^0(PSO(4);\Z) &\longrightarrow H^3(PSO(4);\Z) \\
    d_3 \colon  H^2(PSO(4);\Z) &\longrightarrow H^5(PSO(4);\Z).
\end{align*}
However, since the Thom morphism is surjective in  degrees $\leq 2$, these differentials must also be trivial. 
Thus we can conclude that the Thom morphism is surjective for $PSO(4)$.

For the group $PSO(6)$, we also need to study the full cohomology ring. 
Since
\begin{align*}
    &\Hfree{\ast}{PSO(6)} \cong \Lambda(e_3,e_7,y_5) \\
    &H^\ast(PSO(6);\Z/2) \cong \Z[v]/(v^2) \otimes \Z/2[u_2,u_3,u_5]/(u_2^4,u_3^2,u_5^2),
\end{align*}
we get the Bockstein diagram
\begin{equation*}
\begin{tikzcd}[row sep=tiny]
 1 & 2 & 3 & 4 & 5 & 6 & 7\\
 v \arrow[r] & u_2 & \circled{$v u_2$} & u_2^2 & v u_2^2 & u_2^3 & v u_2^3 \\
 & & u_3 \arrow[ur] & v u_3 \arrow[ur] & u_2 u_3 \arrow[ur] & v u_2 u_3 \arrow[ur] & \circled{$u_2^2 u_3$} \\
 & & & & \circled{$u_5$} & v u_5 & u_2 u_5
\end{tikzcd}    
\end{equation*}
\begin{equation*}
\begin{tikzcd}[cramped, row sep=tiny, column sep=tiny]
 8 & 9 & 10 & 11 & 12 & 13 & 14 & 15\\
v u_2^2 u_3 & u_2^3 u_3 & \circled{$v u_2^3 u_3$} \\
\circled{$v u_2 u_5$} & u_2^2 u_5 & v u_2^2 u_5 & u_2^3 u_5 & v u_2^3 u_5 \\
u_3 u_5 \arrow[ur] & v u_3 u_5 \arrow[ur]& u_2 u_3 u_5 \arrow[ur] & v u_2 u_3 u_5 \arrow[ur] & \circled{$u_2^2 u_3 u_5$} & v u_2^2 u_3 u_5 & u_2^3 u_3 u_5 & \circled{$v u_2^3 u_3 u_5$}.
\end{tikzcd}    
\end{equation*}
The elements that correspond to non-torsion in $H^\ast(PSO(6);\Z)$ have been circled. This gives us the cohomology groups
\begin{align}\label{pso6}
    H^k(PSO(6);\Z) \cong \begin{cases}
        \Z, \quad & k=0,3,8,15 \\
        \Z/4, \quad & k=2,14 \\
        \Z/2, \quad & k=4,6,11 \\
        \Z \oplus \Z/2, \quad & k=5,10,12 \\
        \Z/4 \oplus \Z/2, \quad & k=9 \\
        \Z \oplus \Z/4 \oplus \Z/2, \quad & k=7\\
        0, \quad &\text{else}.
    \end{cases}
\end{align}
Since isomorphism \eqref{pso6} gives us all the elements in the image of the reduction homomorphism 
$H^\ast(PSO(6);\Z) \rightarrow H^\ast(PSO(6);\Z_2)$, 
it is straight-forward to check that all Steenrod operations of odd degree greater than $1$ are trivial for these elements. 
This shows that all differentials in the Atiyah--Hirzebruch spectral sequence are trivial. 
Thus the Thom morphism is surjective.  
\end{proof}



\subsection{Groups with Lie algebra \texorpdfstring{$\mathfrak{c_n}$}{Cn}}

The integral cohomology of the simply-connected Lie group $Sp(n)$ is given by
\begin{align*}
    H^\ast(Sp(n);\Z) \cong \Lambda(e_3,e_7,\ldots,e_{4n-1}).
\end{align*}
Since the cohomology is torsion-free, we can  conclude: 
\begin{prop}\label{prop:Spn}
The Thom morphism is surjective in all cohomological degrees for $Sp(n)$. \qed
\end{prop}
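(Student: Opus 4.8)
The plan is to deduce the statement directly from the edge-map description of the Thom morphism in section \ref{sec:surjective_Thom}. The only genuine input needed is the cited computation $H^\ast(Sp(n);\Z) \cong \Lambda(e_3,e_7,\ldots,e_{4n-1})$, which shows in particular that $H^p(Sp(n);\Z)$ is a free abelian group for every $p$.

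First I would recall that the Thom morphism $\tau \colon MU^p(Sp(n)) \to H^p(Sp(n);\Z)$ is identified with the edge homomorphism of the Atiyah--Hirzebruch spectral sequence $E_2^{p,q} = H^p(Sp(n);MU^q) \Rightarrow MU^{p+q}(Sp(n))$, so that $\tau$ is surjective if and only if every differential emanating from the top row $E_r^{p,0}$ vanishes; note that there are no differentials \emph{into} the top row, since the spectral sequence is concentrated in the fourth quadrant. Each term $E_r^{p,0}$ is a subquotient of $E_2^{p,0} = H^p(Sp(n);\Z)$, hence is itself a free abelian group, and, as recalled in section \ref{sec:surjective_Thom}, every differential in this spectral sequence is torsion because the spectral sequence degenerates rationally. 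A torsion-valued homomorphism out of a free abelian group is zero, so all differentials out of the top row vanish.

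Consequently $E_\infty^{p,0} = E_2^{p,0} = H^p(Sp(n);\Z)$ for every $p$, the edge map is onto, and $\tau$ is surjective in all degrees. I do not expect any obstacle here: once the torsion-freeness of $H^\ast(Sp(n);\Z)$ is in hand, the conclusion is a formal property of the Atiyah--Hirzebruch spectral sequence, in the same spirit as the general remark made in section \ref{sec:surjective_Thom}.
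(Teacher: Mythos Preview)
Your proposal is correct and follows exactly the paper's approach: the paper simply notes that $H^\ast(Sp(n);\Z)$ is torsion-free and invokes the general observation from section \ref{sec:surjective_Thom} that torsion-free integral cohomology forces all differentials out of the top row of the Atiyah--Hirzebruch spectral sequence to vanish. One minor wording point: since there are no differentials \emph{into} the top row, each $E_r^{p,0}$ is in fact a \emph{subgroup} (not merely a subquotient) of $E_2^{p,0}$, which is what makes the inference ``hence free abelian'' valid.
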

The center of $\mathrm{Sp}(n)$ is isomorphic to $\Z/2$, consisting of the positive and negative of the identity matrix. 
It follows that there is only one other compact Lie group with the same Lie algebra which we obtain by dividing out by $Z(\mathrm{Sp}(n))$ (see \cite{bb}). 
This group is known as \textit{the projective symplectic group}, denoted by $PSp(n)$.


The $\Z/2$-cohomology of $PSp(n)$ is given by 
\begin{align*}
    H^\ast(PSp(n);\Z/2) &\cong \Z/2[v]/(v^{4q}) \otimes \Lambda(b_3,b_7,\ldots,\widehat{b}_{4q-1},\ldots,b_{4n-1}),
\end{align*}
where $q$ is the largest power of $2$ dividing $n$. The Steenrod squares are given by
\begin{align*}
    \Sq^{4j}(b_{4k+3}) = \binom{k}{j} b_{4k+4j+3}
\end{align*}
with all other Steenrod squares trivial, except
\begin{align}\label{pspeven}
    \Sq^1(b_{2q-1}) = v^{2q}
\end{align}
when $n$ is even. 
Equation (\ref{pspeven}) makes the cases of even and odd $n$ different. 
We start with the even case.

\begin{prop}
For all even $n\ge 2$, the generator $e_{2q-1}\in H^{2q-1}(PSp(n);\Z)$ is not in the image of the Thom morphism. 
\end{prop}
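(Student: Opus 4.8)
The plan is to feed the class $e_{2q-1}$ through the machinery of Section~2: the Thom morphism is the edge map of the Atiyah--Hirzebruch spectral sequence, so it suffices to produce a Steenrod operation of odd degree $>1$ that does not vanish on the mod~$2$ reduction $\rho(e_{2q-1})\in H^{2q-1}(PSp(n);\Z/2)$, and — since "$e_{2q-1}$" only denotes a generator of the free part of $H^{2q-1}(PSp(n);\Z)$ modulo torsion — to check the same for every integral lift. There are thus two tasks: (a) identify $\rho(e_{2q-1})$ by a Bockstein-cohomology computation; (b) hit it with a suitable square. Throughout we use that $n$ even forces $q=2^r$ with $r\ge 1$, so $2q-1=2^{r+1}-1\equiv 3\pmod 4$; hence both the free generator $e_{2q-1}$ (recall $\Hfree{\ast}{PSp(n)}\cong\Lambda(e_3,\dots,e_{4n-1})$) and the polynomial generator $b_{2q-1}$ of $H^\ast(PSp(n);\Z/2)$ exist, the latter because its degree is $\equiv 3\pmod 4$, lies below $4n$, and differs from the omitted degree $4q-1$.

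For task (a) I would compute the Bockstein $\beta=\Sq^1$ on generators: $\beta(v)=v^2$, $\beta(b_{2q-1})=v^{2q}$ by \eqref{pspeven}, and $\beta(b_c)=0$ for every other $b_c$ (all remaining $\Sq^1$ vanish). Extending by the Leibniz rule gives $\beta(v^{2i-1})=v^{2i}$, $\beta(v^{2i})=0$, and $\beta$ of any monomial containing a factor $b_c$ with $c<2q-1$ is again a sum of monomials each divisible by such a $b_c$. A short case analysis in degree $2q-1$ then shows that every $\beta$-cycle there is, modulo boundaries, a $\Z/2$-linear combination of $v^{2q-1}+b_{2q-1}$ and products of at least two distinct $b_c$'s; matching the resulting dimension of $BH^{2q-1}(PSp(n);\Z/2)$ with the free rank of $H^{2q-1}(PSp(n);\Z)$ via \cite[Proposition 3E.3]{hatcher} shows that the summand corresponding to the indecomposable $e_{2q-1}$ is generated by $v^{2q-1}+b_{2q-1}$, and also that $H^{2q-1}(PSp(n);\Z)$ has no $\Z/2^k$-summand with $k\ge 2$, so that all of its torsion reduces into $\Imm\beta$ — which consists of sums of monomials each divisible by some $b_c$, $c<2q-1$. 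Hence every integral lift of $e_{2q-1}$ reduces to
\[
\rho(e_{2q-1})=v^{2q-1}+b_{2q-1}+w,
\]
where $w$ is a (possibly zero) sum of monomials each divisible by a $b_c$ with $c<2q-1$.

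For task (b) I would apply the top square $\Sq^{2q-1}$. Since the $b$'s are exterior, $b$-monomials square to zero, so $\Sq^{2q-1}(\rho(e_{2q-1}))=\rho(e_{2q-1})^2=(v^{2q-1})^2+(b_{2q-1})^2+w^2=v^{4q-2}$, which is nonzero in $\Z/2[v]/(v^{4q})$ because $4q-2<4q$. As $2q-1$ is odd and $\ge 3$, and all Steenrod operations of odd degree vanish on the image of $MU^\ast(PSp(n))\to H^\ast(PSp(n);\Z/2)$, it follows that $e_{2q-1}$, and indeed $e_{2q-1}$ plus any torsion, is not in the image of $\tau$. (One could equally use $\Sq^3$: by the Cartan formula $\Sq^3(v^{2q-1})=\binom{2q-1}{3}v^{2q+2}=v^{2q+2}\neq 0$, while $\Sq^3(b_{2q-1})=0$ and $\Sq^3(w)$ is again $b$-divisible, so the $v^{2q+2}$-term survives.)

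The main obstacle is task (a): the Bockstein-cohomology bookkeeping, and in particular verifying that all the ambiguity in $\rho(e_{2q-1})$ — the boundary indeterminacy in $BH^{2q-1}$ together with the reductions of torsion classes — is absorbed into monomials divisible by some $b_c$ with $c<2q-1$, so that the pure power $v^{4q-2}$ produced by the top square is genuinely unaffected. Once that is in place, step~(b) is immediate.
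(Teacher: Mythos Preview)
Your proposal is correct and follows essentially the same route as the paper: identify the mod~$2$ reduction of $e_{2q-1}$ via the Bockstein diagram as $v^{2q-1}+b_{2q-1}$ (up to torsion), then apply the top square $\Sq^{2q-1}$ to obtain $v^{4q-2}\neq 0$. You are more careful than the paper in handling the torsion indeterminacy---the paper simply writes ``plus torsion'' and computes $\Sq^{2q-1}(v^{2q-1}+b_{2q-1})=v^{4q-2}$ without further comment, whereas you explicitly verify that any torsion contribution $w$ squares to zero because every monomial in it carries an exterior factor $b_c$---but the key idea and the obstruction used are identical.
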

\begin{proof}
Assume that $n \ge 2$ is even. 
We may consider the following part of the Bockstein diagram
\begin{equation*}
\begin{tikzcd}
 2q-2 & 2q-1 & 2q \\
 v^{2q-2} & v^{2q-1} \arrow[r] & v^{2q} \\
& b_{2q-1}. \arrow[ur]
\end{tikzcd}
\end{equation*}
It follows that the generator $e_{2q-1}$ is mapped to $v^{2q-1}+b_{2q-1}$ plus torsion. 
We have $\Sq^{2q-1}(v^{2q-1}+b_{2q-1}) = v^{4q-2} \neq 0$, and hence the assertion.  
\end{proof}

\begin{prop}
For all odd $n$, the Thom morphism is surjective for $PSp(n)$ in all cohomological degrees. 
\end{prop}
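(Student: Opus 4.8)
The plan is to verify the criterion of \S\ref{sec:surjective_Thom}: the Thom morphism is surjective in every degree provided every admissible $\Sq^I$ with $|I|$ odd and $\ge 3$, and likewise every admissible composite of reduced power operations and Bocksteins of odd degree $>1$ at each odd prime, acts as zero on the image of the reduction map $\rho\colon H^*(PSp(n);\Z)\to H^*(PSp(n);\Z/p)$.

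First I would dispose of the odd primes. Since $PSp(n)=Sp(n)/\{\pm I\}$ and $H^*(Sp(n);\Z)$ is torsion free, the cohomology $H^*(PSp(n);\Z)$ has only $2$-torsion (recorded in \cite{bb}; it can also be extracted from the Serre spectral sequence of the fibration $Sp(n)\to PSp(n)\to B\Z/2$). Hence for an odd prime $p$ the reduction $\rho$ is surjective, $H^*(PSp(n);\Z/p)\cong H^*(PSp(n);\Z)\otimes\Z/p$, and the mod-$p$ Bockstein vanishes identically. As any admissible monomial in the $P^i$ and $\beta$ of odd total degree contains a factor $\beta$, a short downward induction on its length, using that $\beta=0$ on this ring and that the $P^i$ have even degree, shows it acts trivially. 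So no $p$-torsion differential can leave the top row of the Atiyah--Hirzebruch spectral sequence for $p$ odd, and it remains to treat $p=2$.

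For odd $n$ the integer $q$ equals $1$, so $v^4=0$, the exceptional relation \eqref{pspeven} does not occur, and $H^*(PSp(n);\Z/2)\cong\Z/2[v]/(v^4)\otimes A$ with $A:=\Lambda(b_7,b_{11},\dots,b_{4n-1})$, the nontrivial Steenrod squares on generators being $\Sq^1(v)=v^2$ (automatic, since $|v|=1$) and $\Sq^{4j}(b_{4k+3})=\binom{k}{j}b_{4(k+j)+3}$, where $b_m:=0$ for an out-of-range index. I would first record that $\beta(v)=v^2$ while $\beta$ kills $v^2$, $v^3$ and every $b_i$, so that in the decomposition $H^*(PSp(n);\Z/2)=\bigoplus_{i=0}^{3}v^iA$ one has $\ker\beta=A\oplus v^2A\oplus v^3A=:S$; since $\beta\circ\rho=0$, it follows that $\Imm\rho\subseteq S$, and it suffices to prove that every admissible $\Sq^I$ with $|I|$ odd and $\ge 3$ annihilates $S$. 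The crucial observation is that, because $\Sq^d(b_{4k+3})=0$ whenever $4\nmid d$, the Cartan formula gives $\Sq^d(A)\subseteq A$ for every $d$ and $\Sq^d(A)=0$ when $4\nmid d$; hence an admissible composite $\Sq^{a_1}\cdots\Sq^{a_m}$ can be nonzero on $A$ only if $4\mid a_\ell$ for all $\ell$, which forces $|I|=\sum_\ell a_\ell$ to be divisible by $4$, hence even. For the remaining summands, $v^4=0$ makes $\Sq(v^2)=v^2$ and $\Sq(v^3)=v^3$, so by the Cartan formula $\Sq^I(v^2\omega)=v^2\Sq^I(\omega)$ and $\Sq^I(v^3\omega)=v^3\Sq^I(\omega)$ for every $\omega\in A$ and every admissible $I$; both vanish once $|I|$ is odd. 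By additivity $\Sq^I$ then annihilates all of $S$.

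Together with \S\ref{sec:surjective_Thom} this shows that every differential starting in the top row of the spectral sequence $E_2^{p,q}=H^p(PSp(n);MU^q)\Rightarrow MU^{p+q}(PSp(n))$ is trivial, so the Thom morphism, being the corresponding edge map, is surjective in all cohomological degrees. I expect the main obstacle to be precisely the $p=2$ computation, and within it the fact that one must control not single squares $\Sq^d$ but arbitrary admissible compositions $\Sq^I$ on $\Imm\rho$: the two structural facts above — that $A$ is closed under all squares while being killed by those of degree $\not\equiv 0\pmod 4$, and that $v^2$ and $v^3$ are $\Sq$-invariant — are exactly what makes the composition argument close. A minor point needing care is the bookkeeping for small $n$, where some $b_m$ in the square formulas fall out of range and must be read as zero; working through the exterior subalgebra $A$ rather than through individual generators avoids this.
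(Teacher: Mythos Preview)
Your proof is correct and follows essentially the same strategy as the paper: identify the image of $\rho$ in $H^*(PSp(n);\Z/2)$ as lying in $S = A \oplus v^2A \oplus v^3A$ (the paper via the Bockstein diagram, you via $\Imm\rho\subseteq\ker\beta$) and show that odd-degree Steenrod operations vanish on $S$. The paper argues this by writing $\Sq^{2m+1}=\Sq^1\Sq^{2m}$ and observing that even squares preserve the $v$-grading while $\Sq^1$ is nontrivial only on the $v^1$-component; your argument via the facts $\Sq^d(A)=0$ for $4\nmid d$ and $\Sq(v^i)=v^i$ for $i=2,3$ is an equivalent repackaging, with the minor advantages that it transparently handles arbitrary admissible compositions $\Sq^I$ and that you dispose of odd primes explicitly.
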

\begin{proof}
Assume that $n$ is odd.  
We have the cohomology ring
\begin{align*}
    H^\ast(PSp(n);\Z/2) \cong \Z/2[v]/(v^4) \otimes \Lambda(b_7,b_{11},\ldots,b_{4n-1}).
\end{align*}
The only non-trivial Bocksteins are the ones that go from a term containing $v$ to a term containing $v^2$. 
The Bockstein diagram then takes the form 
\begin{equation*}
\begin{tikzcd}[column sep=small]
    1 & 2 & 3 & \cdots & 7 & 8 & 9 & 10 & \cdots \\
    v \arrow[r] & v^2 & v^3 & & b_7 & v b_7 \arrow[r] & v^2 b_7 & v^3 b_7 & \cdots
\end{tikzcd}    
\end{equation*}
The non-torsion elements of $\Hfree{\ast}{PSp(n)}$ map to elements of the form $b_{i_1}\cdots b_{i_k}$ or $v^3b_{i_1}\cdots b_{i_k}$ in $H^\ast(PSp(n);\Z/2)$, while the torsion elements are sent to elements of the form $v^2b_{i_1}\cdots b_{i_k}$. 
We now claim that none of these elements can survive an odd-dimensional Steenrod square.

Assume that $\alpha \in H^\ast(PSp(n);\Z/2)$ is such that $\Sq^{2n+1}(\alpha) \neq 0$. Then
\begin{align*}
    \Sq^1 \Sq^{2n} \alpha \neq 0,
\end{align*}
which implies that $\Sq^{2n}\alpha$ is of the form $v b_{i_1}\cdots b_{i_k}$. 
Since the number of $v$'s cannot be changed by a Steenrod square of even degree, the class $\alpha$ is a product of $b_i$'s and precisely one $v$. 
However, as seen in the Bockstein diagram, such elements do not correspond to elements of the integral cohomology of $PSp(n)$. 
This implies the assertion. 
\end{proof}



\subsection{Groups with Lie algebra \texorpdfstring{$\mathfrak{a_n}$}{An}}

The integral cohomology of the simply-connected Lie group $SU(n)$ is given by
\begin{align*}
    H^\ast(SU(n);\Z) \cong \Lambda(e_3,e_5,\ldots,e_{2n-1}), 
\end{align*}
and is torsion-free. 
Hence we can conclude: 
\begin{prop}\label{prop:SUn}
The Thom morphism is surjective in all cohomological degrees for $SU(n)$. \qed
\end{prop}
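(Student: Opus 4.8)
The plan is to invoke directly the edge-map description of the Thom morphism established in Section~\ref{sec:surjective_Thom}. Recall that $\tau\colon MU^{\ast}(SU(n))\to H^{\ast}(SU(n);\Z)$ is identified with the composite
\begin{align*}
MU^p(SU(n))\longrightarrow E_\infty^{p,0}\longrightarrow E_2^{p,0}\cong H^p(SU(n);\Z)
\end{align*}
coming from the Atiyah--Hirzebruch spectral sequence $E_2^{p,q}=H^p(SU(n);MU^q)\Rightarrow MU^{p+q}(SU(n))$, and that by the general theory of spectral sequences $\tau$ is surjective if and only if every differential emanating from the top row $E_r^{\ast,0}$ vanishes.

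The key point is that $H^{\ast}(SU(n);\Z)\cong\Lambda(e_3,e_5,\ldots,e_{2n-1})$ is torsion-free. Since the coefficient ring $MU^{\ast}\cong\Z[x_{-2},x_{-4},\ldots]$ is torsion-free and concentrated in non-positive degrees, every group $E_2^{\ast,q}=H^{\ast}(SU(n);MU^q)$ with $q<0$ is likewise torsion-free, and the differentials $d_r\colon E_r^{p,0}\to E_r^{p+r,1-r}$ starting in the top row are cohomology operations that must be torsion-valued (as explained in Section~\ref{sec:surjective_Thom}, they are detected after reduction modulo primes by Steenrod and reduced power operations of positive degree, which annihilate integral classes lifted from $MU$). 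A torsion-valued homomorphism out of a torsion-free group into a torsion-free group is zero; hence all differentials out of the top row vanish.

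Therefore the edge map $\tau$ is surjective in every cohomological degree, which is the assertion. I expect there to be no real obstacle here: the proposition is an immediate application of the principle, already recorded in Section~\ref{sec:surjective_Thom}, that the Thom morphism is surjective whenever $H^{\ast}(X;\Z)$ has no torsion, specialized to $X=SU(n)$ using the known torsion-free cohomology ring. (The only thing to double-check is that one is allowed to cite the general principle verbatim; since the cohomology of $SU(n)$ is a standard exterior algebra on odd generators, nothing further is needed.)
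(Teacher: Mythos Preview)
Your proposal is correct and takes essentially the same approach as the paper: both simply invoke the principle from Section~\ref{sec:surjective_Thom} that the Thom morphism is surjective whenever $H^\ast(X;\Z)$ is torsion-free, applied to the exterior algebra $H^\ast(SU(n);\Z)\cong\Lambda(e_3,e_5,\ldots,e_{2n-1})$. The paper in fact gives no further argument beyond this observation, so your expanded justification is, if anything, more detailed than what appears there.
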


The center of $SU(n)$ is isomorphic to $\Z/n$. 
Hence, depending on $n$, we can take several different quotients of this group. 
The case where we divide out be the entire center is known as the \textit{projective special unitary group} $PSU(n)$. 
The cohomology groups of the various quotients are as follows. 
Let $l$ be a natural number dividing $n$. 
Let $\Gamma_l$ be the subgroup of $Z(SU(n))$ of order $l$, and set $G(n,l):=SU(n)/\Gamma_l$. 
Suppose $p$ is an odd prime dividing $l$ and $p^r$ is the largest power of $p$ dividing $n$. 
Then
\begin{align*}
    H^\ast(G(n,l);\Z/p) \cong \Z/p[y]/(y^{p^r}) \otimes \Lambda(z_1,z_3,\ldots,\widehat{z}_{2p^r-1},\ldots,z_{2n-1}),
\end{align*}
where $\lvert y \rvert = 2$. The power operations are given by
\begin{align*}
    P^k(z_{2i-1}) = \binom{i-1}{k}z_{2i-1+2k(p-1)}, ~ \text{and} ~ 
    \beta(z_{2p^{r-1}-1}) = y^{p^{r-1}}.
\end{align*}
Similarly, if $p=2$ and $4 \mid l$, then
\begin{align*}
    H^\ast(G(n,l);\Z/2) \cong \Z/2[y]/(y^{2^r}) \otimes \Lambda(z_1,z_3,\ldots,\widehat{z}_{2^{r+1}-1},\ldots,z_{2n-1}),
\end{align*}
with
\begin{align*}
\Sq^{2k}(z_{2i-1}) = \binom{i-1}{k}z_{2i-1+2k}, ~ \text{and} ~  \Sq^1(z_{2^r-1}) = y^{2^{r-1}},
\end{align*}
and all other odd-degree Steenrod operations are trivial. 
Finally, if $\modu{l}{2}{4}$, then
\begin{align}\label{su}
    H^\ast(G(n,l);\Z/2) \cong \Z/2[z_1]/({z_1}^{2^{r+1}}) \otimes \Lambda(z_3,z_5,\ldots,\widehat{z}_{2^{r+1}-1},\ldots,z_{2n-1}),
\end{align}
with 
the Steenrod operations
\begin{align*}
\Sq^{2k}(z_{2i-1}) = \binom{i-1}{k}z_{2i-1+2k} ~ \text{and} ~ 
    \Sq^1(z_{2^r-1}) &= z_1^{2^{r}}.
\end{align*}
Although there is significant torsion in the cohomology of $G(n,l)$, the Thom morphism is only non-surjective in specific cases.

\begin{prop}
Let $n, l \ge 1$ be integers with $4 \mid n$ and $\modu{l}{2}{4}$. 
Then the generator $e_{2^r-1} \in H^{2^r-1}(G(n,l);\Z)$ is not in the image of the Thom morphism.
\end{prop}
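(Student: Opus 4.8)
The plan is to follow the template of the previous propositions: reduce $e_{2^r-1}$ modulo $2$, identify the resulting class $\rho(e_{2^r-1}) \in H^{2^r-1}(G(n,l);\Z/2)$ using Bockstein cohomology, and then show that an odd-degree Steenrod operation does not vanish on it. Since $4 \mid n$ we have $r \ge 2$, so $2^r+2 < 2^{r+1}$ and $z_1^{2^r+2} \neq 0$ in the truncated polynomial part of \eqref{su}; this numerical fact will be used repeatedly.

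From the Steenrod data in \eqref{su} one reads off that the Bockstein $\beta = \Sq^1$ satisfies $\beta(z_1) = z_1^2$, $\beta(z_{2^r-1}) = z_1^{2^r}$, and $\beta(z_{2i-1}) = 0$ on the remaining exterior generators. Thus $z_1^{2^r-1}$ and $z_{2^r-1}$ are not $\beta$-cycles individually, but their sum is, and since no $\beta$-boundary in degree $2^r-1$ can contain either $z_1^{2^r-1}$ or $z_{2^r-1}$ as a summand, the class $z_1^{2^r-1} + z_{2^r-1}$ is nonzero in $BH^{2^r-1}(G(n,l);\Z/2)$; by \cite[Proposition 3E.3]{hatcher} it is the summand coming from the $\Z$-summand generated by $e_{2^r-1}$. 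To determine $\rho(e_{2^r-1})$ itself I would restrict along $SU(n) \to G(n,l)$, which is an isomorphism on torsion-free cohomology, kills $z_1$, and (after normalising $e_{2^r-1}$) sends $\rho(e_{2^r-1})$ to $z_{2^r-1} \in H^*(SU(n);\Z/2) = \Lambda(z_3,\dots,z_{2n-1})$. Hence $\rho(e_{2^r-1}) = z_{2^r-1} + z_1 w$ with $\deg w = 2^r-2$, and imposing $\beta(\rho(e_{2^r-1})) = 0$ forces
\[
\rho(e_{2^r-1}) = z_1^{2^r-1} + z_{2^r-1} + \sum_{i=2}^{2^{r-1}-1} \epsilon_i\, z_1^{2^r-2i}\, z_{2i-1}, \qquad \epsilon_i \in \{0,1\},
\]
where each correction term $z_1^{2^r-2i} z_{2i-1}$ is a $\beta$-boundary.

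Next I would apply $\Sq^3$. From $\Sq(z_1) = z_1 + z_1^2$ and Lucas' theorem (using that $2^r-1$ is a string of $r \ge 2$ ones in binary) we get $\Sq^3(z_1^{2^r-1}) = \binom{2^r-1}{3} z_1^{2^r+2} = z_1^{2^r+2}$. Using $\Sq^3 = \Sq^1 \Sq^2$ with $\Sq^2(z_{2^r-1}) = \binom{2^{r-1}-1}{1} z_{2^r+1} = z_{2^r+1}$ and $\Sq^1(z_{2^r+1}) = 0$ gives $\Sq^3(z_{2^r-1}) = 0$. A term-by-term Cartan computation shows $\Sq^3(z_1^{2^r-2i} z_{2i-1}) = 0$ for every correction term, because the binomials $\binom{2^r-2i}{1}$ and $\binom{2^r-2i}{3}$ are even, $\Sq^1(z_{2i-1}) = 0$ for $i \geq 2$, and $\Sq^3(z_{2i-1})$ is at worst an even multiple of $z_1^{2^r}$. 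Therefore $\Sq^3(\rho(e_{2^r-1})) = z_1^{2^r+2} \neq 0$, and since every odd-degree Steenrod operation vanishes on the image of $MU^*(G(n,l))$ in mod-$2$ cohomology, $e_{2^r-1}$ cannot lie in the image of the Thom morphism; equivalently, $\Sq^3$ exhibits a nontrivial differential out of $H^{2^r-1}$ in the Atiyah--Hirzebruch spectral sequence.

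The main obstacle is the bookkeeping in the second paragraph: one must check that the only ambiguity in $\rho(e_{2^r-1})$ is addition of the boundary classes $z_1^{2^r-2i} z_{2i-1}$ (in particular that decomposable non-torsion classes such as $z_3 z_5 z_7$, present when $r = 4$, do not enter, thanks to the normalisation along $SU(n)$), and then that none of these corrections contributes a $z_1^{2^r+2}$ under $\Sq^3$. If one prefers to avoid the Adem relation for $\Sq^3(z_{2^r-1})$, the operation $\Sq^{2^r-1}$ serves just as well: $\Sq^{2^r-1}(z_1^{2^r-1}) = z_1^{2^{r+1}-2}$ is the top term of the squaring expansion, $\Sq^{2^r-1}(z_{2^r-1}) = z_{2^r-1}^2 = 0$, and every correction term still contains an exterior factor after applying $\Sq$, hence cannot produce the pure power $z_1^{2^{r+1}-2}$.
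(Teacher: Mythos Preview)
Your argument is correct and follows the same template as the paper: identify the mod-$2$ reduction of $e_{2^r-1}$ via Bockstein cohomology, then exhibit an odd-degree Steenrod operation that does not vanish on it. The paper, however, takes the shorter path of using $\Sq^{2^r-1}$ rather than $\Sq^3$ --- precisely the alternative you sketch in your last paragraph. Since $\Sq^{2^r-1}$ is the squaring map in degree $2^r-1$, every $\beta$-boundary correction term (each of which carries at least one exterior factor) squares to zero automatically, so the paper's entire computation is the single line $\Sq^{2^r-1}(z_1^{2^r-1}+z_{2^r-1}) = z_1^{2^{r+1}-2}$; no term-by-term Cartan bookkeeping and no $SU(n)$ restriction are needed.

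Two remarks on your $\Sq^3$ route. First, for $r \ge 5$ the $\beta$-boundary corrections $z_1^{2j} m$ can have $m$ a product of three or more exterior generators (e.g.\ $z_1^{2}\,z_3 z_5 z_7$ in degree $17$ when $r=5$), so your enumeration listing only single-factor terms $z_1^{2^r-2i} z_{2i-1}$ is incomplete as written; the conclusion survives because $\Sq^3(z_1^{2j} m) = z_1^{2j}\Sq^3(m)$ still vanishes for these, but this should be said. Second, the $SU(n)$ normalisation is more than you need: any integral class whose Bockstein cohomology class equals $[z_1^{2^r-1}+z_{2^r-1}]$ already fails to lift to $MU$, which is exactly what the proposition asserts about the chosen generator $e_{2^r-1}$.
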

\begin{proof}
The cohomology of $G(n,l)$ is as in \eqref{su}, with $r\geq 2$. 
The relevant part of the Bockstein diagram is
\begin{equation*}
\begin{tikzcd}
\text{Degree:} & 2^{r-1} & 2^r -1 & 2^r \\
\text{Generators:} & z_1^{2^{r-1}} & {z_1}^{2^r -1} \arrow[r] & {z_1}^{2^r} \\
& & z_{2^r -1} \arrow[ur]
\end{tikzcd}
\end{equation*}
It follows that $e_{2^r -1} \in H^{2^r-1}(G(n,l);\Z)$ maps to ${z_1}^{2^r -1} + z_{2^r -1}$ (possibly plus torsion), for which
\begin{align*}
    \Sq^{2^r - 1}({z_1}^{2^r -1} + z_{2^r -1}) = {z_1}^{2^{r+1}-2} \neq 0.
\end{align*}
Thus, $e_{2^r-1}$ is not in the image of the Thom morphism.
\end{proof}
\begin{prop}
    Let $n$, $l$ be positive integers such that $4 \nmid n$ or $\nmodu{l}{2}{4}$. Then the Thom morphism is surjective in all cohomological degrees for $G(n,l)$.
\end{prop}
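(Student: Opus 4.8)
The plan is to apply the criterion of Subsection~\ref{sec:surjective_Thom}: the Thom morphism is surjective for $G(n,l)$ if and only if every differential emanating from the top row of the Atiyah--Hirzebruch spectral sequence is trivial. All these differentials are torsion, and the torsion of $H^\ast(G(n,l);\Z)$ is supported at the primes dividing $l$ (as one sees from the cohomology computations cited at the start of Section~\ref{sec:Lie_groups}), so it suffices to fix a prime $p\mid l$ and to verify that the relevant operations vanish on the image of the reduction homomorphism $\rho_p\colon H^\ast(G(n,l);\Z)\to H^\ast(G(n,l);\Z/p)$. By Subsection~\ref{sec:surjective_Thom} these operations are: all admissible Steenrod operations $\Sq^I$ of odd degree $\geq 3$ when $p=2$; and all compositions of reduced powers $P^k$ and Bocksteins $\beta$ raising the cohomological degree by an odd number $>1$ when $p$ is odd. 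In the latter case such a composition must contain at least one factor $\beta$, since each $P^k$ raises the degree by the even number $2k(p-1)$.

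First I would determine $\rho_p\bigl(H^\ast(G(n,l);\Z)\bigr)$ from the Bockstein-cohomology dictionary of Subsection~\ref{sec:obstructions}, together with the explicit descriptions of $H^\ast(G(n,l);\Z/p)$ and its operations recalled above. Each free generator $e_{2i-1}\in\Hfree{2i-1}{G(n,l)}$ reduces to a $\beta$-cycle (with $\beta=\Sq^1$ for $p=2$); for the overwhelming majority of indices $i$ this cycle is simply $z_{2i-1}$, but for the indices clustered around $p^r$ the exterior generator $z_{2p^r-1}$ is precisely the one that has been deleted, so $e_{2p^r-1}$ is forced --- up to $\beta$-boundaries --- to reduce to a combination of monomials of the form $y^{a}z_{2j-1}$ (and, in the degree-one generator case $\modu{l}{2}{4}$, to $z_1^{2^r-1}+z_{2^r-1}$ and its analogues); similar care is needed for $e_{2p^{r-1}-1}$ when $p^{r-1}\geq 2$, since $z_{2p^{r-1}-1}$ is itself not a $\beta$-cycle. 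The torsion ring generators reduce to $\beta$-boundaries or to powers of $y$. The upshot is that $\rho_p\bigl(H^\ast(G(n,l);\Z)\bigr)$ is spanned by monomials of a restricted shape: the deleted exterior generator does not occur, and the exponent of $y$ (respectively of $z_1$) is bounded.

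The core of the argument is then a direct computation, via the Cartan formula, that the relevant odd-degree operations annihilate every such monomial, and this is exactly where the hypothesis enters. When $\modu{l}{2}{4}$ we have $4\nmid n$, hence $r=1$ and $z_1^{2^{r+1}}=z_1^4=0$; the only a priori dangerous class is $\rho_2(e_3)=z_1^3$, for which $\Sq^3(z_1^3)=z_1^6=0$, and one checks that every odd Steenrod square annihilates $z_1^2$, $z_1^3$, and each $z_j$ with $j\geq 5$. When $4\mid l$ the class $y$ has degree $2$ with $y^{2^r}=0$, $r\geq 2$, and the only nonzero odd Steenrod square on the polynomial-exterior generators is $\Sq^1(z_{2^r-1})=y^{2^{r-1}}$; the Cartan formula then shows that an odd square applied to a monomial $y^{a}z_{2j-1}$ produces only $y$-powers of exponent $\geq 2^r$, which vanish, together with terms whose coefficient is a binomial coefficient $\binom{j-1}{a}$ divisible by $p$ by Lucas's theorem. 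For odd $p$ the analogous bookkeeping shows that $\beta$, applied after any string of reduced powers to a monomial $y^{a}z_{2j-1}$, either kills it, or produces $y^{p^r}=0$, or produces a term whose governing binomial coefficient again vanishes modulo $p$. Finally, the Cartan formula propagates the vanishing from the generating monomials to arbitrary products, so that the relevant operations vanish on all of $\rho_p\bigl(H^\ast(G(n,l);\Z)\bigr)$.

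I expect the main obstacle to be the combinatorial bookkeeping: for every prime $p\mid l$ and every cohomological degree one must pin down the $\beta$-cycle that each free generator actually reduces to --- the Bockstein complex determines this only up to boundaries, which suffices but requires attention --- and then verify that \emph{every} admissible odd-degree operation, not merely $\Sq^{2^r-1}$ or the Milnor operation $Q_1$, is killed on the image. The efficient route, as in the treatment of $PSp(n)$ with odd $n$ above, is structural rather than operation-by-operation: exhibit a set of admissible monomials (bounded $y$- or $z_1$-exponent, no deleted generator) that contains $\rho_p\bigl(H^\ast(G(n,l);\Z)\bigr)$, is preserved by the even-degree operations, and is annihilated by the odd-degree ones. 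It is illuminating to contrast this with the preceding proposition: there $4\mid n$ and $\modu{l}{2}{4}$ force $r\geq 2$, and the class $z_1^{2^r-1}+z_{2^r-1}$ satisfies $\Sq^{2^r-1}(z_1^{2^r-1})=z_1^{2^{r+1}-2}\neq 0$ precisely because $2^{r+1}-2<2^{r+1}$, i.e. the square does not overshoot the nilpotency bound; under the present hypothesis it always does (or else the obstructing binomial coefficient vanishes mod $p$), and this is the whole point.
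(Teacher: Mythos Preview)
Your strategy is sound and would lead to a correct proof, and it shares the essential ingredient with the paper's argument: the reduction to mod~$p$ operations via Section~\ref{sec:surjective_Thom}, and the use of Lucas' theorem to show that the governing binomial coefficients vanish. The case split (odd $p$; $p=2$ with $4\mid l$; $p=2$ with $\modu{l}{2}{4}$) is also the same.

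The one genuine difference is that you plan first to determine the image of $\rho_p$ via the Bockstein dictionary and then to check that odd-degree operations vanish on that image. The paper bypasses this entirely: it argues that an arbitrary odd-degree composition of reduced powers and Bocksteins is trivial on \emph{every} element of $H^\ast(G(n,l);\Z/p)$, not merely on $\rho_p\bigl(H^\ast(G(n,l);\Z)\bigr)$. The point is that such a composition must involve a factor $\beta$ (since each $P^k$ has even degree), the only nontrivial Bockstein is $\beta(z_{2p^{r-1}-1})=y^{p^{r-1}}$, and hence one must first reach $z_{2p^{r-1}-1}$ by a reduced power $P^k(z_{2i-1})$; the paper then shows via Lucas that $\binom{i-1}{k}\equiv 0\pmod p$ whenever $2i-1+2k(p-1)=2p^{r-1}-1$. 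The $p=2$, $4\mid l$ case is handled by the same computation with $P^k:=\Sq^{2k}$, and the $\modu{n}{2}{4}$, $\modu{l}{2}{4}$ case by the $PSp(n)$ argument you already cite. This eliminates precisely the ``combinatorial bookkeeping'' you flag as the main obstacle: there is no need to identify which $\beta$-cycle each $e_{2i-1}$ reduces to, nor to track products through the Cartan formula. Your ``efficient route'' paragraph is heading toward this simplification, but the paper goes one step further by dispensing with the image of $\rho_p$ altogether.
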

\begin{proof}
We will show that all the differentials starting in the top row in the Atiyah--Hirzebruch spectral sequence for $G(n,l)$ must be trivial. 
Suppose a sequence of power operations and Bocksteins of odd degree $\geq\! 3$ is non-trivial when evaluated on an element of $H^\ast(G(n,l);\Z/p)$, where $p$ is an odd prime. 
Since the only non-trivial Bockstein is 
\begin{align*}
    \beta(z_{2p^{r-1}-1}) &= y^{p^{r-1}},
\end{align*}
this can only occur if there is some $z_{2i-1}$ such that 
\begin{align} \label{oddtorsioneq}
    P^k(z_{2i-1}) = \binom{i-1}{k} z_{2i-1+2k(p-1)} = z_{{2p^{r-1}-1}}
\end{align}
for some $k$. 
We will now show that this is impossible by showing that for any $i,k,r$ satisfying \eqref{oddtorsioneq}, 
the binomial coefficient is divisible by $p$.

To simplify the notation, let $j=i-1$. The binomial coefficient $\binom{j}{k}$ can be computed using Lucas' theorem, which says that if 
\begin{align*}
    j &= j_0 + j_1 p + \ldots j_m p^m \\
    k &= k_0 + k_1 p + \ldots k_m p^m
\end{align*}
are the base $p$ expansions of $j$ and $k$, then
\begin{align*}
    \binom{j}{k} \equiv \prod_{t=0}^m \binom{j_t}{k_t}\quad (\text{mod } p) .
\end{align*}
Here we use the convention that $\binom{j}{k} = 0$ if $j <k$. 
From equation \eqref{oddtorsioneq} we see that
\begin{align}\label{oddtorsioneq2}
    j = p^{r-1} + k - kp - 1.
\end{align}
We can assume that $r > 1$, since otherwise the only non-trivial Bockstein is $\beta(z_1) = y$, which leads to a surjective Thom morphism. 
Now, let $s$ be the smallest natural number such that $p^{s-1} \mid k$, but $p^s \nmid k$. 
From equation \eqref{oddtorsioneq2} we see that $s < r$, since otherwise $j$ would be negative. 
We then get
\begin{align*}
    j \equiv k - 1 \quad (\text{mod } p^s).
\end{align*}
Since $k \not\equiv 0 \; (\text{mod } p^s)$, we see that $j_v < k_v$ for some $v < s$, and it follows from Lucas' theorem that $\modu{\binom{j}{k}}{0}{p}$. 
This proves that there are no non-trivial differentials of odd torsion.

It remains to show that there are no nontrivial differentials of $2$-torsion in the remaining cases. 
There are two such cases: when $n$ and $l$ are both multiples of $4$ and when $\modu{n}{2}{4}$. 
In the former case, the only non-trivial Bockstein is
\begin{align*}
    \Sq^1(z_{2^r-1}) &= y^{2^{r-1}},
\end{align*}
and the surjectivity of the Thom morphism follows from the same argument as with the odd torsion, by setting $P^k = \Sq^{2k}$.
In the latter case, if $\modu{n}{2}{4}$ and $\modu{l}{2}{4}$, the cohomology ring is
\begin{align*}
    H^\ast(G(n,l);\Z/2) \cong \Z/2[z_1]/(z_1^4) \otimes \Lambda(z_5,z_7,\ldots,z_{2n-1}),
\end{align*}
and the only Bockstein is $\Sq^1(z_1) = z_1^2$. The surjectivity of the Thom morphism then follows from the same argument as in the case $PSp(n)$ with $n$ odd. 
\end{proof}


\subsection{Groups with exceptional Lie algebras}\label{sec:exceptional_groups}

We will now consider Lie groups with exceptional Lie algebras. 
It turns out that the cases $\mathfrak{g_2}$, $mathfrak{f_4}$ and $\mathfrak{e_6}$ follow the same pattern, 
while we can say a bit more on $\mathfrak{e_7}$ and $\mathfrak{e_8}$. 
We will therefore split our analysis into three subsections. 

\subsubsection{Groups with Lie algebra \texorpdfstring{$\mathfrak{g_2}$}{G2},  \texorpdfstring{$\mathfrak{f_4}$}{F4} and \texorpdfstring{$\mathfrak{e_6}$}{E6}.}

The free cohomologies of the exceptional Lie groups $G_2$, $F_4$ and $E_6$ are given by
\begin{align*}
    H_{\text{free}}^\ast(G_2;\Z) & \cong \Lambda(e_3,e_{11}), \\
    \Hfree{\ast}{F_4} & \cong \Lambda(e_3,e_{11},e_{15},e_{23}), \\
     \Hfree{\ast}{E_6} & \cong \Lambda(e_3,e_9,e_{11},e_{15},e_{17},e_{23}).
\end{align*}

The center of $E_6$ is isomorphic to $\Z/3$ (see \cite{mt}). 
Hence there is another group which has Lie algebra $\mathfrak{e_6}$. 
We will refer to this group as the \textit{centerless $E_6$} and denote it by $E_6/\Gamma_3$. 
%

\begin{prop}
For $G_2$, $F_4$, $E_6$ and $E_6/\Gamma_3$, the generator $e_3$ in integral cohomology is not in the image of the Thom morphism. 
\end{prop}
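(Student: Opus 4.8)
The strategy is the one used throughout this section for the degree-$3$ generator: exhibit the image of $e_3$ under reduction mod a suitable prime $p$ in terms of generators of $H^\ast(-;\Z/p)$ via Bockstein cohomology, and then apply an odd-degree Steenrod operation that does not vanish on that class. For $G_2$, $F_4$, $E_6$ and $E_6/\Gamma_3$ the relevant prime is $p=2$, because in each case $H^3(-;\Z)$ is a free $\Z$-summand while $H^\ast(-;\Z/2)$ carries a polynomial-times-exterior generator $u_3$ in degree $3$ whose Bockstein structure I will track. First I would recall from \cite{mt} the mod-$2$ cohomology rings of these four groups in low degrees together with their $\Sq^1$, which is enough to write down the Bockstein diagram through degree $4$.

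Next I would compute $BH^3(-;\Z/2)$ in each case. Since $e_3$ generates a $\Z$-summand of $H^3(-;\Z)$, by \cite[Proposition 3E.3]{hatcher} (as recalled in section \ref{sec:obstructions}) it contributes a $\Z/2$-summand to $BH^3(-;\Z/2)$, and the reduction $\rho(e_3)$ must be a cycle representing the corresponding class in Bockstein cohomology. Reading off the Bockstein diagram tells me $\rho(e_3)$ up to a boundary, i.e. up to a term in the image of $\beta_2$ (which is $2$-torsion in $H^3(-;\Z)$, and hence irrelevant since $e_3$ is non-torsion, cf.\ the treatment of the torsion ambiguity in the $PSO(n)$ and $PSp(n)$ cases). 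Concretely I expect $\rho(e_3) = u_3$ plus possibly a decomposable correction term coming from lower-degree generators (for $G_2$ there are no such terms, so $\rho(e_3)=u_3$; for the larger groups one may pick up a summand like $u_1^3$ if there is a degree-$1$ class, or it may again be simply $u_3$ — this is exactly the kind of low-degree bookkeeping that must be done group by group).

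Then I would apply $\Sq^3$. The key algebraic input is that $\Sq^3 u_3 = u_3^2$ by the Cartan formula and the instability relation $\Sq^{|x|}x = x^2$, together with $\Sq^3$ on any decomposable correction term, and then verify the total is nonzero in $H^6(-;\Z/2)$. Since $\Sq^3$ is an odd-degree Steenrod operation raising degree by $3$, and since all odd-degree Steenrod operations vanish on the image of $MU^\ast$ in mod-$2$ cohomology (section \ref{sec:obstructions}, citing \cite{totaro, BenoistOttem, cartanSem}), non-vanishing of $\Sq^3 \rho(e_3)$ shows that $\rho \circ d$ is nonzero for some differential $d$ in the Atiyah--Hirzebruch spectral sequence starting at $e_3$, so $e_3$ is not in the image of $\tau$. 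The ambiguity by $2$-torsion classes $y$ with $\rho(y)$ a $\beta_2$-boundary is handled by checking that $\Sq^3$ also kills (or at worst cannot cancel the $u_3^2$ contribution of) those boundary terms, exactly as in the earlier propositions.

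**Main obstacle.** The only real work is the case-by-case identification of $\rho(e_3)$ and the verification that $u_3^2 \neq 0$ in each ring — in particular that $u_3^2$ does not vanish for dimension reasons in the smallest group, $G_2$ (here $\dim G_2 = 14$, so degree $6$ is well within range and $u_3^2 \neq 0$). For $F_4$, $E_6$ and $E_6/\Gamma_3$ the cohomology is larger, so non-vanishing is immediate, but one must be careful that any decomposable correction term in $\rho(e_3)$ contributes a class in degree $6$ that is linearly independent from $u_3^2$ (or else confirm the correction term is absent), so that no cancellation occurs. I expect this to be routine given the explicit ring presentations in \cite{mt}, and the proof in the source likely just records $\rho(e_3) = u_3$ in all four cases and computes $\Sq^3 u_3 = u_3^2 \neq 0$.
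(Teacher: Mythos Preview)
Your proposal is correct and follows essentially the same approach as the paper. The paper's proof is even simpler than you anticipate: since $H^\ast(G_2;\Z/2)$, $H^\ast(F_4;\Z/2)$, and $H^\ast(E_6;\Z/2)$ all have the form $\Z/2[x_3]/(x_3^4)\otimes\Lambda(\text{higher generators})$ with nothing in degrees $1$ or $2$, there are no decomposable correction terms or torsion ambiguities in degree $3$, so $\rho(e_3)=x_3$ on the nose and $\Sq^3 x_3=x_3^2\neq 0$ settles it; for $E_6/\Gamma_3$ the paper simply observes that quotienting by a subgroup of order $3$ leaves the mod-$2$ cohomology unchanged.
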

\begin{proof}
The $\Z/2$-cohomologies of $G_2$, $F_4$ and $E_6$ are given by 
\begin{align*}
    H^\ast(G_2; \Z/2) & \cong \Z/2[x_3]/(x_3^4) \otimes \Lambda(x_5), \\
    H^\ast(F_4; \Z/2) & \cong \Z/2[x_3]/(x_3^4) \otimes \Lambda(x_5,x_{15},x_{23}), \\ 
     H^\ast(E_6; \Z/2) & \cong \Z/2[x_3]/(x_3^4) \otimes \Lambda(x_5,x_9,x_{15},x_{17},x_{23}).
\end{align*}
In each case, the generator $e_3$ in integral cohomology reduces to $x_3$ in $\Z/2$-cohomology and $\Sq^3(x_3) = x_3^2 \neq 0$. 
Hence $e_3$ is not in the image of the Thom morphism. 
Since neither the free cohomology nor the cohomology with coefficients in $\Z/2$ are altered by dividing out by a subgroup of order $3$, the same argument as for $E_6$ applies to $E_6/\Gamma_3$.    
\end{proof}

\begin{remark}
We note that the other generators in the integral cohomology groups of $G_2$, $F_4$, $E_6$ and   $E_6/\Gamma_3$ are in the image the Thom morphism. 
As we will see in proposition \ref{prop:E7} and \ref{prop:E8}, the behaviors of the cohomology of the groups corresponding to the Lie algebras $\mathfrak{e_7}$ and $\mathfrak{e}_8$ are  different. 
\end{remark}

\begin{remark}
The integral cohomology of the exceptional groups, except for $G_2$, also have $3$-torsion, and we could have used a computation at $p=3$ to show non-surjectivity. 
\end{remark}


\subsubsection{Groups with Lie algebra  \texorpdfstring{$\mathfrak{e_7}$}{E7}.}

The free cohomology of the group $E_7$ is given by
\begin{align*}
    \Hfree{\ast}{E_7} & \cong \Lambda(e_3,e_{11},e_{15},e_{19},e_{23},e_{27},e_{35}).     
\end{align*}
The center of $E_7$ is isomorphic to $\Z/2$ (see \cite{mt}), 
 and hence there is another group which has the Lie algebra $E_7$. 
We will refer to this group as the \textit{centerless $E_7$} and denote it by $E_7/\Gamma_2$.
\begin{prop}\label{prop:E7}
For $E_7$ and $E_7/\Gamma_2$, the generators $e_3$ and $e_{15}$ in integral cohomology are not in the image of the Thom morphism. 
\end{prop}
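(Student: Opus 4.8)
The plan is to apply the method of Sections~\ref{sec:surjective_Thom} and~\ref{sec:obstructions} to each of the two generators in turn. For a generator $e_j$ one first determines its image $\rho(e_j)$ in $\Z/p$-cohomology by computing the Bockstein cohomology $BH^j$, and then exhibits an odd-degree Steenrod (or Milnor) operation which is non-zero on $\rho(e_j)$ -- indeed on $\rho(e_j)+\rho(y)$ for every torsion class $y$ of the same degree, so that neither $e_j$ nor $e_j$ plus torsion lies in the image of $\tau$. We use the mod~$2$ cohomology of $E_7$ together with its Steenrod action as recorded in \cite{mt}. The centerless group $E_7/\Gamma_2$ is then treated by naturality: one has $\Hfree{*}{E_7/\Gamma_2}\cong\Hfree{*}{E_7}$ by the isomorphism recalled at the start of Section~\ref{sec:Lie_groups}, and the double covering $E_7\to E_7/\Gamma_2$ pulls $e_3$ and $e_{15}$ back to the corresponding generators of $E_7$ modulo torsion, so the non-vanishing of the operations descends from $E_7$ to $E_7/\Gamma_2$.

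For $e_3$ the argument is the one already used for $G_2$, $F_4$ and $E_6$: the only non-zero class in $H^3(E_7;\Z/2)$ is the polynomial generator $x_3$, and $H^3(E_7;\Z)$ is torsion-free, so $\rho(e_3)=x_3$ and $\Sq^3x_3=x_3^2\neq 0$. (Alternatively, at $p=3$ one has $\rho(e_3)=x_3$ with $P^1x_3\neq 0$ and $\beta P^1x_3\neq 0$, so $Q_1$ is non-zero on $\rho(e_3)$ and Lemma~\ref{lem:Milnor_obstruction} applies.)

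The substance of the proposition is the case of $e_{15}$. One writes out the Bockstein complex of $H^*(E_7;\Z/2)$ in the degrees around $15$; the outcome is that $\rho(e_{15})$ equals the exterior generator $x_{15}$ modulo a summand in the subalgebra generated by $x_3,x_5,x_9$ (a multiple of $x_5^3+x_3^2x_9$) and modulo reductions of torsion classes, so that the $x_{15}$-component of $\rho(e_{15})$ is non-zero. One then applies the first odd-degree cohomology operation that survives as a differential of the Atiyah--Hirzebruch spectral sequence -- depending on the Steenrod action this may have to be larger than $\Sq^3$ -- and checks, from the explicit action in \cite{mt}, that it does not vanish on $x_{15}$, while its values on the decomposable and torsion summands of $\rho(e_{15})$ cannot cancel that contribution. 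What distinguishes $E_7$ from $F_4$, where the degree-$15$ generator causes no failure, is precisely that in $H^*(E_7;\Z/2)$ the generator $x_5$ is polynomial (so $x_5^3\neq 0$), unlike in $F_4$, and there is an extra generator $x_9$: these facts both alter $\rho(e_{15})$ and make the relevant Steenrod operation act non-trivially on $x_{15}$. A $p=3$ variant of this computation may also be available.

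The hard part is this last step. In contrast with degree~$3$, the group $H^{15}(E_7;\Z/2)$ contains several monomials and the Bockstein diagram in this range is genuinely intricate, so one must carefully track both the decomposable part of $\rho(e_{15})$ and the torsion ambiguity; the delicate point is not conceptual but the bookkeeping needed to pin down the correct odd operation and to confirm, against the full Steenrod action, that it does not vanish on $\rho(e_{15})$ or on $\rho(e_{15})$ plus torsion.
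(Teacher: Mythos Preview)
Your outline for $E_7$ itself is correct and matches the paper's approach: for $e_3$ the mod-$2$ argument $\Sq^3 x_3 = x_3^2\neq 0$ works (the paper prefers the $p=3$ Milnor operation you mention, for the reason below), and for $e_{15}$ the paper carries out exactly the Bockstein analysis you sketch, finding that $\rho(e_{15})\in\{x_{15}+x_3^2x_9,\ x_{15}+x_5^3\}$ and that already $\Sq^3$ suffices: $\Sq^3(\rho(e_{15}))=x_9^2\neq 0$.

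The gap is in your naturality reduction for $E_7/\Gamma_2$. Your claim that the covering $\pi\colon E_7\to E_7/\Gamma_2$ pulls $e_3$ back to the generator $e_3$ modulo torsion is false. In $H^\ast(E_7/\Gamma_2;\Z/2)\cong\Z/2[x_1,x_5,x_9]/(x_1^4,x_5^4,x_9^4)\otimes\Lambda(x_6,x_{15},\ldots)$ there is no class $x_3$; the only degree-$3$ class is $x_1^3$, so $\rho_2(e_3)=x_1^3$. Since $x_1$ classifies the double cover, $\pi^\ast(x_1)=0$, hence $\pi^\ast(\rho_2 e_3)=0$ and $\pi^\ast(e_3)$ is an \emph{even} multiple of $e_3$ in $H^3(E_7;\Z)\cong\Z$. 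Thus the $\Sq^3$ obstruction on $E_7$ says nothing about $\pi^\ast(e_3)$, and the naturality argument collapses at $p=2$. (Note also that the abstract isomorphism $\Hfree{\ast}{E_7}\cong\Hfree{\ast}{E_7/\Gamma_2}$ recalled at the start of Section~\ref{sec:Lie_groups} is not asserted to be induced by $\pi^\ast$.)

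This is precisely why the paper treats $e_3$ at $p=3$: since $|\Gamma_2|$ is prime to $3$, one has $H^\ast(E_7/\Gamma_2;\Z/3)\cong H^\ast(E_7;\Z/3)$, and the computation $Q_1(x_3)=-x_8\neq 0$ applies verbatim to both groups---your parenthetical ``alternative'' is in fact the argument that is needed. For $e_{15}$ the paper does not use naturality either; it records the presentation of $H^\ast(E_7/\Gamma_2;\Z/2)$ from \cite{bb}, draws the Bockstein diagram in degree $15$ separately (now $\rho(e_{15})\in\{x_{15}+x_6x_9,\ x_{15}+x_5^3\}$, with $x_6$ playing the role of $x_3^2$), and checks directly that $\Sq^3$ again yields $x_9^2\neq 0$. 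A naturality argument for $e_{15}$ at $p=2$ would in fact go through (here $\pi^\ast(\rho_2 e_{15})$ retains the $x_{15}$-term), but you would still have to verify that $\pi^\ast$ maps the relevant classes correctly, which is close to the direct computation anyway.
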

\begin{proof}
While there is significant $2$-torsion in the cohomology of $E_7$, 
it is more convenient to show the non-surjectivity of the Thom morphism in degree $3$ using $3$-torsion. 
The relevant cohomology ring is
\begin{align*}
    H^\ast(E_7;\Z/3) \cong\; &\Z/3[x_8]/(x_8^3) \otimes \Lambda(x_3,x_7,x_{11},x_{15},x_{19},x_{27},x_{35}), \\
    \text{ with } &P^1 x_3 = x_7,\; P^3 x_7 = x_{19},\; \beta x_7 = x_8.
\end{align*}
The reduction homomorphism $\rho \colon H^3(E_7;\Z) \rightarrow H^3(E_7;\Z/3)$ sends $e_3$ to $x_3$. 
Let $Q_1$ denote the first Milnor operation. 
We then have
\begin{align*}
    Q_1(x_3) = P^1 \beta (x_3) - \beta P^1 (x_3) = P^1(0) - \beta(x_7) = -x_8,
\end{align*}
and we conclude that $e_3$ is not in the image of $\tau$ by lemma \ref{lem:Milnor_obstruction}. 
Since dividing out by a subgroup of order $2$ changes neither the free cohomology nor the $\Z/3$-cohomology, we deduce that $e_3 \in H^3(E_7/\Gamma_2;\Z)$ is not in the image of $\tau$ either. 

To see that the generator $e_{15}$ is not in the image of $\tau$ we will use $2$-torsion. 
The $\Z/2$-cohomology and Steenrod operations of $E_7$ and $E_7/\Gamma_2$ are given by
\begin{align}\label{E7cohomology}
    H^\ast(E_7; \Z/2) \cong & \;\Z/2[x_3,x_5,x_9]/(x_3^{4},x_5^4,x_9^4) \otimes \Lambda(x_{15},x_{17},x_{23},x_{27}), \\
    H^\ast(E_7/\Gamma_2;\Z/2) \cong & \;\Z/2[x_1,x_5,x_9]/(x_1^{4},x_5^4,x_9^4) \otimes \Lambda(x_6,x_{15},x_{17},x_{23},x_{27}), \nonumber\\
    \text{ where } &\Sq^1 x_1 = x_1^2,\;\Sq^1 x_5=x_3^2,\; \Sq^1 x_9=x_5^2,\; \Sq^1 x_{15}=x_3^2x_5^2,\;  \nonumber \\
    &\Sq^1 x_{17}=x_9^2,\;\Sq^1 x_{23}=x_3^2x_9^2,\; \Sq^1 x_{27}=x_5^2x_9^2,\nonumber \\
    &\Sq^2 x_3=x_5,\; \Sq^2 x_{15}=x_{17},\nonumber \\
    & \Sq^4 x_5=x_9,\; \Sq^4 x_{23}=x_{27},\nonumber\\
    & \Sq^8 x_9=x_{17},\; \Sq^8 x_{15}=x_{23},\nonumber
\end{align}
with all other $\Sq^1, \Sq^2, \Sq^4, \Sq^8$ trivial and where $x_3^2$ is interpreted as $x_6$ in $H^\ast(E_7/\Gamma_2;\Z/2)$. In low degrees, we get the following Bockstein diagram for $E_7$:
\begin{equation*}
\begin{tikzcd}[cramped, column sep=tiny]
3 & 4 & 5 & 6 & 7 & 8 & 9 & 10 & 11 & 12 & 13 & 14 & 15 & 16\\
 x_3 & & & |[alias=32]| x_3^2 & & & |[alias=33]| x_3^3 & & &  & & &  \\
 & & x_5 \arrow[to=32] & & & x_3 x_5 \arrow[to=33] & & & x_3^2 x_5  & & & |[alias=3351]| x_3^3 x_5  \\
 & & & & & & & |[alias=52]| x_5^2 & & & |[alias=3152]| x_3 x_5^2 & & & |[alias=3252]| x_3^2 x_5^2 \\
 & & & & & & x_9 \arrow[to=52] & & & x_3 x_9 \arrow[to=3152] & & & |[alias=3291]| x_3^2 x_9 \arrow[to=3252] \\
 & & & & & & & & & & & & |[alias=53]| x_5^3 \arrow[to=3252] \\
 & & & & & & & & & & & |[alias=5191]| x_5 x_9 \arrow[to=53] \arrow[to=3291] \\
 & & & & & & & & & & & & x_{15} \arrow[to=3252]
 \end{tikzcd}
 \end{equation*}
 The generator $e_{15}\in H^{15}(E_7,\Z)$ reduces to $x_{15} + x_3^2 x_9$ or $x_{15} + x_5^3$ in $H^{15}(E_7;\Z_2)$, and we compute
 \begin{align*}
     \Sq^3(x_{15} + x_3^2 x_9) = \Sq^3(x_{15} + x_5^3) = x_9^2 \neq 0 
 \end{align*}
to see that $e_{15} \in H^{15}(E_7;\Z)$ is not in the image of the Thom morphism.

For $E_7/\Gamma_2$, we have the following Bockstein diagram:
\begin{equation*}
\begin{tikzcd}
 14 & 15 & 16 \\
 x_5 x_9 \arrow[to=53] \arrow[to=69] & x_1 x_5 x_9 \arrow[to=153] \arrow[to=1259] \arrow[to=169] & |[alias=1259]| x_1^2 x_5 x_9 \\
 & |[alias=53]| x_5^3 \arrow[to=526] & |[alias=153]| x_1 x_5^3 \\
 & |[alias=69]| x_6 x_9 \arrow[to=526] & |[alias=169]| x_1 x_6 x_9 \\
 & & |[alias=526]| x_5^2 x_6 \\
 & x_{15} \arrow[to=526] & x_1 x_{15} \\
 x_1^3 x_5 x_6 
 \end{tikzcd}
 \end{equation*}
 It follows that $e_{15}\in H^{15}(E_7/\Gamma_2;\Z)$ reduces to either $x_{15} + x_6 x_9$ or $x_{15} + x_5^3$ in $H^{15}(E_7/\Gamma_2;\Z/2)$. 
Since
\begin{align*}
\Sq^3(x_{15} + x_6 x_9) = \Sq^3(x_{15} + x_5^3) = x_9^2 \ne 0,   
\end{align*}
we conclude that $e_{15}\in H^{15}(E_7/\Gamma_2;\Z)$ and hence the assertion.   
\end{proof}


\subsubsection{The group \texorpdfstring{$E_8$}{E8}}

The free cohomology of the group $E_8$ is given by
\begin{align*}
\Hfree{\ast}{E_8} \cong \Lambda(e_3,e_{15},e_{23},e_{27},e_{35},e_{39},e_{47},e_{59}). 
\end{align*}

\begin{prop}\label{prop:E8}
The generators $e_3, e_{15}, e_{23}, e_{27} \in \Hfree{\ast}{E_8}$ as well as the sum of any of these generators with a torsion class in the same degree are not in the image of the Thom morphism. 
\end{prop}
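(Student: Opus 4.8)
The plan is to argue exactly as in the earlier propositions of this section, and in particular as in the proof of proposition \ref{prop:E7}. For each of the four generators we first determine its image under the reduction homomorphism $\rho$ using the Bockstein complex described in section \ref{sec:obstructions}, and we then exhibit either an odd-degree Steenrod operation or the first Milnor operation $Q_1$ that does not vanish on this image. By the discussion in sections \ref{sec:surjective_Thom} and \ref{sec:obstructions} together with lemma \ref{lem:Milnor_obstruction} this shows that the corresponding generator cannot lie in the image of $\tau$. The ``plus a torsion class'' part of the statement is dealt with as in proposition \ref{prop:E7} and in the groups of type $\mathfrak{b_n}$ and $\mathfrak{d_n}$: we enumerate all classes in $H^\ast(E_8;\Z/2)$ to which $e_k$ can reduce for the various splittings of $H^k(E_8;\Z)$ — equivalently, all classes obtained from the reduction of $e_k$ by adding the reduction of a torsion class of degree $k$ — and we check for each of them that the obstruction still yields a nonzero monomial that cannot be cancelled by applying the same operation to a torsion class in degree $k$. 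Since the center of $E_8$ is trivial, there are no further quotient groups to treat.

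For $e_3$ I would use the odd prime $p = 5$, where the mod $5$ cohomology is $H^\ast(E_8;\Z/5) \cong \Z/5[x_{12}]/(x_{12}^5) \otimes \Lambda(x_3,x_{11},x_{15},x_{23},x_{27},x_{35},x_{39},x_{47})$ with $P^1 x_3 = x_{11}$ and $\beta x_{11} = x_{12}$. Then $Q_1(\rho(e_3)) = Q_1(x_3) = P^1\beta(x_3) - \beta P^1(x_3) = -x_{12} \neq 0$, and lemma \ref{lem:Milnor_obstruction} applies; moreover $H^3(E_8;\Z) \cong \Z$ is torsion-free, so no torsion class needs to be added in this degree. (Alternatively one may argue at $p = 3$, as for $E_7$.)

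For $e_{15}$, $e_{23}$ and $e_{27}$ I would work at $p = 2$, using the mod $2$ cohomology of $E_8$ — a truncated polynomial algebra on $x_3,x_5,x_9,x_{15}$ tensored with an exterior algebra on $x_{17},x_{23},x_{27},x_{29}$ — together with the action of the Steenrod algebra (in particular relations such as $\Sq^2 x_3 = x_5$, $\Sq^4 x_5 = x_9$, $\Sq^8 x_9 = x_{17}$, $\Sq^2 x_{15} = x_{17}$, $\Sq^8 x_{15} = x_{23}$ and the relevant Bocksteins). Drawing the portions of the Bockstein diagram around degrees $15$, $23$ and $27$ identifies the Bockstein cohomology classes attached to $e_{15}$, $e_{23}$ and $e_{27}$, and hence the finitely many classes each of them can reduce to, all of the shape $x_k$ plus decomposables. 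For $e_{15}$ one then computes, just as for $E_7$, that $\Sq^3(\rho(e_{15})) = \Sq^1\Sq^2 x_{15} + (\text{corrections}) = x_9^2 + \cdots$ is nonzero, because $x_9$ has height $\geq 3$ in $H^\ast(E_8;\Z/2)$ and $x_9^2$ cannot occur in $\Sq^3$ of a degree-$15$ torsion class; for $e_{23}$ and $e_{27}$ one exhibits in the same manner an explicit nonzero monomial, surviving the truncation of the polynomial part, in the image of a suitable odd Steenrod operation such as $\Sq^3$ or $\Sq^7$.

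The main obstacle I expect is organisational rather than conceptual: $H^\ast(E_8;\Z/2)$ is a large ring, so the relevant Bockstein diagrams in degrees $15$ through $27$ are sizeable, and one must (i) enumerate correctly all possible reductions of each $e_k$, a step that is nontrivial because of the freedom in splitting off the $2$-torsion, and (ii) verify in every case both that the obstruction monomial survives the truncation and that it is not killed by the contribution of a torsion summand. No new idea beyond the techniques already used for proposition \ref{prop:E7} and for the groups of type $\mathfrak{b_n}$, $\mathfrak{d_n}$ should be necessary.
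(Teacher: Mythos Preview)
Your overall strategy---identify the mod-$p$ reductions via Bockstein cohomology and then hit them with an odd Steenrod operation or $Q_1$---is exactly the paper's, and your arguments for $e_3$ (at $p=5$ with $Q_1$) and for $e_{15}$ (at $p=2$ with $\Sq^3$, just as for $E_7$) both go through. Indeed $H^3(E_8;\Z)\cong\Z$, and for $e_{15}$ one checks that every possible mod-$2$ reduction is of the form $x_{15}+x_3^2x_9$, $x_{15}+x_5^3$, or one of these plus $x_3^5$, and in each case $\Sq^3$ produces $x_9^2$ (possibly plus $x_3^6$), which no degree-$15$ torsion class can cancel.

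The paper's execution differs in two respects. For $e_3$ it stays at $p=2$: since $x_3$ has height $16$ in $H^\ast(E_8;\Z/2)$, one simply has $\Sq^3 x_3=x_3^2\neq 0$. More interestingly, for the remaining generators the paper does \emph{not} treat each degree independently but runs a \emph{descending cascade}: it first handles $e_{27}$ directly via $\Sq^3 x_{27}=\Sq^1\Sq^2 x_{27}=\Sq^1 x_{29}=x_{15}^2$, then observes that $\Sq^4$ carries each possible reduction of $e_{23}$ to one of the already-listed reductions of $e_{27}$, and then that $\Sq^8$ carries each reduction of $e_{15}$ to one of the reductions of $e_{23}$. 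Thus the obstructing operations are $\Sq^3$, $\Sq^3\Sq^4$, and $\Sq^3\Sq^4\Sq^8$, and the verification in degrees $23$ and $15$ is reduced to checking a single even square rather than redoing the full torsion analysis.

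This organisational trick is worth keeping in mind for your approach: your suggestion of ``$\Sq^3$ or $\Sq^7$'' in degree $23$ is a little loose, since $\Sq^2 x_{23}=0$ forces $\Sq^3(x_{23}+x_3^2x_{17})=0$, so $\Sq^3$ alone fails on one of the reductions; you really do need $\Sq^7=\Sq^3\Sq^4$ (or an equivalent composite), which is exactly what the cascade produces automatically.
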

\begin{proof}
The $\Z/2$-cohomology of $E_8$ and the Steenrod operations are given by 
\begin{align}\label{E8cohomology}
    H^\ast(E_8; \Z/2) \cong & \;\Z/2[x_3,x_5,x_9,x_{15}]/(x_3^{16},x_5^8,x_9^4,x_{15}^4) \otimes \Lambda(x_{17},x_{23},x_{27},x_{29}), \\
    \text{ where } &\Sq^1 x_5=x_3^2,\; \Sq^1 x_9=x_5^2,\; \Sq^1 x_{15}=x_3^2x_5^2,\; \Sq^1 x_{17}=x_9^2,\nonumber \\
    &\Sq^1 x_{23}=x_3^2x_9^2,\; \Sq^1 x_{27}=x_5^2x_9^2,\; \Sq^1 x_{29}=x_{15}^2,\nonumber \\
    &\Sq^2 x_3=x_5,\; \Sq^2 x_{15}=x_{17},\; \Sq^2 x_{27}=x_{29},\nonumber \\
    & \Sq^4 x_5=x_9,\; \Sq^4 x_{23}=x_{27},\nonumber\\
    & \Sq^8 x_9=x_{17},\; \Sq^8 x_{15}=x_{23},\nonumber
\end{align}
with all other $\Sq^1, \Sq^2, \Sq^4, \Sq^8$ trivial.

In low degrees, we get the following Bockstein diagram for $p=2$:
\begin{equation*}
\begin{tikzcd}[cramped, column sep=tiny]
3 & 4 & 5 & 6 & 7 & 8 & 9 & 10 & 11 & 12 & 13 & 14 & 15 & 16\\
 x_3 & & & |[alias=32]| x_3^2 & & & |[alias=33]| x_3^3 & & & |[alias=34]| x_3^4 & & & |[alias=35]| x_3^5 \\
 & & x_5 \arrow[to=32] & & & x_3 x_5 \arrow[to=33] & & & x_3^2 x_5 \arrow[to=34] & & & |[alias=3351]| x_3^3 x_5 \arrow[to=35] \\
 & & & & & & & |[alias=52]| x_5^2 & & & |[alias=3152]| x_3 x_5^2 & & & |[alias=3252]| x_3^2 x_5^2 \\
 & & & & & & x_9 \arrow[to=52] & & & x_3 x_9 \arrow[to=3152] & & & |[alias=3291]| x_3^2 x_9 \arrow[to=3252] \\
 & & & & & & & & & & & & |[alias=53]| x_5^3 \arrow[to=3252] \\
 & & & & & & & & & & & |[alias=5191]| x_5 x_9 \arrow[to=53] \arrow[to=3291] \\
 & & & & & & & & & & & & x_{15} \arrow[to=3252]
 \end{tikzcd}
 \end{equation*}
 
 \begin{equation} \label{E8bockstein2}
 \adjustbox{scale=0.75}{
 \begin{tikzcd}[column sep=0.3em, row sep=small]
  17 & 18 & 19 & 20 & 21 & 22 & 23 & 24 & 25 & 26 & 27 & 28 \\
 & |[alias=36]| x_3^6 & & & |[alias=37]| x_3^7 & & & |[alias=38]| x_3^8& & &  |[alias=39]|x_3^9 \\
 x_3^4 x_5 \arrow[to=36] & & & x_3^5 x_5 \arrow[to=37] & & & x_3^6 x_5 \arrow[to=38] & & & x_3^7 x_5 \arrow[to=39] \\
  & |[alias=3391]| x_3^3 x_9 \arrow[to=3352] & & & |[alias=3491]| x_3^4 x_9 \arrow[to=3452] & & & |[alias=3591]| x_3^5 x_9 \arrow[to=3552] & & & |[alias=3691]| x_3^6 x_9 \arrow[to=3652] \\
 x_3 x_5 x_9 \arrow[to=3153] \arrow[to=3391] & & & x_3^2 x_5 x_9 \arrow[to=3253] \arrow[to=3491] & & & x_3^3 x_5 x_9 \arrow[to=3353] \arrow[to=3591] & & & x_3^4 x_5 x_9 \arrow[to=3453] \arrow[to=3691] \\
 & & |[alias=3352]| x_3^3 x_5^2 & & & |[alias=3452]| x_3^4 x_5^2 & & & |[alias=3552]| x_3^5 x_5^2 & & & |[alias=3652]| x_3^6 x_5^2 \\
 & |[alias=3153]| x_3 x_5^3 \arrow[to=3352] & & & |[alias=3253]| x_3^2 x_5^3 \arrow[to=3452] & & & |[alias=3353]| x_3^3 x_5^3 \arrow[to=3552] & & & |[alias=3453]| x_3^4 x_5^3 \arrow[to=3652] \\
 & x_3 x_{15} \arrow[to=3352] & & & |[alias=3215]| x_3^2 x_{15} \arrow[to=3452] & & & |[alias=3315]| x_3^3 x_{15} \arrow[to=3552] & & & |[alias=3415]| x_3^4 x_{15} \arrow[to=3652] \\
& & & x_5 x_{15} \arrow[to=3253] \arrow[to=3215] & & & x_3 x_5 x_{15} \arrow[to=3353] \arrow[to=3315] & & & x_3^2 x_5 x_{15} \arrow[to=3453] \arrow[to=3415] \\
& & & & & & & & |[alias=5215]| x_5^2 x_{15} \arrow[to=3254] & & & |[alias=315215]| x_3 x_5^2 x_{15} \\
& & & & & & & x_9 x_{15} \arrow[to=325291] \arrow[to=5215] & & & x_3 x_9 x_{15} \arrow[to=335291] \arrow[to=315215] \\
 & & & |[alias=54]| x_5^4 & & & |[alias=3154]| x_3 x_5^4 & & & |[alias=3254]| x_3^2 x_5^4 \\
 & & x_5^2 x_9 \arrow[to=54] & & & x_3 x_5^2 x_9 \arrow[to=3154] & & & |[alias=325291]| x_3^2 x_5^2 x_9 \arrow[to=3254] & & & |[alias=335291]| x_3^3 x_5^2 x_9\\
 & & & & & & & & |[alias=55]| x_5^5 \arrow[to=3254] & & & |[alias=3155]| x_3 x_5^5 \\
 & & & & & & & x_3^5 x_9 \arrow[to=55] \arrow[to=325291] & & & x_3 x_5^3 x_9 \arrow[to=3155] \arrow[to=335291] \\
& |[alias=92]| x_9^2 & & & |[alias=3192]| x_3 x_9^2 & & & |[alias=3292]| x_3^2 x_9^2 & & & |[alias=3392]| x_3^3 x_9^2 \\
 & & & & & & |[alias=5192]| x_5 x_9^2 \arrow[to=3292] & & & |[alias=315192]| x_3 x_5 x_9^2 \arrow[to=3392] \\
x_{17} \arrow[to=92] & & & x_3 x_{17} \arrow[to=3192] & & & |[alias=3217]| x_3^2 x_{17} \arrow[to=3292] & & & |[alias=3317]| x_3^3 x_{17} \arrow[to=3392] \\
& & & & & x_5 x_{17} \arrow[to=5192] \arrow[to=3217] & & & x_3 x_5 x_{17} \arrow[to=315192] \arrow[to=3317] & & & x_3^2 x_5 x_{17} \\
& & & & & & x_{23} \arrow[to=3292] & & & x_3 x_{23} \arrow[to=3392] \\
& & & & & & & & & & & |[alias=5292]| x_5^2 x_9^2 \\
& & & & & & & & & & |[alias=5217]| x_5^2 x_{17} \arrow[to=5292] \\
& & & & & & & & & & |[alias=93]| x_9^3 \arrow[to=5292] \\
& & & & & & & & & x_9 x_{17} \arrow[to=93] \arrow[to=5217] \\
& & & & & & & & & & & x_5 x_{23} \\
& & & & & & & & & & x_{27} \arrow[to=5292]
\end{tikzcd}
}
\end{equation}
The class $e_3\in \Hfree{3}{E_8}$ reduces to the class $x_3 \in H^3(E_8;\Z/2)$, for which $\Sq^3$ is non-trivial. For the other degrees, we start in degree $27$ and work our way down.

From diagram \eqref{E8bockstein2}, we see that under the reduction map $H^{27}(E_8;\Z) \rightarrow H^{27}(E_8;\Z/2)$, the class $e_{27}$ + (torsion) is sent to either
\begin{align} \label{E827reductions}
    x_{27} + x_5^2 x_{17} + L \quad \text{ or }\quad    x_{27} + x_9^3 + L,
\end{align}
where $L$ is some linear combination of the classes
\begin{align*}
    x_3^3 x_9^2, ~ x_3^9, ~ x_3^6 x_9 + x_3^4 x_5^3, ~  \text{and} ~ x_3^4 x_{15} + x_3^4 x_5^3.
\end{align*}
We observe that
\begin{align*}
    \Sq^3(x_{27}) = \Sq^1\Sq^2(x_{27}) = \Sq^1(x_{29})=x_{15}^2.
\end{align*}
Then, using \eqref{E8cohomology}, we see that none of the other terms in \eqref{E827reductions} can yield an $x_{15}^2$-term when applying $\Sq^3$. 
Thus, the term $x_{15}^2$ cannot get cancelled out, and it follows that $e_7$ and $e_7$ plus torsion are not in the image of the Thom morphism.

For the class $e_{23}$, we get four alternatives for its reduction to $\Z/2$-cohomology:
\begin{align}\label{E823mod2reductions}
x_{23} + x_5 x_9^2, ~ 
x_{23} + x_5 x_9^2 + x_3 x_5^4, ~ 
x_{23} + x_3^2 x_{17}, ~ \text{and} ~  
x_{23} + x_3^2 x_{17} + x_3 x_5^4. 
\end{align}
We evaluate $\Sq^4$ on each of these classes and get
\begin{align*}
    &\Sq^4(x_{23} + x_5 x_9^2) = x_{27} + x_9^3 \\
    &\Sq^4(x_{23} + x_5 x_9^2 + x_3 x_5^4) = x_{27} + x_9^3 + x_3^9 \nonumber \\
    &\Sq^4(x_{23} + x_3^2 x_{17}) = x_{27} + x_5^2 x_{17} \nonumber \\
    &\Sq^4(x_{23} + x_3^2 x_{17} + x_3 x_5^4) = x_{27} + x_5^2 x_{17} + x_3^9. \nonumber
\end{align*}
We see that each cohomology class in \eqref{E823mod2reductions} is mapped to a class in \eqref{E827reductions} by $\Sq^4$. 
It follows that $e_{23}$ reduces to a class for which $\Sq^1 \Sq^2 \Sq^4$ is nonzero. 
This shows that $e_{23}$ plus any torsion is not in the image of the Thom morphism.

Finally, there are four possibilities for the mod-$2$ reduction of $e_{15}$ plus torsion, given by 
\begin{align}\label{E815reductions}
x_{15} + x_3^2 x_9, ~ 
x_{15} + x_3^2 x_9 + x_3^5, ~ 
x_{15} + x_5^3, ~ \text{and} ~ 
x_{15} + x_5^3 + x_3^5. 
\end{align}

The operation $\Sq^8$ acts on these classes by 
\begin{align*}
    \Sq^8(x_{15} + x_3^2 x_9) &= x_{23} + x_3^2 x_{17} \\
    \Sq^8(x_{15} + x_3^2 x_9 + x_3^5) &= x_{23} + x_3^2 x_{17} + x_3 x_5^4 \nonumber \\
    \Sq^8(x_{15} + x_5^3) &= x_{23} + x_5 x_9^2 \nonumber \\
    \Sq^8(x_{15} + x_5^3 + x_3^5) &= x_{23} + x_5 x_9^2 + x_3 x_5^4. \nonumber
\end{align*}
Hence all classes in \eqref{E815reductions} have a nonzero image under $\Sq^1 \Sq^2 \Sq^4 \Sq^8$, proving that $e_{15}$ plus torsion is not in the image of the Thom morphism.
\end{proof}


\section{Geometric examples for special orthogonal groups}\label{sec:geometric_examples}

Recall from proposition \ref{prop:SOn_for_n_ge_5} that the generator $e_3 \in \Hfree{\ast}{SO(5)} \cong \Lambda(e_3,e_7)$ is not in the image of $\tau$. 
However, we will now show that the element $2e_3$ is in the image of the Thom morphism.   
We will prove this by geometrically constructing an element of $MU^3(SO(5))$ which is mapped to $2e_3 \in H^3(SO(5);\Z)$ under $\tau$. 

To do so we make use of the fact that $SO(5)$ is a $10$-dimensional compact manifold. 
Let $2\tilde{e}_3 \in H_7(SO(5);\Z)$ denote the image of $2e_3$ under the isomorphism $H^3(SO(5);\Z) \cong H_7(SO(5);\Z)$ defined by Poincar\'e duality. 
By \cite[Proposition 1.2]{quillen}, elements in $MU^3(SO(5))$ can be represented by proper complex-oriented maps of the form $M \to SO(5)$ where $M$ is a $7$-dimensional manifold. 
Thus, in order to show that $2e_3$ is in the image of $\tau$, it suffices to find a proper complex-oriented map $g \colon M \to SO(5)$ such that $g_*[M] = 2\tilde{e}_3$ where $[M]$ denotes the fundamental class of $M$ in $H_7(SO(5);\Z)$. 

We will therefore now compute the homology group $H_7(SO(5);\Z)$. 
We will do this using an explicit cell structure of $SO(5)$. 


\subsection{The cell structure of special orthogonal groups}\label{sec:homology_of_SOn}

We recall the cell structure of special orthogonal groups using maps from products of real projective spaces from \cite[Proposition 3D.1]{hatcher}.  
Let $v$ be a nonzero vector in $\R^n$. 
We define the linear transformation $r(v)\colon \R^{n} \rightarrow \R^{n}$ to be the reflection across the orthogonal complement of $v$. 
We may use this  map to define an embedding from $\RP^{k-1}$ to $SO(n)$ for $k \leq n$ as follows.   
Representing elements of $\RP^{k-1}$ by vectors in $\R^k$ and embedding into $\R^n$ in the canonical way if $k < n$, we define
\begin{align*}
 \RP^{k-1} & \longrightarrow SO(n) \\
[v] & \longmapsto r(v) \cdot r(e_1)
\end{align*}
where $e_1,\ldots,e_n$ are the standard basis vectors. 
We extend this to a map defined on products of real projective spaces by taking compositions, i.e., 
\begin{align*}
f_{i_1,\ldots,i_m} \colon \RP^{i_1}\times \ldots \times \RP^{i_m} &\longrightarrow SO(n) \\
([v_1],\ldots,[v_m]) &\longmapsto r(v_1) \cdot r(e_1) \cdots r(v_m) \cdot r(e_1).
\end{align*}
For $SO(n)$, there is a $k$-cell for each sequence $(i_1,\ldots,i_m)$ which satisfies both  $n > i_1 > \ldots > i_m > 0$ and $i_1 + \ldots + i_m = k$. 
The characteristic map is given by
\begin{equation*}
\begin{tikzcd}
D^k \arrow[r, "\cong"] & D^{i_1} \times \ldots \times D^{i_m} \arrow[r] & \RP^{i_1} \times \ldots \times \RP^{i_m} \arrow[r] & SO(n),
\end{tikzcd}
\end{equation*}
where the second map is the product of the characteristic maps for the top cells of each real projective space. 
There is a single $0$-cell, namely the identity of $SO(n)$.

This gives us all the information we need to construct the cellular chain complex of $SO(5)$. The differentials are determined by the differentials in the cellular chain complexes of real projective spaces, as well as the product formula 
\begin{align*}
d(e^i \times e^j) = d(e^i)\times e^j +(-1)^i e^i \times d(e^j).
\end{align*}
This provides a complete description of the cellular chain complex of $SO(5)$. 
In the following diagram each node is a cell, where for example $(2,1)$ is the cell given by the map $f_{2,1}\colon \RP^2 \times \RP^1 \rightarrow SO(5)$. 
The line segments indicate when a cell is  
contained in another cell. 

\begin{equation}\label{so5cw}
\begin{tikzpicture}[scale=0.8]


\node(0)at(0,0){};
\node(1)at(0,1.5){};
\node(2)at(0,3){};
\node(21)at(-1.2,4.5){};
\node(3)at(1.2,4.5){};
\node(31)at(-1.2,6){};
\node(4)at(1.2,6){};
\node(32)at(-1.2,7.5){};
\node(41)at(1.2,7.5){};
\node(321)at(-1.2,9){};
\node(42)at(1.2,9){};
\node(421)at(-1.2,10.5){};
\node(43)at(1.2,10.5){};
\node(431)at(0,12){};
\node(432)at(0,13.5){};
\node(4321)at(0,15){};


\draw[fill=black] (0)circle(0.08);
\draw[fill=black] (1)circle(0.08);
\draw[fill=black] (2)circle(0.08);
\draw[fill=black] (21)circle(0.08);
\draw[fill=black] (3)circle(0.08);
\draw[fill=black] (31)circle(0.08);
\draw[fill=black] (4)circle(0.08);
\draw[fill=black] (32)circle(0.08);
\draw[fill=black] (41)circle(0.08);
\draw[fill=black] (321)circle(0.08);
\draw[fill=black] (42)circle(0.08);
\draw[fill=black] (421)circle(0.08);
\draw[fill=red, draw=red] (43)circle(0.12);
\draw[fill=black] (431)circle(0.08);
\draw[fill=black] (432)circle(0.08);
\draw[fill=black] (4321)circle(0.08);


\node[xshift=-0.5cm] at (0) {$(0)$};
\node[xshift=-0.5cm] at (1) {$(1)$};
\node[xshift=-0.5cm] at (2) {$(2)$};
\node[xshift=-0.6cm] at (21) {$(2,1)$};
\node[xshift=0.5cm] at (3) {$(3)$};
\node[xshift=-0.6cm] at (31) {$(3,1)$};
\node[xshift=0.5cm] at (4) {$(4)$};
\node[xshift=-0.6cm] at (32) {$(3,2)$};
\node[xshift=0.6cm] at (41) {$(4,1)$};
\node[xshift=-0.8cm] at (321) {$(3,2,1)$};
\node[xshift=0.6cm] at (42) {$(4,2)$};
\node[xshift=-0.8cm] at (421) {$(4,2,1)$};
\node[xshift=0.6cm] at (43) {$(4,3)$};
\node[xshift=-0.8cm] at (431) {$(4,3,1)$};
\node[xshift=-0.8cm] at (432) {$(4,3,2)$};
\node[xshift=-1cm] at (4321) {$(4,3,2,1)$};


\draw (0) -- node[right]{$\mathbf{0}$} (1);
\draw (1) -- node[right]{$\mathbf{2}$} (2);
\draw (2) -- node[left]{$\mathbf{0}$} (21);
\draw (2) -- node[right]{$\mathbf{0}$} (3);
\draw (21) -- node[left]{$\mathbf{0}$} (31);
\draw (3) -- node[above]{$\mathbf{0}$} (31);
\draw (3) -- node[right]{$\mathbf{2}$} (4);
\draw (31) -- node[left]{$\mathbf{-2}$} (32);
\draw (31) -- node[below]{$\mathbf{2}$} (41);
\draw (4) -- node[right]{$\mathbf{0}$} (41);
\draw (32) -- node[left]{$\mathbf{0}$} (321);
\draw (32) -- node[below]{$\mathbf{2}$} (42);
\draw (41) -- node[right]{$\mathbf{2}$} (42);
\draw (321) -- node[left]{$\mathbf{2}$} (421);
\draw (42) -- node[above]{$\mathbf{0}$} (421);
\draw (42) -- node[right]{$\mathbf{0}$} (43);
\draw (421) -- node[left]{$\mathbf{0}$} (431);
\draw (43) -- node[right]{$\mathbf{0}$} (431);
\draw (431) -- node[right]{$\mathbf{-2}$} (432);
\draw (432) -- node[right]{$\mathbf{0}$} (4321);

\end{tikzpicture}
\end{equation}

\subsection{A geometric cobordism class on  \texorpdfstring{$SO(5)$}{SO5}}\label{sec:geometric_example_SO5}

From the structure of the complex displayed in \eqref{so5cw}, we see that $H_7(SO(5);\Z) \cong \Z$, where the generator comes from the cell $(4,3)$, coloured red in the diagram. 
However, since $\RP^4$ is not orientable, the map $f_{4,3}\colon \RP^4 \times \RP^3 \rightarrow SO(5)$ does not represent a bordism class. 
In fact, we know from the algebraic obstruction that the image of $e_3$ under $H_7(SO(5))$ cannot be hit by a bordism class on $SO(5)$. 
We will now show that we can replace $\RP^4 \times \RP^3$ with an orientable smooth manifold $M$ and the map $f_{4,3}$ by a smooth map $g \colon M \to SO(5)$ with the same image as $f_{4,3}$.

To do so, we first observe that the cell decomposition implies that every element of $SO(5)$ can be expressed as a composition of reflections in $\R^5$, where every pair of reflections leaves a $3$-dimensional subspace fixed and performs a rotation in the remaining $2$-plane. 
Let $\Gr{2}{5}$ be the Grassmann manifold of \emph{oriented} $2$-dimensional planes in $\R^5$. 
We will write elements of $\Gr{2}{5}$ in the form $(L,\sigma)$, where $L$ is a plane and $\sigma$ is an orientation of $L$. 
We then define
\begin{align*}
g \colon \Gr{2}{5} \times S^1 \longrightarrow SO(5)
\end{align*}
to be the map that sends $((L,\sigma),e^{it})$ to the element of $SO(5)$ which rotates the plane $L$ by the angle $t$ according to the orientation $\sigma$. 
More precisely, given a point $((L,\sigma),e^{it}) \in \Gr{2}{5} \times S^1$, let $r_{L,\sigma,t}$ be the rotation of $L$ by the angle $t$ along $\sigma$. 
Let $L^{\perp}$ denote the orthogonal complement of $L$ in $\R^5$ with the respect to the standard inner product. 
Then we can write $v \in \R^5$ in a unique way as $v=v_1+v_2$ such that $v_1 \in L$ and $v_2 \in L^{\perp}$. 
The transformation $g((L,\sigma),e^{it}) \in SO(5)$ is then defined by 
\begin{align*}
g((L,\sigma),e^{it})(v) = r_{L,\sigma,t}(v_1) + v_2. 
\end{align*}

The transformation $g((L,\sigma),e^{it}) \in SO(5)$ varies smoothly with $(L,\sigma)$ and $t$ in $\Gr{2}{5} \times S^1$. 

\begin{lemma}\label{lem:g_is_cobordism_class}
The map $g$ admits a complex orientation. 
In particular, $g$ is a proper complex-oriented smooth map and represents an element in $MU^3(SO(5))$.  
\end{lemma}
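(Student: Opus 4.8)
The plan is to verify Quillen's criterion \cite{quillen} directly. A complex orientation of $g$ consists of a closed embedding $\iota \colon \Gr{2}{5}\times S^1 \hookrightarrow SO(5)\times\R^{2N}$ lying over $SO(5)$, together with a complex vector bundle structure on the normal bundle $\nu_\iota$. Writing $M := \Gr{2}{5}\times S^1$, which is compact of dimension $6+1=7$, any Whitney embedding $j\colon M\hookrightarrow\R^{2N}$ combines with $g$ to give $\iota = (g,j)$, an embedding factoring $g$. The key simplification is that $SO(5)$ is a Lie group, so $T(SO(5))$ is canonically trivialised by left-invariant vector fields; hence $\iota^*T(SO(5)\times\R^{2N}) \cong g^*T(SO(5))\oplus\underline{\R}^{2N} \cong \underline{\R}^{2N+10}$, and therefore $\nu_\iota\oplus TM \cong \underline{\R}^{2N+10}$. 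Thus $\nu_\iota$ represents the stable normal bundle of $M$, and producing a complex orientation of $g$ reduces to equipping $TM$ with a stable complex structure, i.e. to showing that $M$ is stably almost complex.

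For the factor $\Gr{2}{5}$ I would use that the oriented real $2$-plane Grassmannian carries a natural complex structure. Sending an oriented plane with positively oriented orthonormal basis $(u,v)$ to the complex line $\C\cdot(u-iv)\subset\C^5$ is well defined, since rotating $(u,v)$ by an angle $\theta$ multiplies $u-iv$ by $e^{i\theta}$; and it identifies $\Gr{2}{5}$ diffeomorphically with the smooth complex quadric threefold $\{[z_0:\dots:z_4]\in\CP^4 : z_0^2+\dots+z_4^2 = 0\}$, because $\sum_j(u_j-iv_j)^2 = |u|^2-|v|^2-2i\langle u,v\rangle = 0$. In particular $\Gr{2}{5}$ is a complex projective manifold, so $T\Gr{2}{5}$ is the underlying real bundle of a complex vector bundle of complex rank $3$. (Alternatively, the tautological real $2$-plane bundle over $\Gr{2}{5}$ becomes a complex line bundle $\gamma$ via rotation by $90^\circ$ in the direction of the orientation, and $T\Gr{2}{5}\cong\Hom_\R(\gamma,\gamma^\perp)$ then inherits a complex structure.)

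For the factor $S^1$ I would use that, being a Lie group, it is stably parallelisable: $TS^1\oplus\underline{\R}\cong\underline{\R}^2$, which carries the standard complex structure of $\underline{\C}$. Since $TM \cong p_1^*T\Gr{2}{5}\oplus p_2^*TS^1$, we get $TM\oplus\underline{\R}\cong p_1^*T\Gr{2}{5}\oplus(p_2^*TS^1\oplus\underline{\R})$, a direct sum of underlying real bundles of complex vector bundles, so $TM$ is stably complex. Transporting this stable complex structure across $\nu_\iota\oplus TM\cong\underline{\R}^{2N+10}$ (adding copies of $\underline{\C}$ as needed to absorb the trivial summand) equips $\nu_\iota$ with a stable complex vector bundle structure, which is the desired complex orientation of $g$. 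Since $M$ is compact, $g$ is automatically proper, so $g$ is a proper complex-oriented smooth map and, by \cite[Proposition 1.2]{quillen}, defines a class $[g]\in MU^{\dim SO(5)-\dim M}(SO(5)) = MU^3(SO(5))$.

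The only subtle point is that the complex structure must be carried by the normal bundle of the \emph{map} $g$, not merely by $M$ in isolation; this is exactly what the canonical trivialisation of $T(SO(5))$ arranges, after which everything reduces to the two standard facts that oriented real $2$-plane Grassmannians are complex quadrics and that $S^1$ is stably parallelisable. The remaining bookkeeping of trivial summands, so that a stable complex structure on $TM$ yields an honest complex vector bundle structure on $\nu_\iota$ in Quillen's sense, is routine.
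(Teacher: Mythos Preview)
Your proof is correct and follows essentially the same approach as the paper: both reduce the complex orientation of $g$ to the three facts that $S^1$ is stably almost complex, $\Gr{2}{5}$ is almost complex, and $SO(5)$ is a compact Lie group (hence parallelisable). The only difference is cosmetic: you identify $\Gr{2}{5}$ with the complex quadric in $\CP^4$ to obtain the almost complex structure, while the paper invokes the homogeneous description $\Gr{2}{5}\cong SO(5)/(SO(2)\times SO(3))$; your version has the advantage of spelling out explicitly how Quillen's criterion is verified, which the paper leaves implicit.
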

\begin{proof}
The fact that $g$ admits a complex orientation follows from the facts that $S^1$ is stably almost complex, $\Gr{2}{5} \cong SO(5)/(SO(2)\times SO(3))$ is almost complex,  
and $SO(5)$ is a compact Lie group.  
\end{proof}

\begin{lemma}\label{lem:image_og_f_and_g_are_equal}
The images of the maps $f_{4,3}$ and $g$ in $SO(5)$ are equal, i.e., the image of the map $g\colon \Gr{2}{5} \times S^1  \to SO(5)$ is the cell $(4,3)$. 
\end{lemma}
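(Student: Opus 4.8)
The plan is to identify both $\mathrm{Im}(f_{4,3})$ and $\mathrm{Im}(g)$ with the subset
\[
S := \{A \in SO(5) : A = \mathrm{id} \text{ or } A \text{ fixes pointwise a } 3\text{-dimensional subspace of } \R^5\}
\]
of \emph{simple rotations} together with the identity. Note first that, since the characteristic maps $D^{i}\to\RP^{i}$ are surjective, the closed $(4,3)$-cell appearing in \eqref{so5cw} is precisely $\mathrm{Im}(f_{4,3})$, so it suffices to prove $\mathrm{Im}(f_{4,3}) = \mathrm{Im}(g) = S$.

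The equality $\mathrm{Im}(g) = S$ is essentially a matter of unwinding the definition of $g$: by construction $g((L,\sigma),e^{it})$ rotates the oriented plane $(L,\sigma)$ through the angle $t$ and acts as the identity on $L^{\perp}$, which is $3$-dimensional, so $\mathrm{Im}(g) \subseteq S$; conversely the identity equals $g((L,\sigma),1)$ for any $(L,\sigma)$, and every non-identity element of $S$ is a rotation through some $\theta \in (0,\pi]$ in a $2$-plane $L$, hence equals $g((L,\sigma),e^{i\theta})$ for a suitable orientation $\sigma$ of $L$.

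For $f_{4,3}$ I would first collapse the fourfold product of reflections. Using that a conjugate of a reflection by an orthogonal map is again a reflection, $Q\,r(v)\,Q^{-1} = r(Qv)$, together with $r(e_1)^{2} = \mathrm{id}$, one gets
\[
f_{4,3}([v_1],[v_2]) = r(v_1)\,r(e_1)\,r(v_2)\,r(e_1) = r(v_1)\,r\bigl(r(e_1)v_2\bigr).
\]
A product $r(a)r(b)$ of two reflections is the rotation through $2\angle(a,b)$ in $\mathrm{span}(a,b)$ and the identity on its orthogonal complement, which has dimension $\ge 3$, so $\mathrm{Im}(f_{4,3}) \subseteq S$. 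For the reverse inclusion let $A \in S$, $A \neq \mathrm{id}$, and write $A$ as a rotation through $\theta \in (0,\pi]$ in a $2$-plane $P$. Since $\dim P = 2$ and $\R^{4} := \mathrm{span}(e_1,\dots,e_4)$ has codimension $1$ in $\R^{5}$, the intersection $P \cap \R^{4}$ contains a line; pick a unit vector $w \in P \cap \R^{4}$ and then a unit vector $v_1 \in P$ with $\angle(v_1,w) = \theta/2$, on whichever side makes $r(v_1)r(w) = A$. Putting $v_2 := r(e_1)(w)$, which lies in $\R^{4}$ because $r(e_1)$ preserves $\R^{4}$, yields $A = r(v_1)\,r(r(e_1)v_2) = f_{4,3}([v_1],[v_2])$; and $\mathrm{id} = f_{4,3}([e_1],[e_1])$. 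Hence $\mathrm{Im}(f_{4,3}) = S = \mathrm{Im}(g)$.

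The one step requiring real care is this last inclusion $S \subseteq \mathrm{Im}(f_{4,3})$: because the two projective factors of $f_{4,3}$ are not symmetric — the second reflecting vector is forced to lie in $\R^{4}$ rather than all of $\R^{5}$ — one has to check that every simple rotation still occurs, and it is exactly the elementary fact $\dim(P \cap \R^{4}) \ge 1$ that makes this possible; after that, matching the angle and orientation inside $P$ is routine plane geometry.
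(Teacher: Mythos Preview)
Your proof is correct, and in fact cleaner than the paper's. Both arguments hinge on the same two facts---that a product of two hyperplane reflections is a simple rotation in the plane they span, and that any $2$-plane in $\R^5$ meets $\R^4$ in at least a line---but you organise them differently. The paper proves the two inclusions $\mathrm{Im}\,f_{4,3}\subseteq\mathrm{Im}\,g$ and $\mathrm{Im}\,g\subseteq\mathrm{Im}\,f_{4,3}$ directly: for the first it tracks the fixed $3$-spaces $M_u$, $M_v$ of the two factors $r(u)r(e_1)$ and $r(v)r(e_1)$ and intersects them case by case; for the second it picks $v'\in L\cap\R^4$, finds $u$ with $r(u)r(v')$ the desired rotation, and then recovers $v$ from the equation $r(v)r(e_1)=[r(v')r(e_1)]^{-1}$. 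Your conjugation identity $r(e_1)r(v_2)r(e_1)=r(r(e_1)v_2)$ collapses the fourfold product to a twofold one at the outset, which makes the identification of $\mathrm{Im}\,f_{4,3}$ with the set $S$ of simple rotations immediate and bypasses the case analysis and the inversion step entirely. The trade-off is that the paper's more explicit bookkeeping of preimages feeds directly into the local-degree computation in the proof of Theorem~\ref{thm:geomtetric_example_SO5}, whereas your argument, being more intrinsic, would require re-extracting that information if used there.
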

\begin{proof}
To simplify the notation we write $f=f_{4,3}$. 
We begin with showing that $\Imm f \subseteq \Imm g$. 
We let $(u,v)\in \RP^4 \times \RP^3$, and we will show that there exist two elements in $\Gr{2}{5}\times S^1$ which map to the element $f(u,v) \in SO(5)$. 
When showing this we will assume that $u$ and $v$ are both different from $\pm e_1$, since otherwise the argument is similar.

The $4$-planes $e_1^\perp$ and $u^\perp$ intersect on a $3$-dimensional subspace of $\R^5$ which remains fixed under the map $r(u) \cdot r(e_1)$. 
We will call this subspace $M_u$. 
Likewise, we let $M_v$ denote the $3$-dimensional subspace fixed by $r(v) \cdot r(e_1)$. 
One can observe that both $M_u$ and $M_v$ are contained in $\mathrm{Span}\{e_2,e_3,e_4,e_5\}$.

We first deal with the case where $u=v$. Then $M_u=M_v$, and it follows that $f(u,v)$ is a rotation in the plane $L = M_u^\perp$. 
For each of the two possible orientations of $L$, there is precisely one angle in $S^1$ which gives the rotation corresponding to $f(u,v)$, which shows that there are two elements of $\Gr{2}{5}\times S^1$ which map to $f(u,v)$.

On the other hand, if $u\neq v$, we get that $M_u \cap M_v$ is a $2$-dimensional subspace of $\mathrm{Span}\{e_2,e_3,e_4,e_5\}$. Let $N = (M_u \cap M_v)^\perp$. 
We observe that $f(u,v)\in SO(5)$ maps $N$ to $N$. Since $N$ is $3$-dimensional, a $1$-dimensional subspace of $N$ is left fixed by $f(u,v)$ (one way to see this is that every continuous map $S^2 \rightarrow S^2$ has a fixed point), and we call this line $T$. 
We have now seen that $f(u,v)$ leaves $T \oplus (M_u \cap M_v)$ fixed. 
The remaining $2$-dimensional subspace of $N$ is then our choice of $L$, in other words
\begin{align*}
    L := (T \oplus (M_u \cap M_v))^\perp.
\end{align*}
Having found the plane where the rotation takes place, we may combine orientations $\sigma$ and elements of $S^1$ as in the case $u=v$ to get the desired element of $SO(5)$. This proves that $\Imm f \subseteq \Imm g$.

We now show that $\Imm g \subseteq \Imm f$. 
Let 
\[
((L,\sigma),e^{it}) \in \Gr{2}{5}\times S^1.
\]
Our goal is to find vectors $u$ and $v$ such that $f(u,v) = g((L,\sigma),e^{it})$. 
We can first observe that $L \cap \R^4$ is at least $1$-dimensional. 
We then choose $v^\prime$ to be any unit vector in this intersection and note that $v^\prime$ represents a point in $\RP^3$. 
Next, we want to find a suitable $u \in \RP^4$ such that
\begin{align*}
    r(u) \cdot r(v^\prime) = g((L,\sigma),e^{it}).
\end{align*}
Clearly, $u$ must be in $L$, since then $L^\perp$ is fixed by both $r(u)$ and $r(v^\prime)$. 
Furthermore, the angle between $u$ and $v^\prime$ is uniquely determined by $t$ to yield the desired  rotation. 
(In fact, the angle must be $t/2$ or $t/2 + \pi$, depending on which representative we choose for the point in $\RP^4$.) 
With the exception of the cases $t=0$ and $t=\pi$, this leaves two options for $u$, which we choose between by making sure the rotation $r(u) \cdot r(v^\prime)$ goes in the right direction according to the orientation $\sigma$.
The composition $r(v^\prime)\cdot r(e_1)$ has a unique inverse, which is given by $r(v) \cdot r(e_1)$ for some vector $v$ in $\RP^3$.
We then have  
\begin{align*}
    f(u,v) & = r(u) \cdot r(e_1) \cdot r(v) \cdot r(e_1) \\
    & = r(u) \cdot r(e_1) \cdot [r(v^\prime) \cdot r(e_1)]^{-1} \\
    & = r(u) \cdot r(e_1) \cdot r(e_1) \cdot r(v^\prime) \\
    & = r(u) \cdot r(v^\prime) \\
    & = g((L,\sigma),e^{it}).
\end{align*}
This proves that $\Imm f = \Imm g$.
\end{proof}

We can now show the main result of this section. 

\begin{theorem}\label{thm:geomtetric_example_SO5}
The cobordism class represented by $g$ maps to $2e_3 \in H^3(SO(5);\Z)$ under the Thom morphism.
\end{theorem}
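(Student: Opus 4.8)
The plan is to show that $g_*[M] = \pm 2\,\tilde{e}_3$ in $H_7(SO(5);\Z) \cong \Z$, where $M := \Gr{2}{5}\times S^1$ and $\tilde{e}_3$ is the generator Poincar\'e dual to $e_3$; since the geometric description of the Thom morphism recalled in the introduction gives $\tau([g]) = \mathrm{PD}\bigl(g_*[M]\bigr)$, and since $e_3$ generates $H^3(SO(5);\Z) \cong \Z$, this yields $\tau([g]) = 2e_3$ once the orientation of $M$ is fixed appropriately. I first note that $M$ is a closed oriented $7$-manifold: $S^1$ is oriented, and $\Gr{2}{5}$ is orientable because the covering $\pi\colon \Gr{2}{5} \to \uGr{2}{5}$ is a local diffeomorphism, so that $w_1(T\Gr{2}{5}) = \pi^*w_1(T\uGr{2}{5}) = \pi^*\bigl(5\,w_1(\gamma)\bigr) = w_1(\pi^*\gamma) = 0$, the pulled-back tautological bundle being oriented.

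Next I would compute $g_*[M]$ as a signed count of preimages over a generic point. By Lemma \ref{lem:image_og_f_and_g_are_equal} the image of $g$ is the $7$-dimensional set $\overline{e_{(4,3)}}$ of all simple rotations of $\R^5$ (elements fixing a $3$-plane pointwise), so $g_*[M]$ is detected in the top cell. Let $W \subseteq \overline{e_{(4,3)}}$ be the open dense subset of simple rotations through an angle in $(0,\pi)$; its complement in $\overline{e_{(4,3)}}$, namely the identity, the half-turns, and the cells of dimension $\le 6$, has dimension $\le 6$. Each of the two components $\Gr{2}{5}\times(0,\pi)$ and $\Gr{2}{5}\times(\pi,2\pi)$ of $g^{-1}(W)$ is carried diffeomorphically onto $W$ by $g$, as one reads off from the proof of Lemma \ref{lem:image_og_f_and_g_are_equal}, so that $g\colon g^{-1}(W)\to W$ is a $2$-sheeted covering. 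Hence for a point $\gamma\in W\cap e_{(4,3)}$ in the open top cell we get $g^{-1}(\gamma)=\{p_1,p_2\}$ with $g$ a local diffeomorphism near each $p_i$, and the usual local formula for the homology class of a proper map gives $g_*[M] = (\varepsilon_{p_1}+\varepsilon_{p_2})\,\tilde{e}_3$, where $\varepsilon_{p_i}=\pm1$ compares the orientation of $M$ at $p_i$ with the orientation of $e_{(4,3)}$ corresponding to $\tilde{e}_3$.

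The decisive step, which I expect to be the main obstacle, is to prove $\varepsilon_{p_1} = \varepsilon_{p_2}$, so that $g_*[M] = \pm 2\,\tilde{e}_3 \neq 0$. The two preimages are interchanged by the free involution $\iota\colon M \to M$, $((L,\sigma),z) \mapsto ((L,-\sigma),\bar z)$, which satisfies $g\circ\iota = g$ since rotating $(L,-\sigma)$ through $-t$ is the same element of $SO(5)$ as rotating $(L,\sigma)$ through $t$. The key claim is that $\iota$ preserves the orientation of $M$: it is the product of the deck transformation of $\Gr{2}{5} \to \uGr{2}{5}$ with complex conjugation on $S^1$; complex conjugation reverses the orientation of $S^1$, and the deck transformation reverses the orientation of $\Gr{2}{5}$ because $\uGr{2}{5}$ is non-orientable --- indeed $w_1(T\uGr{2}{5}) = 5\,w_1(\gamma) = w_1(\gamma)\neq 0$ --- while $\Gr{2}{5}$ is orientable, so an orientation of $\Gr{2}{5}$ cannot be deck-invariant. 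Thus $\iota$ is an orientation-preserving diffeomorphism carrying a neighbourhood of $p_2$ to one of $p_1$, and $g = g\circ\iota$ then forces $\varepsilon_{p_2} = \varepsilon_{p_1}$; hence $g_*[M] = 2\varepsilon_{p_1}\,\tilde{e}_3$ and $\tau([g]) = \pm 2e_3$, which we normalise to $2e_3$.

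The point to stress is exactly this last orientation bookkeeping. A priori the two sheets of the double covering $g\colon g^{-1}(W)\to W$ could contribute with opposite signs, which would give $g_*[M]=0$ --- a possibility one cannot dismiss out of hand, since $e_3$ itself is \emph{not} in the image of $\tau$ by proposition \ref{prop:SOn_for_n_ge_5}. What rules it out is the conspiracy of the two orientation reversals, that of the deck transformation of $\Gr{2}{5}\to\uGr{2}{5}$ (equivalently, the non-orientability of the unoriented Grassmannian) and that of complex conjugation on $S^1$, whose composite $\iota$ is orientation-preserving and hence makes the two local contributions add rather than cancel.
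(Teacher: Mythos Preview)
Your proposal is correct and follows essentially the same approach as the paper: reduce via Poincar\'e duality to a local degree computation over a regular value, identify the two preimages, and show that the involution $\iota\colon((L,\sigma),e^{it})\mapsto((L,-\sigma),e^{-it})$ (which the paper calls $\flip$) is orientation-preserving because it is a product of two orientation-reversing maps, the non-orientability of $\uGr{2}{5}$ being the crucial ingredient. The only cosmetic differences are that the paper passes through the quotient $(4,3)/(4,3)_6\cong S^7$ to formalise the degree computation and chooses the half-turn $y$ (with angle $\pi$) as its regular value rather than a generic point with angle in $(0,\pi)$; your Stiefel--Whitney class computation for the (non-)orientability of the Grassmannians is a nice alternative to the paper's citation of a general covering-space result.
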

\begin{proof}
\begin{sloppypar}By Poincar\'e duality it suffices to show that the fundamental class of ${\Gr{2}{5}\times S^1}$ in homology is mapped to two times the generator $\tilde{e}_3\in H_7(SO(5);\Z)$.  
By lemma \ref{lem:image_og_f_and_g_are_equal}, 
the image of $g$ is the cell $(4,3)$. 
Hence we can consider $g$ as a map\end{sloppypar}
\begin{align*}
    \Gr{2}{5}\times S^1 \longrightarrow (4,3).
\end{align*}
Let
\begin{align*}
    q\colon (4,3) \longrightarrow (4,3)/(4,3)_6
\end{align*}
be the map that collapses the $6$-skeleton of the cell $(4,3)$.  
We then get the following commutative diagram in homology
\begin{equation*}
\begin{tikzcd}
    H_7(\Gr{2}{5} \times S^1;\Z) \arrow[r, "g_\ast"] \arrow[rd, "(q\circ g)_\ast"'] & H_7((4,3);\Z) \arrow[d, "\cong"', "q_\ast"] \\
     & H_7((4,3)/(4,3)_6;\Z).
\end{tikzcd}
\end{equation*}
Using the homology long exact sequence of the pair $((4,3)_6,(4,3))$, it is straight-forward to see that the map $q_\ast$ is an isomorphism. 
The quotient $(4,3)/(4,3)_6$ is homeomorphic to $S^7$. 
Hence by choosing an orientation we can assume that   $q\circ g$ is a map between compact, oriented topological manifolds. 
Proving that $g_\ast$ is a multiplication by $\pm 2$ is hence reduced to the claim that the map $(q \circ g)_*$ has degree $\pm 2$. 
We will show this claim by computing the local degree of $q \circ g$ at two points in $\Gr{2}{5}\times S^1$.

Let $y\in SO(5)$ be the point corresponding to the matrix  
\begin{align*}
    y = \begin{bmatrix}
    1 & 0 & 0 & 0 & 0 \\
    0 & 1 & 0 & 0 & 0 \\
    0 & 0 & 1 & 0 & 0 \\
    0 & 0 & 0 & -1 & 0 \\
    0 & 0 & 0 & 0 & -1 
    \end{bmatrix}_{\textstyle \raisebox{2pt}{.}}
\end{align*}
Then $y$ defines a rotation by the angle $\pi$ in the plane $\Span{e_4,e_5}$. 
If we set $L=\Span{e_4,e_5}$, and let $\pm\sigma$ be the two possible orientations of $L$, we get
\begin{align*}
    g((L,\sigma),e^{i\pi}) = g((L,-\sigma),e^{i\pi}) = y,
\end{align*}
and these are the only two points that are sent to $y$ under $g$. 
Since $\Imm g = \Imm f$ by lemma \ref{lem:image_og_f_and_g_are_equal}, it follows that $y\in (4,3)$. 
We now define open neighborhoods of the points $((L,\pm\sigma),e^{i\pi})$ that are mapped homeomorphically to an open neighborhood of $y$. 
Let
\begin{align*}
    \mathcal{U} = \{(M,\sigma)\in \Gr{2}{5} \mid M \cap \Span{e_1,e_2,e_3} = 0\}.
\end{align*}
Since $\mathcal{U}$ consists of two open path-components, so does the product
\begin{align*}
    \mathcal{U} \times (S^1\setminus\!\{e^0\}).
\end{align*}
We denote these two path-components of $\mathcal{U} \times (S^1\setminus\!\{e^0\})$ by $\mathcal{U}^+$ and $\mathcal{U}^-$. 
We then have $((L,\sigma),e^{i\pi})\in \mathcal{U}^+$ and $((L,-\sigma),e^{i\pi}) \in \mathcal{U}^-$. 
We let $\mathcal{V} \subset (4,3)/(4,3)_6$ be the interior of the cell $(4,3)$. 
We observe that $\mathcal{V}$ consists of the rotations of $\R^5$ that do not leave any nonzero vector in $\Span{e_4,e_5}$ fixed, which corresponds to the planes in $\mathcal{U}$. 
From the proof of lemma \ref{lem:image_og_f_and_g_are_equal} we deduce 
that $g$ maps precisely two points of $\mathcal{U} \times (S^1\setminus\!\{e^0\})$ to every point in $\mathcal{V}$. 
This implies that $g$ sends $\mathcal{U}^+$ and $\mathcal{U}^-$ homeomorphically to $\mathcal{V}$. 
Thus, at each of the points $((L,\pm\sigma),e^{i\pi})$, the map $q \circ g$ has local degree $+1$ or $-1$.

Now we observe that the points
\begin{align*}
    ((L,\sigma),e^{it}) \text{ and } ((L,-\sigma),e^{-it})
\end{align*}
have the same image in $SO(5)$ under $g$.  
The map $S^1 \to S^1$, $e^{it} \mapsto e^{-it}$, reverses the orientation. 
We recall that $\Gr{2}{5}$ is a double cover of the unoriented Grassmannian $\mathrm{Gr}_2(\R^5)$, and that $\mathrm{Gr}_2(\R^5)$ is not orientable. It then follows from \cite[Theorem 15.36]{Lee}
that the map 
\begin{align*}\label{secondmap}
    \Gr{2}{5} &\longrightarrow \Gr{2}{5} \\
(L,\sigma) &\longmapsto (L,-\sigma) \nonumber
\end{align*}
is not orientation-preserving, since this map is the only non-trivial automorphism of $\Gr{2}{5}$ compatible with the projection to $\mathrm{Gr}_2(\R^5)$. 
Hence, the map 
\begin{align*}
   \flip \colon  \Gr{2}{5} \times S^1 &\longrightarrow \Gr{2}{5} \times S^1 \\
    ((L,\sigma),e^{it}) &\longmapsto ((L,-\sigma),e^{-it})
\end{align*}
is a product of two maps which reverse the orientation. 
Thus, $\flip$ preserves the orientation. By the construction of $\flip$, the diagram
\begin{equation*}
\begin{tikzcd}[row sep=small]
    \mathcal{U}^+ \arrow[rd, "(q \circ g)_{\mid \mathcal{U}^+}"] \arrow[dd, leftrightarrow, "\flip"'] \\
    & \mathcal{V} \\
    \mathcal{U}^- \arrow[ur, "(q \circ g)_{\mid \mathcal{U}^-}"']
\end{tikzcd}    
\end{equation*}
commutes. 
Since $\flip$ preserves the orientation, we have that either both $(q \circ g)_{\mid \mathcal{U}^+}$ and $(q \circ g)_{\mid \mathcal{U}^-}$ preserve the orientation, or they both reverse it. 
Thus, $q \circ g$ has the same local degree at the points $((L,\pm\sigma),e^{i\pi})$, and we conclude that $g_\ast$ is a multiplication by $\pm 2$, which completes the proof.
\end{proof}

By lemma \ref{lem:detecting_nontrivial_elements_in _kernel}, theorem \ref{thm:geomtetric_example_SO5} implies the following result:

\begin{cor}\label{cor:geomtetric_example_SO5}
The class $[g] \otimes \frac{1}{2}$ is a non-trivial element in the kernel of 
\begin{align*}
\btrz \colon MU^3(SO(5)) \otimes_{MU^*} \rz \longrightarrow H^3(SO(5);\Z) \otimes_{\Z} \rz. 
\end{align*}
\end{cor}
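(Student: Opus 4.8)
The plan is to obtain the corollary as a direct application of lemma~\ref{lem:detecting_nontrivial_elements_in _kernel}, taking $X = SO(5)$, $k = 3$, $\alpha = e_3$, $n = 2$ and $\mu = [g]$. Theorem~\ref{thm:geomtetric_example_SO5} already supplies the key input: the cobordism class $[g] \in MU^3(SO(5))$ satisfies $\tau([g]) = 2e_3$, so the image of $\tau \colon MU^3(SO(5)) \to H^3(SO(5);\Z)$ contains $n\alpha = 2e_3$ with $n = 2 > 1$. Moreover $e_3$ is a non-torsion class, being a generator of the rank-one free part of $H^3(SO(5);\Z)$, and by proposition~\ref{prop:SOn_for_n_ge_5} the class $e_3$ itself is not in the image of $\tau$.

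The one point that still needs to be checked before quoting lemma~\ref{lem:detecting_nontrivial_elements_in _kernel} is that no class of the form $e_3 + y$ with $2y = 0$ lies in the image of $\tau$ either. For this I would first argue that $H^3(SO(5);\Z)$ is torsion-free. Using the description $H^\ast(SO(5);\Z/2) \cong \Z/2[u_1,u_3]/(u_1^8,u_3^2)$ from the proof of proposition~\ref{prop:SOn_for_n_ge_5}, one has $\dim_{\Z/2} H^i(SO(5);\Z/2) = 1$ for $i = 0,1,2$; feeding this into the universal coefficient sequences $0 \to H^i(SO(5);\Z)\otimes\Z/2 \to H^i(SO(5);\Z/2) \to \Tor(H^{i+1}(SO(5);\Z),\Z/2) \to 0$ inductively forces the $2$-torsion of $H^3(SO(5);\Z)$ to vanish. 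Since all torsion in $H^\ast(SO(n);\Z)$ is $2$-torsion, it follows that $H^3(SO(5);\Z) \cong \Z$ is generated by $e_3$; in particular the only $y$ with $2y = 0$ is $y = 0$, so this hypothesis is vacuous once we know that $e_3 \notin \Imm\tau$.

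With every hypothesis of lemma~\ref{lem:detecting_nontrivial_elements_in _kernel} verified, the lemma immediately gives that $\mu \otimes \tfrac{1}{2} = [g] \otimes \tfrac{1}{2} \in MU^3(SO(5)) \otimes_{MU^\ast} \rz$ is a non-trivial element of the kernel of $\btrz$, which is exactly the statement of the corollary. I do not expect any genuine obstacle here: the geometric and homological work has already been carried out in theorem~\ref{thm:geomtetric_example_SO5}, and the only bookkeeping required is the (classical) vanishing of the torsion of $H^3(SO(5);\Z)$, which also explains why the multiple in theorem~\ref{thm:geomtetric_example_SO5} could not have been improved to hit $e_3$ on the nose.
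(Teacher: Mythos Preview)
Your proposal is correct and follows exactly the paper's approach: the paper deduces the corollary in one line from lemma~\ref{lem:detecting_nontrivial_elements_in _kernel} and theorem~\ref{thm:geomtetric_example_SO5}. You additionally spell out why the ``$e_3 + y$ with $2y=0$'' hypothesis of the lemma is vacuous (namely $H^3(SO(5);\Z)\cong\Z$ is torsion-free), a detail the paper leaves implicit; this is a welcome clarification and your universal-coefficient argument for it is fine.
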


In the next section we explain how the method to prove theorem \ref{thm:geomtetric_example_SO5} can be generalised to special orthogonal groups of higher dimensions. 


\subsection{Generalization to higher dimensions}\label{sec:generalize_to_higher_SOn}

We first show which cells provide the generators for cohomology groups of $SO(n)$ and then make a generalised geometric construction.

Let $k \geq 0$ 
and $n \geq 2k+3$. 
Then there is a non-torsion generator $e_{4k+3}$ in $H^{4k+3}(SO(n);\Z)$. 
Using the cell structure of $SO(n)$, we can now determine a small part of the cellular chain complex of $SO(n)$. 
We use the following notation. 
Let $(i_1,\ldots,i_m)$ be a sequence 
of integers with $n-1\geq i_1 >i_2>\ldots>i_m\geq 1$.  
We let $(\widehat{i_1},\ldots,\widehat{i_m})$ denote the image of the map
\begin{align*}
    f_{j_1,\ldots,j_s}\colon \R P^{j_1}\times \ldots \times \R P^{j_s} \longrightarrow SO(n),
\end{align*}
where $(j_1,\ldots,j_s)$ is the sequence obtained by removing the numbers $i_1,\ldots,i_m$ from the sequence $(n-1,n-2,\ldots,1)$.
We then have the diagram

\begin{center}
\begin{tikzpicture}

\node(31)at(-1.2,0){};
\node(21-1)at(1.2,0){};
\node(21)at(0,1.5){};
\node(20)at(0,3){};


\draw[fill=black] (31)circle(0.08);
\draw[fill=black] (21-1)circle(0.08);
\draw[fill=red, draw=red] (21)circle(0.12);
\draw[fill=black] (20)circle(0.08);


\node[xshift=1.6cm] at (21-1) {$(\widehat{2k+2},\widehat{2k+1},\widehat{1})$,};
\node[xshift=-1.4cm] at (31) {$(\widehat{2k+3},\widehat{2k+1})$};
\node[xshift=1.6cm] at (21) {$(\widehat{2k+2},\widehat{2k+1})$};
\node[xshift=1.3cm] at (20) {$(\widehat{2k+2},\widehat{2k})$};


\draw (31) -- node[left]{$\mathbf{0}$} (21);
\draw (21-1) -- node[right]{$\mathbf{0}$} (21);
\draw (21) -- node[right]{$\mathbf{0}$} (20);

\end{tikzpicture}    
\end{center}
from which we can see that $e_{4k+3}$ comes from the cell $(\widehat{2k+2},\widehat{2k+1})$.

Using the methods in section \ref{subsec:Thom_for_SOn} we can show that for every $k$, the generator $e^{4k+3}$ is not in the image of the Thom morphism for sufficiently large $n$. 
Determining a minimal such $n$ is more difficult, and we have been unable to find a more efficient method than to study the Bockstein diagrams on a case by case basis. 
However, we will now show how a multiple of $e_{4k+3}$ can always be constructed geometrically, whether or not $e_{4k+3}$ itself is in the image of the Thom morphism.

Let $n \geq 3$ be odd. We can then define the map
\begin{align*}
    g_n\colon \Gr{2}{n} \times S^1 \longrightarrow SO(n)
\end{align*}
in the same way as the map $g\colon \Gr{2}{5}\times S^1 \rightarrow SO(5)$ in section \ref{sec:geometric_example_SO5}. 
For $m>n$, we will also denote by $g_n$ the composition $\Gr{2}{n} \times S^1 \to SO(n) \into SO(m)$ with the canonical embedding of $SO(n)$ into $SO(m)$. 
\begin{lemma}\label{lem:gn_is_cobordism_class}
For every $n \ge 3$ odd and every $m \ge n$, the map $g_n\colon \Gr{2}{n} \times S^1 \to SO(m)$ admits a complex orientation. 
In particular, $g_n$ is a proper complex-oriented smooth map and represents an element in $MU^*(SO(m))$.  
\end{lemma}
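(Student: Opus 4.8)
The plan is to run the proof of lemma \ref{lem:g_is_cobordism_class} in essentially the same way, noting that the only properties of $SO(5)$ and $\Gr{2}{5}$ used there were that the target is a compact Lie group and that the source $\Gr{2}{5}\times S^1$ is a product of a stably almost complex manifold with $S^1$. Concretely, I would first recall that, in Quillen's sense \cite{quillen}, a proper smooth map $h\colon M \to X$ admits a complex orientation precisely when its stable normal bundle $\nu_h$, which is stably isomorphic to $h^*TX - TM$, carries a stably almost complex structure, and that such structures are inherited under pullback, direct sum, and passage to a stable inverse.

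Next I would assemble the three inputs. The target $SO(m)$ is a compact Lie group, hence parallelizable, so $g_n^*T(SO(m))$ is stably trivial; this is also why the statement is uniform in $m$, since pre-composing with the Lie group embedding $SO(n)\into SO(m)$ does not change the fact that the target is parallelizable. The circle factor contributes only the trivial bundle $TS^1$. And the oriented Grassmannian $\Gr{2}{n}$ is almost complex: it is diffeomorphic to the complex quadric hypersurface $\{\sum z_j^2=0\}\subset\CP^{n-1}$ via the map sending the oriented plane with oriented orthonormal basis $(u,v)$ to $[u+iv]$, or equivalently $\Gr{2}{n}\cong SO(n)/(SO(2)\times SO(n-2))$ is the generalised flag manifold attached to the centraliser of a circle in $SO(n)$, hence a complex homogeneous space; in particular $T\Gr{2}{n}$ carries a genuine complex structure. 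Combining these, $\nu_{g_n}$ is stably isomorphic to $-T\Gr{2}{n}-TS^1$, hence to $-T\Gr{2}{n}$, which admits a complex structure coming from the one on $T\Gr{2}{n}$. This gives the desired complex orientation of $g_n$.

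Finally, $g_n$ is smooth by the same construction as for $g$ in section \ref{sec:geometric_example_SO5}, and it is proper because $\Gr{2}{n}\times S^1$ is compact. Hence $g_n$ is a proper complex-oriented smooth map, and by \cite[Proposition 1.2]{quillen} it represents a class in $MU^*(SO(m))$, namely in degree $\binom{m}{2}-(2n-3)$, the codimension of the map.

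I do not expect a genuine obstacle here: the content is bookkeeping about stable complex structures. The one point worth a sentence of care is the uniformity in $m$, i.e.\ that enlarging the target along a Lie group embedding keeps it parallelizable and so leaves the (stable) normal bundle, and its complex structure, undisturbed; and, if one prefers to avoid the quadric identification, that the almost complex structure on $SO(n)/(SO(2)\times SO(n-2))$ can instead be produced from its description as a generalised flag manifold.
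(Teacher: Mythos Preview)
Your proof is correct and follows essentially the same approach as the paper: the paper's proof is a one-line reference back to lemma \ref{lem:g_is_cobordism_class}, citing exactly the three facts you use, namely that $S^1$ is stably almost complex, $\Gr{2}{n}$ is almost complex, and $SO(m)$ is a compact Lie group. You simply make explicit the bookkeeping with the stable normal bundle and the uniformity in $m$, which the paper leaves implicit.
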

\begin{proof}
This follows again from the facts that $S^1$ is stably almost complex, $\Gr{2}{n}$ is almost complex,  
and $SO(m)$ is a compact Lie group.  
\end{proof}

We define the map 
\begin{align*}
   \flip_n \colon  \Gr{2}{n} \times S^1 &\longrightarrow \Gr{2}{n} \times S^1 \\
    ((L,\sigma),e^{it}) &\longmapsto ((L,-\sigma),e^{-it}).
\end{align*}


\begin{lemma}\label{lem:generalised_lemma_image_g_and_f}
The maps $\tau_n$, $g_n$ and $f_{i_1,\ldots,i_m}$ have the properties
\begin{enumerate}
    \item[\rm{(i)}] $\Imm g_n = \Imm f_{n-1,n-2}$
    \item[\rm{(ii)}] $g_n \circ \flip_n = g_n$
    \item[\rm{(iii)}] $\flip_n$ is orientation-preserving.
\end{enumerate}
\end{lemma}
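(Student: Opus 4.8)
The plan is to verify the three assertions in turn, in each case reducing to the argument already given for $SO(5)$ in Lemma \ref{lem:image_og_f_and_g_are_equal} and in the proof of Theorem \ref{thm:geomtetric_example_SO5}. Since the images in question all lie inside $SO(n)\subseteq SO(m)$ regardless of $m$, it suffices to treat the case $m=n$. I expect only part (i) to require genuine work.

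For (i), the starting point is the conjugation identity $r(e_1)\,r(v_2)\,r(e_1)=r(r(e_1)v_2)$, which gives
\begin{align*}
f_{n-1,n-2}([v_1],[v_2]) &= r(v_1)\cdot r(e_1)\cdot r(v_2)\cdot r(e_1) \\
&= r(v_1)\cdot r(v_2'), \qquad v_2' := r(e_1)v_2 \in \R^{n-1}.
\end{align*}
Thus $f_{n-1,n-2}([v_1],[v_2])$ is exactly the rotation of $\R^n$ acting in the plane $\Span{v_1,v_2'}$ and fixing its orthogonal complement, while by construction every $g_n((L,\sigma),e^{it})$ is likewise a rotation of $\R^n$ in a $2$-plane fixing the complementary codimension-$2$ subspace. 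Hence both maps have image equal to the set of all such rotations, which already gives the equality; to make it fully explicit I would reproduce the back-and-forth of Lemma \ref{lem:image_og_f_and_g_are_equal}. The fixed subspaces $M_{v_1}=v_1^\perp\cap e_1^\perp$ and $M_{v_2'}=(v_2')^\perp\cap e_1^\perp$ are now $(n-2)$-dimensional subspaces of the $(n-1)$-dimensional space $e_1^\perp$ rather than $3$-dimensional, but when they differ their intersection is still $(n-3)$-dimensional and $N:=(M_{v_1}\cap M_{v_2'})^\perp$ is still $3$-dimensional, so the fixed-point argument inside $N$ applies verbatim; and in the reverse direction one uses that $L\cap\R^{n-1}\neq 0$, which holds because $\dim L+\dim\R^{n-1}=n+1>n$. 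The side cases $v_1,v_2'\in\{\pm e_1\}$ are dealt with as for $SO(5)$.

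Statement (ii) is immediate from the definition of $g_n$: rotating the plane $L$ by the angle $-t$ with respect to the orientation $-\sigma$ is the same linear automorphism of $\R^n$ as rotating $L$ by $t$ with respect to $\sigma$, and both fix $L^\perp$ pointwise. This is the observation already used for $SO(5)$ in the proof of Theorem \ref{thm:geomtetric_example_SO5}. For (iii), I would argue as there that $\flip_n$ is the composite of two orientation-reversing homeomorphisms: the map $e^{it}\mapsto e^{-it}$ of $S^1$ and the map $(L,\sigma)\mapsto(L,-\sigma)$ of $\Gr{2}{n}$, hence it preserves orientation. The latter reverses orientation because it is the unique non-trivial deck transformation of the double cover $\Gr{2}{n}\to\mathrm{Gr}_2(\R^n)$, and $\mathrm{Gr}_2(\R^n)$ is non-orientable precisely because $n$ is odd, so that $\Gr{2}{n}$ is its orientation double cover; by \cite[Theorem 15.36]{Lee} the deck transformation of an orientation double cover reverses orientation.

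The main obstacle is contained in (i): as for $SO(5)$, one must handle the degenerate configurations — when $M_{v_1}=M_{v_2'}$, or $v_1\parallel v_2'$ — in which $\Span{v_1,v_2'}$ is \emph{not} the plane in which $r(v_1)r(v_2')$ rotates, by identifying the correct $2$-dimensional rotation plane (inside $N$, via the fixed-point argument) and fitting $\sigma$ and $t$ to it. Once this is carried out exactly as in Lemma \ref{lem:image_og_f_and_g_are_equal}, parts (ii) and (iii) are bookkeeping already done for $SO(5)$.
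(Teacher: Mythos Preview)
Your proposal is correct and follows exactly the approach the paper intends: the paper's own proof is the single sentence ``This follows from similar arguments as in the proofs of lemma~\ref{lem:image_og_f_and_g_are_equal} and theorem~\ref{thm:geomtetric_example_SO5},'' and you have spelled out precisely those arguments with the dimension adjustments (the $(n-2)$-dimensional $M$'s, the still $3$-dimensional $N$, the intersection $L\cap\R^{n-1}$) and the oddness of $n$ needed for non-orientability of $\mathrm{Gr}_2(\R^n)$. One minor inconsistency: after rewriting $f_{n-1,n-2}$ via conjugation as $r(v_1)r(v_2')$, the fixed subspaces you should be intersecting are $v_1^\perp$ and $(v_2')^\perp$, not their intersections with $e_1^\perp$; either drop the conjugation and follow the paper's $M_u,M_v$ literally, or drop the $\cap\,e_1^\perp$ after conjugating.
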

\begin{proof}
This follows from similar arguments as in the proofs of lemma \ref{lem:image_og_f_and_g_are_equal} and theorem \ref{thm:geomtetric_example_SO5}.  
\end{proof}

We can now construct the cobordism class which maps to a multiple of $e_{4k+3}\in H^{4k+3}(SO(m))$. 
The construction depends on whether $m$ is even or odd, and we start with $m = 2n+1$ odd. 
For $k$ and $m$ with $2k+1 < m$, let $i$ denote the canonical embedding $SO(2k+1) \rightarrow SO(m)$. 
We can now define the map
\begin{align}\label{oddmap}
    h_{2n+1,k} := g_{2n+1}\times g_{2n-1}\times\cdots \times g_{2k+5}\times i &\colon \\
    \prod_{l=k+2}^n \left( \Gr{2} {2l+1} \times S^1 \right) \times SO(2k+1) \longrightarrow &SO(2n+1). \nonumber
\end{align}
It follows from lemma \ref{lem:generalised_lemma_image_g_and_f} that the image of this map is the cell
\begin{align*}
    (2n,2n-1,\ldots,2k+3,2k,\ldots,2,1) = (\widehat{2k+2},\widehat{2k+1}).
\end{align*}

If $m = 2n$ is even, then we need to define one more map. 
Given the map $f_k\colon \RP^k \rightarrow SO(m)$, let $f_k^\prime$ be the composite map
\begin{equation*}
\begin{tikzcd}
    S^k \arrow[r] & \RP^k \arrow[r, "f_k"] & SO(m),
\end{tikzcd}
\end{equation*}
where the first map is the canonical double cover. We can then define
\begin{align}\label{evenmap}
    h_{2n,k}:=f_{2n-1}^\prime \times g_{2n-1}\times g_{2n-3}\times\cdots \times g_{2k+5}\times i &\colon \\
    S^{2n-1} \times\prod_{l=k+2}^{n-1} \left( \Gr{2} {2l+1} \times S^1 \right) \times SO(2k+1) \longrightarrow &SO(2n). \nonumber
\end{align}
Again, it follows from lemma \ref{lem:generalised_lemma_image_g_and_f} that the image of this map is the cell
\begin{align*}
    (2n-1,2n-2,\ldots,2k+3,2k,\ldots,2,1) = (\widehat{2k+2},\widehat{2k+1}).
\end{align*}

\begin{theorem}\label{thm:geometric_example_SOn}
Let $k \geq 0$ and $n \geq k+1$. 
Then the Thom morphism sends the cobordism class represented by the map $h_{2n+1,k}$ in $MU^{4k+3}(SO(2n+1))$ to $2^{n-k-1}e_{4k+3} \in H^{4k+3}(SO(2n+1);\Z)$. 
If 
$n \geq k+2$, then the Thom morphism sends the cobordism class represented by the map $h_{2n,k}$ in $MU^{4k+3}(SO(2n))$ to $2^{n-k-1}e_{4k+3}\in H^{4k+3}(SO(2n);\Z)$.
\end{theorem}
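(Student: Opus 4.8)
The plan is to follow the blueprint of the proof of Theorem~\ref{thm:geomtetric_example_SO5}: first reduce, via Poincar\'e duality and collapsing a skeleton, to a mapping-degree computation, and then evaluate that degree by counting signed preimages of a generic point, using an orientation-preserving symmetry group that acts simply transitively on the fibre. Write $M$ for the domain of $h_{m,k}$, so that
\[
M=\begin{cases}\textstyle\prod_{l=k+2}^{n}\bigl(\Gr{2}{2l+1}\times S^1\bigr)\times SO(2k+1), & m=2n+1,\\ S^{2n-1}\times\textstyle\prod_{l=k+2}^{n-1}\bigl(\Gr{2}{2l+1}\times S^1\bigr)\times SO(2k+1), & m=2n.\end{cases}
\]
By Lemma~\ref{lem:gn_is_cobordism_class} (and since the oriented Grassmannians $\Gr{2}{2l+1}$ are almost complex, hence orientable) $M$ is a closed orientable manifold, on which I fix an orientation; a dimension count gives $\dim M=d:=\dim SO(m)-(4k+3)$. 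By Lemma~\ref{lem:generalised_lemma_image_g_and_f}(i) together with the computations following \eqref{oddmap} and \eqref{evenmap}, the image of $h_{m,k}$ is the subcomplex $C:=(\widehat{2k+2},\widehat{2k+1})$, which has a single top cell, of dimension $d$. As in Theorem~\ref{thm:geomtetric_example_SO5}, $H_d(C;\Z)\cong\Z$ is generated by the cellular cycle of that top cell (there are no $(d+1)$-cells in $C$, and its boundary vanishes by the portion of the cellular chain complex computed in Section~\ref{sec:generalize_to_higher_SOn}), and this generator maps to $\pm\widetilde e_{4k+3}$ under $H_d(C;\Z)\to H_d(SO(m);\Z)$. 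Collapsing the $(d-1)$-skeleton gives $q\colon C\to C/C_{d-1}\cong S^d$ with $q_\ast$ an isomorphism on $H_d$ (long exact sequence of $(C_{d-1},C)$). Hence, factoring $h_{m,k}$ through $C$ and fixing an orientation on $S^d$, Poincar\'e duality reduces the theorem to the claim that $q\circ h_{m,k}\colon M\to S^d$ has degree $\pm 2^{n-k-1}$.

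Next I would pick $y$ in the interior of the cell $C$ generic enough to be a regular value of $q\circ h_{m,k}$ and to describe $(h_{m,k})^{-1}(y)$ through factorizations. The interiors of the relevant cells support a unique-factorization description of their elements (this is what the proof of Lemma~\ref{lem:generalised_lemma_image_g_and_f} sets up, extending Lemma~\ref{lem:image_og_f_and_g_are_equal}); combining this with the facts that each $g_{2l+1}$ is two-to-one over the interior of $\Imm f_{2l,2l-1}$ (the $SO(2l+1)$ case of Lemma~\ref{lem:image_og_f_and_g_are_equal}), that $i\colon SO(2k+1)\hookrightarrow SO(m)$ is injective, and --- in the even case --- that $f_{2n-1}^\prime\colon S^{2n-1}\to\Imm f_{2n-1}$ is two-to-one via the double cover $S^{2n-1}\to\RP^{2n-1}$, one finds that $(h_{m,k})^{-1}(y)$ has exactly $2^{n-k-1}$ points, at each of which $h_{m,k}$ is a local diffeomorphism onto the interior of $C$, so that $q\circ h_{m,k}$ has local degree $\pm 1$ there.

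It then remains to check that these signs all agree. Let $G$ be the group generated by the involutions $\flip_l$ acting on the $\Gr{2}{2l+1}\times S^1$-factor (for $l=k+2,\dots,n$ when $m=2n+1$; for $l=k+2,\dots,n-1$ when $m=2n$) and, when $m=2n$, by the antipodal map $a$ on $S^{2n-1}$, each extended by the identity on the remaining factors. Then $G\cong(\Z/2)^{n-k-1}$, and $G$ acts on $M$ by orientation-preserving diffeomorphisms (each $\flip_l$ is orientation-preserving by Lemma~\ref{lem:generalised_lemma_image_g_and_f}(iii), and $a$ is orientation-preserving since $2n-1$ is odd), while $h_{m,k}\circ\gamma=h_{m,k}$ for every $\gamma\in G$ by Lemma~\ref{lem:generalised_lemma_image_g_and_f}(ii) (and $f_{2n-1}^\prime\circ a=f_{2n-1}^\prime$). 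Since each generator toggles exactly one of the binary choices in the factorization, $G$ acts simply transitively on the $2^{n-k-1}$-element fibre $(h_{m,k})^{-1}(y)$; and for $\gamma\in G$ orientation-preserving with $h_{m,k}\circ\gamma=h_{m,k}$, comparing Jacobian determinants at $p$ and $\gamma(p)$ shows $q\circ h_{m,k}$ has the same local degree at both points. Hence all $2^{n-k-1}$ local degrees coincide, $\deg(q\circ h_{m,k})=\pm 2^{n-k-1}$, and absorbing the sign into the choice of orientation of $S^d$ (equivalently, of the generator $e_{4k+3}$) yields $\tau([h_{m,k}])=2^{n-k-1}e_{4k+3}$.

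\textbf{Main obstacle.} The delicate point is the second step: one must use not merely that the \emph{image} of $h_{m,k}$ is the cell $C$ (Lemma~\ref{lem:generalised_lemma_image_g_and_f}(i)), but the sharper statement that a generic element of $C$ factors uniquely as a product of generic elements of the cells $\Imm f_{2l,2l-1}\subset SO(2l+1)$ and of $SO(2k+1)$ (resp.\ of $\Imm f_{2n-1}$), so that the fibre of $h_{m,k}$ is counted with the correct multiplicity $2^{n-k-1}$ and $h_{m,k}$ is a local diffeomorphism at each preimage. That is the content that Lemma~\ref{lem:generalised_lemma_image_g_and_f} and Lemma~\ref{lem:image_og_f_and_g_are_equal} must be invoked to supply in this stronger, multiplicity-counting form; everything else is a routine extension of the $SO(5)$ computation.
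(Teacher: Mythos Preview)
Your proposal is correct and follows essentially the same approach as the paper: the paper's own proof is a single sentence stating that the assertion follows as in the $SO(5)$ case from the fact that each factor $\Gr{2}{2l+1}\times S^1$ is wrapped twice around the cell $(2l,2l-1)$. You have spelled out exactly this argument in detail, including the group-action step that shows all local degrees agree and the careful treatment of the even case (where the antipodal map on $S^{2n-1}$ supplies the extra factor of~$2$ and is orientation-preserving since $2n-1$ is odd); the ``main obstacle'' you flag---unique factorization on the interior of the top cell---is precisely the content implicit in the paper's CW description and in Lemma~\ref{lem:generalised_lemma_image_g_and_f}.
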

\begin{proof}
The assertion follows as in the proof of theorem \ref{thm:geomtetric_example_SO5} from the fact that each factor $\Gr{2}{2l+1}\times S^1$ is wrapped twice around the cell $(2l,2l-1)$. 
\end{proof}

\begin{cor}\label{cor:geomtetric_example_SOn}
For every $k \ge 0$, there is a sufficiently large integer $m$ such that the class $[h_{m,k}] \otimes \frac{1}{2^{n-k-1}}$ with $n = \lfloor \frac{m}{2} \rfloor$ is a non-trivial element in the kernel of 
\begin{align*}
\btrz \colon MU^*(SO(m)) \otimes_{MU^*} \rz \longrightarrow H^*(SO(m);\Z) \otimes_{\Z} \rz.  
\end{align*}
\end{cor}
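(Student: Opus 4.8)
The plan is to deduce Corollary~\ref{cor:geomtetric_example_SOn} from Theorem~\ref{thm:geometric_example_SOn} by an application of Lemma~\ref{lem:detecting_nontrivial_elements_in _kernel}. Fix $k\ge 0$; for an integer $m$ write $n:=\lfloor m/2\rfloor$ and $N:=2^{n-k-1}$. By Theorem~\ref{thm:geometric_example_SOn} the cobordism class $[h_{m,k}]\in MU^{4k+3}(SO(m))$ satisfies $\tau([h_{m,k}])=N\,e_{4k+3}$, where $e_{4k+3}\in H^{4k+3}(SO(m);\Z)$ is the non-torsion class fixed in Section~\ref{subsec:Thom_for_SOn}. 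It therefore suffices to choose $m$ large enough that the hypotheses of Lemma~\ref{lem:detecting_nontrivial_elements_in _kernel} hold for $X=SO(m)$, $\alpha=e_{4k+3}$ and $\mu=[h_{m,k}]$, with $N$ playing the role of the integer $n>1$ there; the conclusion of the lemma is then exactly that $[h_{m,k}]\otimes\tfrac1N$ is a non-trivial element of the kernel of $\btrz$.

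Two conditions must be arranged by enlarging $m$. First, we need $N>1$, i.e.\ $n\ge k+2$: for even $m$ this is precisely the hypothesis of Theorem~\ref{thm:geometric_example_SOn}, and for odd $m$ it means $m\ge 2k+5$. Second --- the substantive point --- neither $e_{4k+3}$ nor any class of the form $e_{4k+3}+y$ with $y$ torsion may lie in the image of $\tau$. Since all torsion in $H^\ast(SO(m);\Z)$ has exponent $2$, the set $\{\,y:Ny=0\,\}$ is the full torsion subgroup of $H^{4k+3}(SO(m);\Z)$ once $N$ is even, so this is exactly the condition appearing in Lemma~\ref{lem:detecting_nontrivial_elements_in _kernel}. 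To verify it we use the Steenrod-operation method of Section~\ref{subsec:Thom_for_SOn}: from the Bockstein complex of $H^\ast(SO(m);\Z/2)$, which by \eqref{SOmod2} is a truncated polynomial algebra on the $u_i$, one identifies the mod $2$ reduction $\rho(e_{4k+3})$ together with the reductions of the finitely many torsion classes in degree $4k+3$, and one exhibits an admissible $\Sq^I$ of odd degree $\ge 3$ with $\Sq^I(\rho(e_{4k+3}))\ne 0$ whose value cannot be cancelled by $\Sq^I$ applied to any torsion contribution. Then $\Sq^I(\rho(e_{4k+3}+y))=\Sq^I(\rho(e_{4k+3}))\ne 0$ for every torsion $y$, so by the discussion of Section~\ref{sec:obstructions} none of these classes lies in the image of $\tau$. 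As recalled in the remark after Proposition~\ref{prop:SOn_for_n_ge_5} and in Section~\ref{sec:generalize_to_higher_SOn}, such an obstruction exists for all sufficiently large $m$, and since the statement only asks for the existence of some suitable $m$ --- not the minimal one --- a coarse bound is enough.

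With such an $m$ fixed, the remaining verification is immediate: $e_{4k+3}$ is non-torsion, $N\,e_{4k+3}=\tau([h_{m,k}])$ lies in the image of $\tau$ with $N>1$, and by the previous paragraph no element $e_{4k+3}+y$ with $Ny=0$ lies in the image of $\tau$. Lemma~\ref{lem:detecting_nontrivial_elements_in _kernel} then gives that
\[
[h_{m,k}]\otimes\tfrac{1}{2^{n-k-1}}\in MU^\ast(SO(m))\otimes_{MU^\ast}\rz
\]
is a non-trivial element of the kernel of $\btrz$, which is the assertion.

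The step I expect to be the main obstacle is the second half of the middle paragraph: showing that the Steenrod obstruction detecting $e_{4k+3}$ is insensitive to adding torsion classes, i.e.\ ruling out that $e_{4k+3}+y$ lies in the image of $\tau$ for some torsion $y$ even though $e_{4k+3}$ does not. For $k=0$ this issue is vacuous, because $H^3(SO(m);\Z)$ is torsion-free for $m\ge 5$ --- which is why Corollary~\ref{cor:geomtetric_example_SO5} needed no such discussion. For general $k$ it reduces to the Bockstein-diagram bookkeeping that Section~\ref{sec:generalize_to_higher_SOn} already notes must be done case by case; the saving grace is that, as formulated, the corollary only requires the existence of a sufficiently large $m$, so no uniform or optimal bound on $m$ is needed.
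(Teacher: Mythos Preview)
Your proposal is correct and follows the same approach the paper intends: the corollary is stated immediately after Theorem~\ref{thm:geometric_example_SOn} with no separate proof, the implicit argument being to combine that theorem with the assertion in Section~\ref{sec:generalize_to_higher_SOn} that $e_{4k+3}$ fails to be in the image of $\tau$ for large $m$, and then invoke Lemma~\ref{lem:detecting_nontrivial_elements_in _kernel}. You are in fact more careful than the paper here: you correctly isolate the ``plus torsion'' hypothesis of Lemma~\ref{lem:detecting_nontrivial_elements_in _kernel} as the nontrivial point, note that for $SO(m)$ all torsion is elementary $2$-torsion so the relevant $y$'s are exactly the torsion classes, and observe that ruling them out requires the same kind of Bockstein/Steenrod bookkeeping the paper carries out explicitly in other cases (e.g.\ Proposition~\ref{prop:E8}) but only asserts here. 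That honesty about where the work lies is appropriate, and since the corollary asks only for \emph{some} sufficiently large $m$, your appeal to the case-by-case analysis is adequate.
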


\begin{remark}
There is a notable case where the construction of map \eqref{evenmap} can be simplified. 
For $SO(8)$, the factor $S^7$ can be replaced by $\RP^7$, since this space is parallelisable. 
Thus, the map
\begin{align*}
    f_7 \times g_7 \times g_5 \colon \; \RP^7 \times \left( \Gr{2}{7}\times S^1 \right) \times \left( \Gr{2}{5}\times S^1 \right) \longrightarrow SO(8)
\end{align*}
represents an element of $MU^3(SO(8))$ which is mapped to $4e_{3}\in H^3(SO(8);\Z)$.
\end{remark}

%
%
%
%
\bibliographystyle{amsalpha}

\begin{thebibliography}{999999}

\bibitem{adams} J.\,F.\,Adams, \emph{Stable homotopy and generalised homology}, Chicago Lectures in Mathematics. University of Chicago Press, 1974. 


\bibitem{ah} M.\,Atiyah, F.\,Hirzebruch, Analytic cycles on complex manifolds, Topology 1 (1962), 25--45.

\bibitem{bb} P.\,F.\,Baum, W.\,Browder, The cohomology of quotients of classical groups, Topology 3 (1965), 305--336.

\bibitem{BenoistOttem} O.\,Benoist, J.C.\,Ottem, Two coniveau filtrations, Duke Math.\,J.\,170 (2021), 2719--2753. 



\bibitem{cartan} H.\,Cartan, Sur les groupes d'Eilenberg-MacLane. II, Proc.\,Nat.\,Acad.\,Sci.\,U.S.A.\,40 (1954), 704--707.

\bibitem{cartanSem} H.\,Cartan, S\'eminaire Henri Cartan de l'Ecole Normale Sup\'erieure, 1954/1955. Alg\`ebres d'Eilenberg-MacLane et homotopie, 
Secr\'etariat Math., Paris, 1955.




\bibitem{gradysatiAHSS} D.\,Grady, H.\,Sati, Spectral sequences in smooth generalized cohomology, Algebr. Geom. Topol. 17 (2017), no.\,4, 2357--2412. 


\bibitem{hatcher} A.\,Hatcher, \emph{Algebraic topology}, Cambridge University Press, Cambridge, 2002.

\bibitem{hausquick} K.\,B.\,Haus, G.\,Quick, Geometric pushforward in Hodge filtered complex cobordism and secondary invariants, preprint, arXiv:2303.15899.

\bibitem{hodgkin} L.\,Hodgkin, On the K-theory of Lie groups, Topology 6 (1967), 1--36. 

\bibitem{hfc} M.\,J.\,Hopkins, G.\,Quick, Hodge filtered complex bordism, J.\,Topology 8 (2015), 147--183. 

\bibitem{hs} M.\,J.\,Hopkins, I.\,M.\,Singer, Quadratic functions in geometry, topology, and $M$-theory, J. Differential Geom.\,70 (2005), 329--452.

\bibitem{hmns} J.\,Hunton, M.\,Mimura, T.\,Nishimoto, B.\,Schuster, Higher $v_n$ torsion in Lie groups, J. Math.\,Soc.\,Japan 50 (1998), no.\,4, 801--818.








\bibitem{Lee} J.\,M.\,Lee, \emph{Introduction of smooth manifolds}, Second edition, Springer, New York, 2013.

\bibitem{mt} M.\,Mimura, H.\,Toda, Topology of Lie groups. I and II, Translations of Mathematical Monographs, 91, American Mathematical Society, Providence, RI, 1991.

\bibitem{mt2} R.\,E.\,Mosher, M.\,C.\,Tangora, \emph{Cohomology operations and applications in homotopy theory}, Harper \& Row, Publishers, New York--London, 1968.

\bibitem{nishimoto} T.\,Nishimoto, Higher torsion in the Morava K-theory of SO(m) and Spin(m), J. Math. Soc. Japan 53 (2001), no.\,2, 383--394.


\bibitem{procesi} C.\, Procesi, \emph{Lie Groups. An approach through invariants and representations}, Springer, New York, 2007.


\bibitem{quillen} D.\,G.\,Quillen, Some elementary proofs of some results of complex cobordism theory using Steenrod operations, Adv.\,Math. 7 (1971), 29--56.




\bibitem{tamanoi00} H.\,Tamanoi, Spectra of $BP$-linear relations, $v_n$-series, and $BP$-cohomology of Eilenberg--Mac Lane spaces, Trans.\,Amer.\,Math.\,Soc.\,352 (2000), no.\,11, 5139--5178.

\bibitem{thom} R.\,Thom, Quelques propri\'et\'es globales des vari\'et\'es diff\'rentiables, Comment.\,Math.\,Helv. 28 (1954), 17--86. 

\bibitem{totaro} B.\,Totaro, Torsion algebraic cycles and complex cobordism, J.\,Amer.\,Math.\,Soc.\,10 (1997), no. 2, 467--493.

\bibitem{wilson} W.\,S.\,Wilson, The $\Omega$-spectrum for Brown--Peterson cohomology, part II, Am.\,J.\,Math. 97 (1975), 101--123.

\bibitem{yagita1} N.\,Yagita, Brown--Peterson cohomology groups of exceptional Lie groups, J.\,Pure Appl.\,Algebra 17 (1980), no.\,2, 223--226. 

\bibitem{yagita2} N.\,Yagita, On mod odd prime Brown--Peterson cohomology groups of exceptional Lie groups., J.\,Math.\,Soc.\,Japan 34 (1982), no.\,2, 293--305.

\bibitem{yagita3} N.\,Yagita, On relations between Brown--Peterson cohomology and the ordinary mod p cohomology theory, 
Kodai Math.\,J.\,7 (1984), no.\,2, 273--285.


\end{thebibliography}

\end{document}